\newtheorem{theorem}{Theorem}[section]
\newtheorem{lemma}[theorem]{Lemma}
\newtheorem*{lemma*}{Lemma}
\newtheorem{proposition}[theorem]{Proposition}
\newtheorem{corollary}[theorem]{Corollary}
\theoremstyle{definition}
\newtheorem{definition}[theorem]{Definition}
\newtheorem{problem}[theorem]{Problem}
\theoremstyle{remark}
\newtheorem{remark}[theorem]{Remark}
\numberwithin{equation}{section}
\newcommand{\abs}[1]{\lvert#1\rvert}
\newcommand{\A}{\mathbb{A}}
\newcommand{\C}{\mathbb{C}}
\newcommand{\W}{\mathscr{W}}
\newcommand{\DD}{\mathbb{D}}
\newcommand{\IK}{\mathbb{K}}
\newcommand{\R}{\mathbb{R}}
\newcommand{\X}{\mathbb{X}}
\newcommand{\Y}{\mathbb{Y}}
\newcommand{\dtext}{\textnormal d}
\newcommand{\onto}{\xrightarrow[]{{}_{\!\!\textnormal{onto\,\,}\!\!}}}
\newcommand{\into}{\xrightarrow[]{{}_{\!\!\textnormal{into\,\,}\!\!}}}
\newcommand{\bydef}{\stackrel{\textnormal{def}}{=\!\!=}}
\DeclareMathOperator{\im}{Im}
\def\le{\leqslant}
\def\ge{\geqslant}
\begin{document}

\title{Energy-minimal  Principles in Geometric Function Theory.}
\maketitle

\begin{center}\textbf{Tadeusz Iwaniec,  Gaven Martin, Jani Onninen}\end{center}

\begin{center}In  memory of Sir Vaughan Frederick Randal Jones \end{center}

\begin{abstract}  

We survey a number of recent developments in geometric analysis as they pertain to the calculus of variations and extremal problems in geometric function theory following the NZMRI lectures given by the first author at those workshops in Napier in 1998 and 2005.
 
 \end{abstract}

\section{Introduction}

This article is a reflection of the authors' research program to advance variational techniques in Geometric Function Theory with many and varied applications in mind.  On one hand to things like nonlinear materials science, the calculus of variations, nonlinear analysis and PDEs,  and on the other to Teichm\"uller theory and Riemann surfaces. While making a few new observations,  the material is largely expository in nature and presents a number of recent advances from the NZMRI lectures given by the first author at those workshops in Napier in 1998 (Geometric Analysis) and 2005 (Interactions between Geometry and Algebra).  Indeed it was during that second conference that we (with Kari Astala - also in attendence) began \cite{AIMO} which set up the main initial problems in the area,  identified the connections between minimisers of mean distortion and harmonic mappings and the Nitsche phenomenon,  \cite{AIM},  which ultimately lead to the resolution of the Nitsche Conjecture from 1962, \cite{IKOni}.  Generalisations were discussed in \cite{IMO2}.

\medskip

The modern theory seeks an in-depth analysis of deformations which minimise naturally occurring energy functionals in geometric analysis and solves the associated Euler-Largrange (and other related) equations.  Crucially, we confine ourselves not just to examples,  but to use them to gain insights and new points of view to uncover new phenomena -- such as the Nitsche phenomena,  which unexpectedly identified conformal invariants as obstructions to the existence of nice minimisers for even the Dirichlet energy, see (\ref{1}) below.  Thus the main objects of our discussion are mappings
 \begin{equation}\label{TheMappings}\, h : \mathbb X \onto \mathbb Y\, \end{equation}
\noindent between given spaces of the same topological type; the map $h$ is typically, an orientation preserving homeomorphism and referred to as elastic deformation in materials science. Although $\,\mathbb X\,$ and $\,\mathbb Y\,$ could be general Riemannian $\,n\,$-manifolds (with or without boundaries),  we shall largely focus on the case they are Euclidean domains $\, \mathbb X , \mathbb Y \subset \,\mathbb R^n\,$. Our standing assumption is that $\,h\,$ belongs to some Sobolev space  $\,\mathscr W^{1,p}_{\textnormal{loc}}(\mathbb X, \mathbb R^n)\,$; where normally $\,1 <  p < \infty\,$. The case $\,p = 1\,$ will often  be treated as marginal due to the lack of reflexibility of  $\,\mathscr W^{1,1}(\mathbb X, \mathbb R^n)\,$; reflexibility is a fundamental prerequisite for the existence of energy-minimal mappings (usually called \textit{hyperelastic deformations})  via existence approaches based in the direct method of the calculus of variations.  Typically a studied problem will require a'priori bounds in a natural Sobolev space.  For instance in the classical case of determining the hyperelastic deformation of the energy functional
\begin{equation}\label{1} h \mapsto \int_{\mathbb X} | Dh(x) |^2 d\sigma_{\X}(x) \end{equation}
subject to any constraints,  one must seek minimisers in $\,\mathscr W^{1,2}_{\textnormal{}}(\mathbb X,\mathbb Y)$.  Here 
\begin{equation}
| Dh(x) |^2 = \sum_{i,j} \left| \frac{\partial h^j}{\partial x_k} \right|^2,  \quad x=(x_1,\ldots,x_N)\in\X, \quad h(x) = (h_1(x),\ldots,h_N(x)) \in \Y
\end{equation}
is the Hilbert-Schmidt norm.
\bigskip

Sobolev mappings between Riemannian manifolds can be defined in several ways, that are not necessary equivalent. We may, and do, assume that $\,\mathbb Y \,$ is a subset (not necessarily a subdomain) of $\,\mathbb R^N\,$ for some sufficiently large dimension $\,N\,$.  This follows from the celebrated embedding theorem of J. Nash~\cite{Nash}.  This being so, we say that $\,h :  \mathbb X  \rightarrow  \mathbb Y \subset \mathbb R^N\,$ belongs to $\,\mathscr W^{1,p}(\mathbb X , \mathbb Y)\,$ if each of its $\,N\,$ coordinate functions lies in the linear  space $\,\mathscr W^{1, p } ( \mathbb X, \mathbb R )\,$ and $\,h(x)  \in \mathbb Y\,$ for every  $\,x \in \mathbb X\,$; here there is a standard way of defining Sobolev scalar functions on a manifold. However, caution must be exercised because the topology of the target space $\,\mathbb Y\,$ (later referred to as the \textit{deformed configuration}) may prevent smooth mappings from being dense in  $\,\mathscr W^{1,p}(\mathbb X , \mathbb Y)\,$, \cite{HL2}.  

The following problem, still open,  gives a glimpse of the difficulties arising already at the very basic stages concerning Sobolev homeomorphisms.
\begin{problem}
Let $\mathbb Y \subset \R^n$ be a bounded topological ball (so homeomorphic  to the unit ball $\,\mathbb B\,$). For what $p$ (any ?) does there exist a homeomorphism $h \colon \mathbb B \onto \mathbb Y$ of Sobolev class $\W^{1,p} (\mathbb B , \Y)$ ?
\end{problem}

\section{Sobolev homeomorphisms as elastic deformations}   Elastic deformations of material bodies have fascinated famous scientists for centuries as one can see in the writings of,  for instance,  G. Galileo. H. Hook, L. Euler, J-L. Lagrange. T. Young. A. Cauchy, G. Green, G.R. Kirchoff.  Nowadays, mathematical models for nonlinear elasticity is an very active science. Here we will not cover all the major developments of the modern theory, but will discuss the  principles  with a determination  to rework, using other recent mathematical advances, some of the problematic presuppositions.  At the heart of these presuppositions is the \textit{Principle of Noninterpenetration of Matter},  \cite{Ba0}. This roughly asserts that an energy minimiser should always be a homeomorphism (and perhaps even a diffeomorphism depending on the problem).  Here we will see that one must really adopt the more general class of monotone  ($\,n= 2 \,$)  Sobolev mappings as legitimate \textit{hyperelastic} deformations of  elastic bodies. This leads to the ``\textit{Weak Principle of Noninterpenetration of Matter}'' and even in this apparently weaker context new topological/geometric arguments become even more essential.

\subsection{Hyperelasticity}
  In the mathematical models of nonlinear elasticity, we  study homeomorphisms $\,h \, : \mathbb X \onto \mathbb Y \,$ of smallest \textit{stored energy}, 
\begin{equation}\label{StoredEnergy}\,\mathscr E[h] = \int_\mathbb X \,\mathbf E(x, h, Dh)\;dx\;, \;\;\quad  \mathbf E : \mathbb X \times \mathbb Y \times \mathbb R^{n\times n} \rightarrow \mathbb R\,\end{equation}
where   the so-called \textit{stored energy function} $\,\mathbf E\,$ characterizes the various mechanical and elastic properties of the materials occupying the domains $\,\mathbb X\,$ and $\,\mathbb Y\,$.  We have also written $dx$ as a shorthand for whatever measure $d\sigma_\X(x)$ given as data on $\X$.
The \textit{ $\,p\,$-harmonic energy}, 
 \begin{equation}\label{HyperElasticEnergy}
 \mathscr E_p[h] = \int_\mathbb X |Dh(x)|^p \,dx \end{equation}
 including the Dirichlet integral ($\,p=2\,$),  is the ideal example to illustrate the scheme for the direct method in the Calculus of Variations,  and we will do so later after we introduce a few more natural stored energy functionals.

When $p\geq 1$ the integrand in (\ref{HyperElasticEnergy}) is a convex function of the differential $Dh$.  We will soon see why this is important.   The case $\, p = n\,$ will hereafter  be referred to as the \textit{Conformal Energy}.  This is  due to the fact that $\,\mathscr E_n[h] = \int_\mathbb X |Dh(x)|^n \,dx\,$ is invariant under a conformal change of the $\,x\,$-variable. 
 
 Let us also introduce two other natural functionals. First the so-called \textit{bi-conformal energy} defined for homeomorphisms $\,h \, : \mathbb X \onto \mathbb Y \,$ of Sobolev class $\,\mathscr W^{1,n}(\mathbb X , \mathbb Y)\,$ whose inverse $\,f \bydef h^{-1} : \mathbb Y \onto \mathbb X\,$  also lies in the Sobolev space $\,\mathscr W^{1,n}(\mathbb Y , \mathbb X)\,$

 \begin{equation}\label{BiconformalEnergy1}
 \mathscr E_n[h, f] = \int_\mathbb X |Dh(x)|^n \,dx\,\,\,+\;\;  \int_\mathbb Y |Df(y)|^n \,dy\,\,\,\end{equation}

  This can be formulated, equivalently via the change of variable $\, y = h(x)\,$ in the second integral,  by means of one \textit{polyconvex energy functional} for $\,h\,$ on $\,\mathbb X\,$:
 \begin{equation} \label{BiConformalEnergy2}
 \, \mathscr T[h] = \int_\mathbb X \left(\,|Dh(x)|^n \, +\,  |(Dh(x))^{-1}|^n J(x, h) \,\right) dx   
 \end{equation}
Here $J(x,h)$ is the Jacobian determinant of $Dh$ and polyconvexity, discussed further below, refers to the fact that the integrand is again a convex function of the minors of the differential $Dh$.  We don't assert that it is obvious that $ |(Dh(x))^{-1}|^n J(x, h)$ , or ${|Dh(x)|^n}/{ J(x, h)}$ are such a convex functions.  From the latter we consider the $q$-conformal energy functional
 \begin{equation} \label{qConformal}
 \, \mathscr K_q[h] = \int_\mathbb X \left( \frac{|Dh(x)|^n}{ J(x, h)} \,\right)^q dx   
 \end{equation}
 Here the integrand is actually a distortion function usually denoted
\begin{equation}\label{Kdef} \IK(x,h) = \frac{|Dh(x)|^n}{ J(x, h)} \end{equation}
 and is an infinitesimal measure of a the anisotropic nature of the deformation.  To see this note that for suitably regular deformations $h$ we can order the eigenvalues of $Dh^tDh$ as $0<\lambda_1^2\leq \lambda_2^2< \cdots<\lambda_n^2 <\infty$.  Then  
 
\[ n^{n/2} \left( \frac{ \lambda_1}{\lambda_n} \right)^n \leq  \IK(x,h)^2 = \frac{(\lambda_1^2+\lambda_2^2+\cdots+\lambda_n^2)^{n/2}}{\lambda_1.\lambda_2.\cdots.\lambda_n} \leq n^{n/2} \left( \frac{ \lambda_n}{\lambda_1} \right)^n\]
 Thus $\IK$ is controlled by the {\em linear distortion}, 
 \[ K=K(x,h)=\frac{\lambda_n}{\lambda_1}  = \limsup_{r\to 0} \frac{\max_{|\zeta|=r} |h(x+\zeta)-h(x)|}{\min_{|\zeta|=r} |h(x+\zeta)-h(x)|} \]
 In two dimensions actually $\IK(x,h)=\frac{1}{2}(K+1/K)$.
 A $\,\mathscr W^{1,n}(\mathbb X , \mathbb Y)\,$ homeomorphism between Euclidean domains with $\IK\in L^\infty(\X)$ (equivalently $K\in L^{\infty}(\X)$) is called {\em quasiconformal}. Then $\mathscr K_q[h]$ is sometimes referred to as the $q$-mean distortion.  

 \bigskip

To understand  the existence problem for hyperelastic deformations,  that is minimisers, we must first accept the weak limits of energy-minimizing sequences of homeomorphisms as legitimate deformations. Thus we allow for \textit{weak interpenetration of matter}; roughly speaking, squeezing of portion of the material to a point can occur, but not folding or tearing. This potentially changes the nature of a minimisation problem to the extent that the minimal energy among such weaker deformations might be strictly smaller than the infimum energy among homeomorphisms. Indeed this can happen. However when it does not,  one might subsequently attempt to explain why this squeezing doesn't happen.  A classical example of this topological regularity is  the Rado-Kneser-Choquet theorem for harmonic mappings from 1926 \& 1945,  \cite{Ch,Kn} \\

Different kinds of variational problems  occur naturally in geometric function theory. The first and most important example of course is the Riemann mapping theorem which we now consider in the above context. 

\subsection{Conformal Mappings are frictionless-minimizers of Dirichlet Energy}  

Frictionless refers to problems where we do not prescribe what a minimiser to to be on the boundary of $\X$.  We are merely given $\X$ and $\Y$ as configurations. 

\begin{theorem}\label{RiemannMapping} Let $\,h : \mathbb X \onto \mathbb Y\,$ be a conformal map  between bounded domains
$\,\mathbb X , \mathbb Y \subset \mathbb R^2 \simeq \mathbb C\,$.  Then every orientation preserving homeomorphism $\, f : \mathbb X \onto \mathbb Y\,$ of Sobolev class $\,\mathscr W^{1,2}(\mathbb X , \mathbb C)\,$ has Dirichlet energy at least that of $\,h\,$.  Equality occurs if and only if $f$ is conformal as well.
\end{theorem}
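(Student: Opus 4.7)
The plan is to exploit the classical Bochner-type identity, available in two dimensions, that splits $\lvert Df\rvert^2$ into a null-Lagrangian (the Jacobian) plus a non-negative remainder measuring failure of conformality. Writing $f_z=\tfrac12(f_x-if_y)$ and $f_{\bar z}=\tfrac12(f_x+if_y)$, a direct computation gives
\begin{equation*}
\lvert Df\rvert^2 \;=\; 2\bigl(\lvert f_z\rvert^2+\lvert f_{\bar z}\rvert^2\bigr),\qquad J(x,f)\;=\;\lvert f_z\rvert^2-\lvert f_{\bar z}\rvert^2,
\end{equation*}
so that pointwise a.e.
\begin{equation*}
\lvert Df(x)\rvert^2 \;=\; 2\,J(x,f)\;+\;4\,\lvert f_{\bar z}(x)\rvert^2.
\end{equation*}
This is the cornerstone of the argument, since the second term on the right vanishes exactly for conformal maps.

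Next I would integrate this identity over $\X$. The non-trivial analytic input is that for an orientation-preserving Sobolev homeomorphism in $\mathscr W^{1,2}(\X,\C)$ the change of variables formula holds without defect: $\int_\X J(x,f)\,dx=\lvert\Y\rvert$. This is the standard fact that planar $\mathscr W^{1,2}$ homeomorphisms have the Lusin $(N)$ property and that the Jacobian of an orientation-preserving homeomorphism is non-negative a.e.; combined these yield the area formula. I would cite (or briefly recall) this as the key ingredient. Consequently,
\begin{equation*}
\int_\X \lvert Df\rvert^2\,dx \;=\; 2\lvert\Y\rvert \;+\; 4\int_\X \lvert f_{\bar z}\rvert^2\,dx \;\ge\; 2\lvert\Y\rvert.
\end{equation*}
Since the same identity applied to the conformal homeomorphism $h$ gives $\int_\X\lvert Dh\rvert^2\,dx=2\lvert\Y\rvert$ (as $h_{\bar z}\equiv 0$), we obtain $\mathscr E_2[f]\ge\mathscr E_2[h]$.

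For the equality case, the inequality becomes an equality precisely when $f_{\bar z}=0$ a.e.\ on $\X$. By Weyl's lemma, $f$ is then holomorphic, and being an orientation-preserving homeomorphism onto $\Y$, it is in fact conformal. This finishes both implications.

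The main obstacle, I expect, is not the algebraic identity (which is elementary) but justifying $\int_\X J(x,f)\,dx=\lvert\Y\rvert$ for a general $\mathscr W^{1,2}$ homeomorphism without additional regularity. One cannot simply invoke the chain rule or integration by parts because $f$ need not be smooth up to the boundary and $\X$ need not be regular. The clean way is to appeal to the general area formula for Sobolev homeomorphisms at the critical exponent $p=n=2$, where the $(N)$ property is known; alternatively one can truncate to compactly contained subdomains $\X'\Subset\X$, use that $J(\cdot,f)\ge 0$ a.e.\ together with $\int_{\X'}J(x,f)\,dx=\lvert f(\X')\rvert$, and let $\X'\nearrow\X$ invoking monotone convergence and the fact that $f(\X')\nearrow\Y=f(\X)$ since $f$ is a homeomorphism.
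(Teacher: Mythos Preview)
Your proof is correct and follows essentially the same route as the paper's own argument: both use the complex-derivative identity $|Df|^2=2(|f_z|^2+|f_{\bar z}|^2)$ together with $J(\cdot,f)=|f_z|^2-|f_{\bar z}|^2$, the fact that $\int_\X J(\cdot,f)=|\Y|$ for an orientation-preserving $\mathscr W^{1,2}$ homeomorphism, and Weyl's lemma for the equality case. Your additional care in justifying the area formula (via the Lusin $(N)$ property or exhaustion by compact subdomains) simply makes explicit what the paper records as a remark after its proof.
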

 \begin{proof}
 Using complex variables $\, z = x_1 + i x_2 \in \mathbb X\,$,  $\frac{\partial}{\partial z}= \frac{1}{2}\Big( \frac{\partial}{\partial x}-i\frac{\partial}{\partial y}\Big)$ and so forth,  the statement  reads as follows
\begin{eqnarray*}
\mathscr E_2[h] & = &  2 \int_{\mathbb X} |h_{z}|^2 + |h_{\overline{z}}|^2  =  2 \int_{\mathbb X} |h_{z}|^2 - |h_{\overline{z}}|^2  
= 2 \int_{\mathbb X} J(z,h) dz\\& = &  2 |\mathbb Y| =   2 \int_{\mathbb X} J(z,f) dz = 2 \int_{\mathbb X} | f_{z}|^2 - | f_{\overline{z}}|^2 \leqslant
 \mathscr E_2[f]
 \end{eqnarray*}
 Equality occurs if and only if $f_{\overline{z}} = 0 $, which from Weyl's lemma implies that $\,  f : \mathbb X \onto \mathbb Y\,$ is conformal.
 \end{proof}
 Here we have used the fact that both Jacobians  $\, J(z,h) = |h_{z}|^2 - |h_{\overline{z}}|^2 \,$ and
$\,J(z,f) =  |f_{z}|^2 - | f_{\overline{z}}|^2\,$ have the same integral over $\,\mathbb X\,$, namely  the area of $\,\mathbb Y\,$. This property is given  to more general nonlinear differential expressions called {\em Free Lagrangians} we discuss later. 

 \begin{remark}
 As remarked above, conformal mappings are solutions to the \textit{first order differential equations} $f_{\bar z}=0$,  originally introduced by D'Alembert and   traditionally referred to as  the Cauchy-Riemann equations. By contrast, the variational equations for the stored energy at (\ref{StoredEnergy})
are second order PDEs (in this case the Laplacian) which is usually subject to prescribed boundary values  $\,h_0 : \partial \mathbb X \rightarrow \partial \mathbb Y\,$. Typically in the Riemann mapping problem we are only given the initial domain $\mathbb{X}=\mathbb{D}$ and the target domain $\,\mathbb Y\,$ (that is  the {\em shape} of the deformed configuration) without specifying the boundary values of $\,h\,$ - that is how the boundary should be deformed. Of course $\mathbb{X}$ may also be any simply connected domain.

With boundary restrictions the minimisation problem would be \textit{ill posed}. That is one cannot prescribe the boundary values of a conformal mappings.  The minimisation problem would yield a solution within harmonic deformations,  but whose real and imaginary parts need not be harmonic conjugate.  

 In fact, this is the  simplest and most natural example of a general frictionless problems in the calculus of variations. frictionless problems concern energy-minimal deformations $\,h _\circ :\mathbb X \onto \mathbb Y\,$ (usually homeomorphisms) with no prescribed boundary map  $\,h_\circ:  \partial \mathbb X \onto \partial \mathbb Y\,$; in other words, tangential slipping along $\,\partial \mathbb X\,$ is allowed. In nonlinear elasticity this is physically realised when deforming confined incompressible material. The use of the direct method for these sorts of problems has brought us to the concept of \textit{Free Lagrangians}.  However first, let us briefly outline the evolving concept of \textit{Null Lagrangians}, extensively discussed  and developed in the celebrated
paper \cite{BallCurrieOlver}. This paper includes the references  for earlier significant earlier contributions to this idea.  \end{remark}

\section{Direct Method for $\,p\,$-harmonic energy.} \label{pHarmonicExample} As above, a
 representative  example  for convex functionals is  the $\,p\,$-harmonic energy of mappings  $\,h : \mathbb X \rightarrow \,\mathbb R^m\,$ with prescribed boundary conditions.

 \begin{equation}\label{ConvexEnergy}\,\mathscr E_p[h] \; =  \int_\mathbb X \,|\,Dh(x)\,|^p \,\textnormal{d} x\; ,\;\;\;\; h \in h_\circ  + \,\mathscr W^{1,p}_0 (\mathbb X, \mathbb R^m)\;\;,\;  1<p <\infty\;.     \end{equation}
Here,  the given mapping $\,h_\circ \in \mathscr W^{1,p}(\mathbb X, \mathbb R^m)\,$ takes the role of  boundary data in the   \textit{weak formulation of the Dirichlet problem}. The function space $\mathscr W^{1,p}_0$ refers to those functions vanishing on the boundary (in the Sobolev sense - the closure of $\mathscr{C}^{\infty}_{0}(\X)$ in the appropriate norm).

 \bigskip

We note the following aspects of the setup here.
\begin{itemize}
\item  We are naturally using the separable reflexive Banach space
$$\,\mathfrak B = \mathscr W^{1,p}(\mathbb X, \mathbb R^m)\,$$ 
\item  The functional $\,\mathscr E_p\,$, subject to minimization,  is defined on a subset
$$\,\mathfrak B_\circ \bydef h_\circ  + \,\mathscr W^{1,p}_0 (\mathbb X, \mathbb R^m)\,$$
 which is closed in the weak topology of $\,\mathfrak B\,$.
\item  {\em Coercivity Condition}. We have a condition which controls the $\,\mathfrak B\,$-norm of  $\, f \in \mathfrak B_\circ \,$ by means of its energy $\,\mathscr E_p[f]\,$. In the above example, we have a routine estimate
\begin{equation}\label{pHarmonicCoercivity}\,
   |\!| f |\!|_{_{\mathfrak B}}^p \;\bydef  \int_{\mathbb X }|Df|^p  + |\,f\,|^p  \; \sim \;  |\!| h_\circ |\!|_{_{\mathfrak B}}^p \;+\;
   \mathscr E_p[f]\;,\;\; \textnormal {for every}\,\,f \in \mathfrak B\,
 \end{equation}
 \item \textit{An energy-minimizing sequence} has a limit.  That $\,h_\kappa \in \mathfrak B_\circ\,, \; \kappa = 1,2,... \;$ ,  means that $$\,\inf\{\mathscr E_p[h] \,;\, h \in \mathfrak B_\circ   \}= \lim \mathscr E_p[h_\kappa] .\,$$
       Since $\,\mathfrak B\,$ is reflexive, we may extract from $\,\{h_\kappa\}\,$ a subsequence, still denoted by $\,\{h_\kappa\}\,$, converging weakly to a mapping $\,h_\infty \in \mathfrak B_\circ\,$.
  \item \textit{Lower semicontinuity}.  Now, everything hinges on establishing the inequality
  \begin{equation}\label{Semicontinuity}
  \mathscr E[f] \; \leqslant \liminf_{i \rightarrow \infty} \,\mathscr E[f_i]\;,\; \textnormal{whenever}\; \,f_i  \rightharpoonup f\, , \,\textnormal{ weakly in}\;  \,\mathfrak B\,
  \end{equation}
Here we are only interested in (\ref{Semicontinuity}) for energy minimising sequences,  but as a general property the sequence $\,\{f_i\}\,$ need  not be  energy-minimizing. Customary terminology refers to the energy functional satisfying  (\ref{Semicontinuity})    as being  \textit{sequentially weakly lower semicontinuous}. For simplicity we omit the words``sequentially weakly". The route to establishing lower semicontinuity goes through a subgradient estimate.

\item \textit{Subgradient Estimate}. For an integrand $\,\mathbf E(x,y, \xi)\,$ that is convex with respect to $\, \xi \in \mathbb R^{m\times n}\,$  we have a subgradient estimate:
    \begin{equation}
    \mathbf E(x, y, \xi) \; - \mathbf E(x,y, \xi_\circ) \;  \geqslant \; \Big{\langle}\,\nabla_\xi\mathbf E(x,y, \xi_\circ)\; \;\Big{|}\;\; \xi - \xi_\circ \; \Big{\rangle}
    \end{equation}
    Here the symbol $\, \langle\; | \; \rangle\,$ stands for the inner product of $\,m\times n\,$-matrices .  In our $\,p\,$-harmonic example this inequality, upon integration,  reads as
    \begin{equation}
     \int_\mathbb X |Df(x)|^p\, dx \; -\;  \int_\mathbb X |Df_\circ(x)|^p\, dx \;\geqslant p \int_\mathbb X \Big{\langle}\,|Df_\circ |^{p-2} Df_\circ\; \;\Big{|}\;\; Df\,-\, Df_\circ \; \Big{\rangle}
    \end{equation}
    \end{itemize}
    
Now we take $\, f = h_\kappa \, $ and $\,f_\circ = h_\infty\,$.  Then $\, Dh_\kappa  - Dh_\infty\, $ converges weakly to zero in the space $\,\mathscr L^{\,p}(\mathbb X, \mathbb R^{m\times n})\,$  and $\,|Df_\circ |^{p-2} Df_\circ \,$ lies in the dual space $\,\mathscr L^{\,q}(\mathbb X, \mathbb R^{m\times n})\,,\; \frac{1}{p}  + \frac{1}{q} = 1\,$. Hence

    $$
     \int_\mathbb X |Dh_\kappa(x)|^p\, dx \; \, \rightarrow \;\; \int_\mathbb X |Dh_\infty (x)|^p\, dx 
    $$
    as required. 
   
   \bigskip

    \begin{remark} This natural illustration of the direct method actually has wide-ranging enhancements in which the role of  weak convergence $\, Dh_\kappa \rightharpoonup Dh_\infty\,$ is taken by \textit{Null Lagrangians}.  We discuss these next.

    \end{remark}

 \section{Null Lagrangians and polyconvex functionals}

\subsection{Null Lagrangians} Consider a Sobolev mapping $\,f = (f^1,... , f^m) :\mathbb X \rightarrow \mathbb R^m\,$  of an open region $\,\mathbb X \subset \mathbb R^n\,$ into $\,\mathbb R^m$.
 The term \textit{null Lagrangian} pertains to
 a nonlinear differential $\,n\,$-form $\,\mathbf N(x,f, Df)\,dx\,$, whose integral mean over
 any open subregion $\, \Omega \subset \mathbb X\,$ depends only on the boundary values of $\,f : \partial \Omega \rightarrow \mathbb R^m\,$.  While there are technicalities here such as defining a Sobolev function on $\partial \Omega$,  typically these things are not at issue when $f$ is at least continuous.

 This condition is reminiscent of that for exact differential forms, by virtue of Stokes' formula. It may very well be right to call such
expressions  \textit{nonlinear exact  forms}.  Distinctive examples are furnished by the
\emph{subdeterminants of the $\,m \times n\,$ matrix} of the linear tangent map $\, Df : \mathbb R^n \into \mathbb R^m\,$  called the \textit{deformation gradient},

$$
 Df(x) \,\bydef\,  \left[
 \begin{array}{cccc}
\frac{\partial f^1}{\partial x_1}&\frac{\partial f^1}{\partial
x_2}&

\ldots&\frac{\partial f^1}{\partial x_n}\\
\\

\frac{\partial f^2}{\partial x_1}&\frac{\partial f^2}{\partial x_2}&
\ldots&\frac{\partial f^2}{\partial x_n}\\

\vdots&\vdots&{}&\vdots\\

\frac{\partial f^m}{\partial x_1}&\frac{\partial f^m}{\partial
x_2}&\ldots&\frac{\partial f^m}{\partial x_n}
\end{array}\right]\, = \left[\frac{\partial f^i}{\partial
x_j}\right]\in\mathbb{R}^{m\times n},\;i=1,...,m\,,\,j=1,...,n
$$
To every pair $(I,J)$
of ordered $\ell$-tuples $I:\; 1\leqslant
i_1<i_2<...<i_\ell\leqslant m$ and $J:\; 1\leqslant
j_1<j_2<...<j_\ell\leqslant n$, with   $1\leq \ell\leqslant
\min\{m,n\}$, there corresponds an  $\ell\times \ell$
-minor \index{minor} of $ Df(x)$, denoted by

\begin{equation}\label{8}
  \frac{\partial f^I}{\partial\, x_{_{\!J}}}  = \frac{\partial
(f^{i_1},...\,,f^{i_\ell})}{\partial (x_{j_1},...\,,x_{j_\ell})}
\end{equation}\\
These  minors,  are the
coefficients of the wedge product:
$$
     df^{i_1}\wedge ...\wedge df^{i_\ell} =\sum_{1\leqslant
     j_1<...<j_\ell\leqslant
      n} \frac {\partial (f^{i_1},...\,,\,f^{i_\ell})}{\partial
      (x_{\!j_1},...\,,\,x_{\!j_\ell})}\;\; dx_{\!j_1}\wedge ...\wedge
      dx_{\!j_\ell}=\text{\Large$\sum_J$}\;\frac{\partial f^I}{\partial x_{_{\!J}}}\; dx_{_{\!J}}
$$\\
Stokes' formula tells us 
that for  $ \,f,g \in \mathscr
W^{1,\,\ell}(\Omega, \mathbb R^m)$ we have
\begin{equation}
 \underset{\Omega}{\int} \frac {\partial f^I}{\partial x_{_{\!J}}}
\;\mathrm dx \;= \;\underset{\Omega}{\int} \frac {\partial
g^I}{\partial x_{_{\!J}}} \;\mathrm dx,\;\;\;\;\;\;\;\;\;\;\;\text{provided}\;\; f-g
\in \mathscr W^{1,\,\ell}_\circ(\Omega, \mathbb R^m)
\end{equation}

This leads to the affine combinations (with constant coefficients) of the Jacobian subdeterminants as examples of null Lagrangians
\begin{equation}\label{NullLag}
 \mathbf N(x,f, \,D\!f) \;= \overset{\; \text{min}\{m,n\}}
 {\underset{\ell\;=\;0}
{\text{\LARGE{$\sum$}}}}  \;\;\underset{  {1\leqslant
i_1<...<i_\ell\leqslant m  } \atop {1\leqslant
j_1<...<j_\ell\leqslant n} } {\text{\LARGE{$\sum$}}}\;\;\;\;
\text{\LARGE{$\lambda$}}_{i_1...i_\ell}^{j_1...j_\ell} \;\; \frac
{\partial
(f^{i_1},...\,,\,f^{i_\ell})}{\partial(x_{\!j_1},...\,,\,x_{\!j_\ell})} \;\bydef \; \mathbf N(D\!f)
\end{equation}\\
where we adhere to the convention that the term with
$\,\ell = 0\,$ is a constant function.  In fact we have the following characterisation.
\begin{theorem}\label{BasicNullLagrangians}
Formula (\ref{NullLag}) represents all null Lagrangians of the form $$\,\mathbf{N}(x, f, Df)\, d x \; =\; \mathbf {N}(Df)\, d x\,$$  
\end{theorem}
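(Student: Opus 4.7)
The plan is to prove both directions separately. The forward implication—that every expression displayed in (\ref{NullLag}) is a null Lagrangian—is essentially contained in the preceding discussion: each minor $\partial f^I/\partial x_J$ is, up to a sign, a coefficient of the exact differential form $df^{i_1}\wedge\cdots\wedge df^{i_\ell}$ wedged with an appropriate complementary product $dx_{k_1}\wedge\cdots\wedge dx_{k_{n-\ell}}$, so Stokes' theorem identifies its integral over $\Omega$ with one over $\partial\Omega$ that depends only on the boundary values of $f$.

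For the converse my strategy is to derive that $\mathbf{N}$ is rank-one affine on $\R^{m\times n}$ and then invoke the algebraic classification of such functions. First I would reformulate the null Lagrangian hypothesis as follows: for every test map $\varphi\in\mathscr{C}_c^\infty(\Omega,\R^m)$ and every $f$, the function $t\mapsto\int_\Omega\mathbf{N}(Df+tD\varphi)\,dx$ is constant in $t$. Differentiating once at $t=0$ yields $\int_\Omega\langle\nabla_\xi\mathbf{N}(Df)\mid D\varphi\rangle\,dx=0$ for every admissible $\varphi$, i.e., the Euler--Lagrange system $\Div\bigl(\nabla_\xi\mathbf{N}(Df)\bigr)=0$ holds identically in $f$.

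Next I would specialize to affine maps $f(x)=\xi x$ (so that $Df\equiv\xi$) and test against perturbations of the form $\varphi(x)=\eta(b\cdot x)\,a$, with $a\in\R^m$, $b\in\R^n$, and $\eta\in\mathscr{C}_c^\infty(\R)$. Since $D\varphi(x)=\eta'(b\cdot x)\,a\otimes b$ is rank-one, plugging into the constancy identity (or equivalently, running the oscillating-gradient construction $f_k(x)=\xi x+k^{-1}\eta(k\,b\cdot x)\,a$ and passing to the limit by Riemann--Lebesgue) forces $t\mapsto\mathbf{N}(\xi+t\,a\otimes b)$ to be affine in $t$ for every matrix $\xi$ and every rank-one $a\otimes b$. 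Equivalently, $\mathbf{N}$ is both rank-one convex and rank-one concave.

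The final and principal step is the algebraic characterization: a smooth rank-one affine function on $\R^{m\times n}$ is necessarily an affine combination (with constant coefficients) of the minors $\partial f^I/\partial x_J$ of all orders $0\le\ell\le\min\{m,n\}$. One proceeds by induction on $r=\min\{m,n\}$: the identity $\tfrac{d^2}{dt^2}\mathbf{N}(\xi+t\,a\otimes b)\equiv 0$ yields algebraic identities on the Hessian of $\mathbf{N}$ that force it to be a polynomial in the entries of $\xi$ of total degree at most $r$; peeling off the top-degree homogeneous piece, which by a direct linear-algebra argument must be a constant combination of $r\times r$ minors, reduces to a lower-dimensional instance. This structural theorem for rank-one affine polynomials is the technical heart of the result; I expect it to be the main obstacle, and its detailed proof is the content developed in \cite{BallCurrieOlver}.
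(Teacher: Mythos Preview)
The paper does not actually supply a proof of Theorem~\ref{BasicNullLagrangians}; immediately after the statement it simply records that ``This result goes back to \cite{Landers,Edelen,Ericksen,Rund}'' and moves on. So there is no in-paper argument to compare against.

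Your outline is the standard route and is essentially correct. A couple of places could be tightened. First, the Euler--Lagrange step you write down (differentiating once at $t=0$) is not what drives the rank-one affinity conclusion; it is the full constancy of $t\mapsto\int_\Omega\mathbf N(\xi+tD\varphi)\,dx$ that matters, and you use it via the oscillating laminate construction you mention parenthetically. It is cleaner to go straight to that: take a periodic sawtooth $\eta$ so that $\eta'$ assumes two values $s_1,s_2$ with prescribed volume fractions $\theta,1-\theta$, cut off near $\partial\Omega$, and let the period tend to zero; the null-Lagrangian identity $\int_\Omega\mathbf N(\xi+D\varphi_k)\,dx=|\Omega|\,\mathbf N(\xi)$ then yields $\theta\,\mathbf N(\xi+s_1\,a\otimes b)+(1-\theta)\,\mathbf N(\xi+s_2\,a\otimes b)=\mathbf N(\xi)$ for all $\theta\in[0,1]$, which is exactly rank-one affinity. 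Second, the inductive classification of rank-one affine functions as affine combinations of minors is indeed the crux, and your sketch (polynomiality of bounded degree, then peeling off the top-degree piece as a linear combination of maximal minors) is the right shape; this is precisely what is carried out in \cite{BallCurrieOlver} and in the earlier references the paper cites, so deferring the details there is appropriate.
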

This result goes back to \cite{Landers,Edelen, Ericksen, Rund}.
Should it be required to appeal to a first order null Lagarangians of the general form $\,E(x,f,D\!f)\,dx$, we refer  to   the work by  de Franchis \cite{deFranchis}.

The utility of  null Lagrangians is best illustrated for polyconvex functionals discussed next.

\subsection{Polyconvexity}  In his mathematical models for nonlinear elasticity  \cite{Bac}  J. Ball made the crucial observation that if the convexity of the stored energy integrand $\,\mathbf E(x, h, Dh )\,$ , with respect to the deformation gradient $\,Dh(x) \in \mathbb R^{m\times n}\,$, must be ruled out, it could be replaced by a weaker requirement; namely, expressing the integrand as a convex function of  subdeterminants of $\,Dh\,$.
\begin{equation}\label{PolyconvexIntegrand}
\mathbf E(x,h, Dh) \,= \,  \mathbf E_{_\maltese}(x,h,\, \textnormal{subdeterminants of} \; Dh)
\end{equation}
The number of all $\,\ell\times \ell\,$-subdeterminants with  $\,0 \leqslant \ell \leqslant \min\{m , n\}\,$   is equal to $\, {m+n \choose n }  \,$. Thus we are assuming that for every pair  $\,(x, y) \in \mathbb X\times \mathbb Y\,$  the function
$$
\mathbf E_{_\maltese}(x,y,\, \cdots)\, ; \,\mathbb R^{^{{m+n \choose n }}} \,\rightarrow \mathbb R\;, 
$$
is convex.  The idea of minimizing polyconvex energy functionals is based on  a quite  far reaching extension of the direct method of the calculus of variations that we outlined earlier.  It has turned out that so far this is the only practical idea that offers substantially more than that of just minimising convex energies,  for instance   the $p$-harmonic example.

\begin{remark} There is an  extensive literature dealing with Morrey's notion of \textit{Quasiconvexity}, \cite{Morrey}. However, from the point of view of mathematical challenges, this concept  is not much more than  a reformulation of the lower semi-continuity of the energy functionals and as such there remain only technical issues. One needs to develop this idea much  further mathematically before it might be usefully applied.
\end{remark}

\subsection{Nearly Conformal Deformations} \label{NearlyConformal} Given a bounded domain $\,\mathbb X \subset \mathbb R^n\,$,
 we  look at the  mappings $\, h  : \mathbb X \rightarrow \mathbb R^n\,$ in the reflexive Banach space  $\,\mathfrak B \bydef \mathscr W^{1, np}(\mathbb X, \mathbb R^n )\,,\; 1 \leqslant p < \infty\,$. Then the following nonlinear functional is well defined on this space.

\begin{equation}\label{PolyConvexEnergy}\,\mathscr E_{n,p}[h] \; =  \int_\mathbb X \,\left(|\,Dh(x)\,|^n \;-\; n^{n/2}\, \textnormal{det} Dh(x) \right)^p \,\textnormal{d} x\;  \;\; < \infty    \end{equation}
 Note that the integrand is non-negative and vanishes only if $\,h\,$  satisfies the $\,n\,$-dimensional variant of the Cauchy-Riemann system.
 \begin{equation} \label{nDimensionaldAlembert}
 \mathcal K(Dh) \equiv 0\;,\; \;\textnormal{where}\; \;\mathcal K(X) \bydef |X|^n - n^{n/2}\, \textnormal{det}\,X \, \geqslant 0\;\;,\;\; \textnormal{for}\, X \in \mathbb R^{n\times n}
 \end{equation}
 This motivates our calling $\,\,\mathscr E_{n,p}\,$ a \textit{nearly conformal energy functional}.
 The Dirichlet boundary value problem consists of minimizing $\,\mathscr E_{n,p}[h]\,$ subject to mappings $\,h  \in  \mathfrak B_\circ \bydef
 h_\circ  + \,\mathscr W^{1,n p}_0 (\mathbb X, \mathbb R^n)\,$,  where $\,h_\circ \in \mathfrak B\,$ is given boundary data.
  Here are the essential  steps in the Direct Method. \\

 \begin{itemize}
  \item  \textit{\textbf{Coercivity in the mean}}   The reason why the above example is approachable is that while we do not have point-wise coercivity in terms of  the integrand,   the energy functional  $\,\mathscr E_{n,p}\,$ still exhibits  \textit{coercivity in the sense of integral means}; precisely,
     \begin{equation} \label{MeanCoercivity}
     \int_\mathbb X  |Dh(x)|^{np} dx \; \sim \; \mathscr E_{n,p} [h] \; +\; \int_\mathbb X  |Dh_\circ(x)|^{np} dx
      \end{equation}
      To see this we appeal to the  general (but not vary obvious) estimate (10.12) in \cite{IL-arma}  for mappings $\,f \in \mathscr W^{1, np}_\circ (\mathbb X, \mathbb R^n)\,$
  \[ 
 \int_\mathbb X  |Df(x) | ^{np} \,\textnormal{d} x  \sim \; \mathscr E_{n,p} [f]\;
\] 
applied to the mapping $f = h - h_\circ$.
     \item  \textit{\textbf{Subgradient Estimate}} This is  a fairly direct consequence of polyconvexity of the integrand. Precisely, for $\,n\times n\,$-matrices $\, X, X_\circ\in \mathbb R^{n\times n}\,$ we have
\begin{eqnarray*}
         \mathcal K^p(X) \,-\, \mathcal K^p(X_\circ) & \geqslant & p \,\mathcal K^{p-1}(X_\circ)\; \big {[}\, \mathcal K(X)\;-\; \mathcal K(X_\circ) \,\big{]} \\
       & \geqslant &  n \,p\,  \Big{\langle } \mathcal K^{p-1}(X_\circ)\; |X_\circ |^{n-2} X_\circ \,\Big {|} \,\,X \, -\, X_\circ \Big{\rangle }  \\
&& - n^{n/2} \,p\,\,   \mathcal K^{p-1}(X_\circ)\;\Big{[} \textnormal{det}\,X\;-\; \textnormal{det} \,X_\circ  \Big{]}
\end{eqnarray*}
  \item \textit{\textbf{Lower semi-continuity}}  The required inequality  (\ref{Semicontinuity})\, can be achieved by applying the above subgradient estimate  to $\,X = Df_i(x)\,$  and $\,X_\circ = Df(x)\,$. We conclude, upon integrating over $\,\mathbb X\,$, that
\begin{eqnarray*}
      \lefteqn{   \mathscr E_{n,p}[f_i]   - \mathscr E_{n,p}[f]  }\\& \geqslant & n \,p\, \int_\mathbb X \Big{\langle } \mathcal K^{p-1}(Df)\; |Df |^{n-2} Df \,\Big {|} \,\,Df_i\, -\, Df \Big{\rangle \;\; \;\;\;(\;\textnormal{converging to}\; \; 0\,) }\\
&& - n^{n/2} \,p\, \int_\mathbb X  \mathcal K^{p-1}(Df)\;\Big{[} \textnormal{det}\,Df_i\;-\; \textnormal{det} \,Df \Big{]} \;\; \;\;\;\;\;(\;\textnormal{converging  to} \;\; 0\,)
\end{eqnarray*}

      The first limit is justified by the fact that $\,Df_i \rightharpoonup Df\,$, weakly in the space $\,\mathscr L^{np}(\mathbb X)\,$,  and the integration takes place against the factor $\,\mathcal K^{p-1}(Df)\; |Df |^{n-2} Df \,\approx\, |Df|^{np - 1} \,$ which lies in the dual space $\,\mathscr L^{\frac{np}{np-1}}(\mathbb X)\,$.  Similarly, for $\,p > 1\,$,  the null Lagrangians  $\,\textnormal{det}\,Df_i \,$ converge to  $\textnormal{det}\,Df\,$ weakly in $\,\mathscr L^p(\mathbb X)\,$  and we integrate them against a function   $\,\mathcal K^{p-1}(Df)\approx | Df|^{np-n}  \,$ which belongs to the dual space $\,\mathscr L^{\frac{p}{p-1} }(\mathbb X)\,$. 
      
      The case $\,p=1\,$ needs  handling with greater care. It is not generally true that
      $\,\lim \int_\mathbb X  \textnormal{det}\,Df_i(x) \,dx \;=\; \int_\mathbb X  \textnormal{det}\,Df(x) \,dx \;$,\; whenever\,\; $\,f_i \rightharpoonup f\,$, weakly in\; $\,\mathscr W^{1,n}(\mathbb X)\,$.  For instance the sequence of M\"{o}bius transformations  $\, f_i : \mathbb B \onto \mathbb B\,$ of the unit ball $\,\mathbb B \subset \mathbb R^n\,$ such that $\, f_i(0) \rightarrow  a  \in\,\partial \mathbb B\,$. Their weak limit (indeed locally uniform in  $\mathbb B$) is $\,f \equiv a\,$,  which has vanishing Jacobian.  However  $\, \int_\mathbb B  \textnormal{det}\,Df_i(x) \,dx \,=\, |\mathbb B\,| > 0\,$. 
      
      The situation is quite different if we confine ourselves to the energy-minimising sequence of mappings  $\, f_i \in \mathfrak B_\circ\,$ in which $\, f_i \,\in\, f_0  +\;  \mathscr W^{1,n}_0 (\mathbb X)\,$. Once the boundary values of $\,f_i\,$ are fixed,  the weak limit enjoys the same boundary values; that is,  $\, f \,\in\, f_0  +\;  \mathscr W^{1,n}_0 (\mathbb X)\,$. Thus, for all $\,i = 1,2, ...\,$,  we have the following identities
      $$
      \int_\mathbb X \textnormal{det} \,Df_i(x)\, \textnormal dx \;=\;\int_\mathbb X \textnormal{det} \,Df_0(x)\, \textnormal dx \;=\;\int_\mathbb X \textnormal{det} \,Df(x)\, \textnormal dx\;,
      $$
      by the very definition of null Lagrangians.
        \end{itemize}
        
      Now,  with these estimates at hand we may follow the principles of the direct method,  and thereby obtain the following.  
      \begin{proposition}\label{ExistenceNearlyConformalDeformations}
      The nearly conformal energy (\ref{PolyConvexEnergy}) , subject to a given boundary data $\,h_\circ \in \mathscr W^{1,np}(\mathbb X, \mathbb R^n)\,$, attains its infimum.
      \end{proposition}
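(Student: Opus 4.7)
The plan is to execute the direct method of the calculus of variations on the reflexive Banach space $\mathfrak B = \W^{1,np}(\X,\R^n)$, minimising over the affine subset $\mathfrak B_\circ = h_\circ + \W^{1,np}_0(\X,\R^n)$, which is closed under weak convergence. First I would fix a minimising sequence $\{h_i\}\subset \mathfrak B_\circ$ with $\mathscr E_{n,p}[h_i]\to \inf_{\mathfrak B_\circ}\mathscr E_{n,p}$. The coercivity estimate (\ref{MeanCoercivity}), applied to each $h_i$, produces a uniform bound on $\|h_i\|_{\mathfrak B}$. Reflexivity then extracts a subsequence (still indexed by $i$) converging weakly to some $h_\infty\in\mathfrak B_\circ$. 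It then remains to establish
\[
\mathscr E_{n,p}[h_\infty]\;\le\;\liminf_{i\to\infty}\mathscr E_{n,p}[h_i],
\]
which combined with the minimising property forces $h_\infty$ to attain the infimum.

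For the lower-semicontinuity inequality I would apply the polyconvex subgradient estimate displayed in the excerpt pointwise with $X = Dh_i(x)$ and $X_\circ = Dh_\infty(x)$, and then integrate over $\X$. The resulting inequality bounds $\mathscr E_{n,p}[h_i]-\mathscr E_{n,p}[h_\infty]$ from below by the sum of two integrals, both of which must be shown to vanish in the limit. The first is a duality pairing between $Dh_i - Dh_\infty$, which converges weakly to zero in $\mathscr L^{np}(\X)$, and the fixed multiplier $\K^{p-1}(Dh_\infty)\,|Dh_\infty|^{n-2}Dh_\infty$, whose pointwise size is of order $|Dh_\infty|^{np-1}$, placing it in the dual space $\mathscr L^{np/(np-1)}(\X)$. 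By the very definition of weak convergence this pairing tends to zero.

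The main obstacle is the second integral, containing the Jacobian difference $\det Dh_i - \det Dh_\infty$ weighted by $\K^{p-1}(Dh_\infty)\approx |Dh_\infty|^{np-n}$. For $p>1$ the weight lies in $\mathscr L^{p/(p-1)}(\X)$, and the uniform bound on $Dh_i$ in $\mathscr L^{np}(\X)$ is precisely what the classical weak-continuity theorem for minors of gradients requires in order to yield $\det Dh_i \rightharpoonup \det Dh_\infty$ in $\mathscr L^p(\X)$; the second duality pairing therefore also vanishes in the limit. The borderline case $p=1$ is more delicate, as the M\"obius example recalled in the excerpt shows that weak $\mathscr L^1$-convergence of the Jacobian can genuinely fail. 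Here the saving grace is that the weight $\K^{\,0}\equiv 1$ is constant while $\det Dh$ is a null Lagrangian whose integral over $\X$ depends only on the boundary trace; since $h_i - h_\circ,\; h_\infty - h_\circ \in \W^{1,n}_0(\X,\R^n)$, one has the identities
\[
\int_\X \det Dh_i\,dx \;=\; \int_\X \det Dh_\circ\,dx \;=\; \int_\X \det Dh_\infty\,dx
\]
for every $i$, which force the second term to be identically zero. Assembling the two limits yields the desired lower semicontinuity, and therefore the existence of a minimiser in $\mathfrak B_\circ$.
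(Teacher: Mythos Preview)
Your proposal is correct and follows essentially the same route as the paper: the direct method on $\mathfrak B_\circ = h_\circ + \W^{1,np}_0$, coercivity via (\ref{MeanCoercivity}), the polyconvex subgradient estimate applied with $X=Dh_i$ and $X_\circ=Dh_\infty$, and the identical two-term analysis for lower semicontinuity, including the split between $p>1$ (weak $\mathscr L^p$-convergence of Jacobians against the $\mathscr L^{p/(p-1)}$ weight) and $p=1$ (the null-Lagrangian identity forced by common boundary data).
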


The example above shows how the over-arching  strategy of the direct method can be nuanced in sophistication at each step. The following specific example further demonstrates this point.

\section{A Very Weak Domain of Definition for  Nearly Conformal Deformations}  \label{WeaklyConformal} Given a bounded domain $\,\mathbb X \subset \mathbb R^n\,$, we look at the energy functional

\begin{equation}\label{VeryWeaklyConformalFunctional}
\mathscr E^\sharp [h]  \bydef \; \varepsilon \,\Big (\int_\mathbb X |Dh|^{n-1} \,\Big) ^{\frac{n}{n-1}}\; +\; \int_\mathbb X \Big[ \lambda\, |D^\sharp h|^{\frac{n}{n-1}} \; -\; n^{n/2n-2} \, \textnormal{det} Dh \Big]
\end{equation}
with the fixed constant parameters $\,\varepsilon >0\,$  and $\,\lambda > 1\,$. Here the notation $\,X^\sharp\,$ for $\,X \in \mathbb R^{n\times n}\,$ stands for the cofactor matrix $\,X^\sharp \in \mathbb R^{n\times n} \,$ whose entries are  $\,\pm\,$ subdeterminants of size $\,(n-1) \times (n-1)\,$ of the matrix  $\,X\,$. Thus the differential $\,n\,$-form $\,[D^\sharp h]\,dx\,$ is a matrix of null Lagrangians.  We always have  $\,  |X^\sharp |^{\frac{n}{n-1}} \; -\; n^{n/2n-2} \, \textnormal{det} X  \geqslant  0 \,$,  equality occurs if and only if $\,X\,$ is a similarity matrix of  non negative determinant. Thus we always have $\, \mathscr E^\sharp [h]  \geqslant 0\,$. 

The border line case of $\,\varepsilon = 0\,$ and $\,\lambda = 1\,$ reduces to the energy functional which distinguishes orientation preserving conformal mappings as its absolute minima; that is, $\,\mathscr E^\sharp [h] = 0\,$. When $n=2$ such minima are either constant mappings, M\"{o}bius transformations or a holomorphic function.

\medskip

From a different perspective energy-minimizers of these type of functionals are discussed in \cite {MullerQiYan}, where the existence results are presented  for mappings with $\,\textnormal{det} Dh(x) \geqslant 0\,$, see Lemma 4.1 therein. Let us demonstrate here how to remove this assumption.

\medskip

At first glance the space $\,\mathscr W^{1,n}(\mathbb X, \mathbb R^n) \,$ would seem to be the natural domain of definition of $\, \mathscr E^\sharp [h]\,$. Unfortunately,  energy-minimizing sequences need not be bounded in this space.  We recall the direct method in this particular case to make a few observations along the way.
\begin{itemize}
\item   \textit{\textbf{Seek energy-minimizers in the Banach space}}
\[ \mathfrak B \bydef \mathscr W^{1,n-1}(\mathbb X, \mathbb R^{n-1}) \]
\item \textit{\textbf{$\,\mathfrak B_\circ \subset \mathfrak B\,$}} This time the subset  $\mathfrak B_\circ$ is somewhat tricky to define and needs handling with care. We have the given boundary data  $\,h \in h_\circ +  \mathscr W^{1, n-1}_\circ(\mathbb X, \mathbb R^n)$. The matrix of cofactors $\,D^\sharp h_\circ \,$   must also lie in the space $\mathscr L^{\frac{n}{n-1}}(\mathbb X, \mathbb R^{n\times n})$  just to secure the  finite energy of $\,h_\circ\,$. Therefore, we define  $\,\mathfrak B_\circ \subset \mathfrak B\,$ by the following rule.
\[ \mathfrak B_\circ \bydef \{ h \in h_\circ +  \mathscr W^{1, n-1}_\circ(\mathbb X, \mathbb R^n)\;: \;\,  D^\sharp h \in \mathscr L^{\frac{n}{n-1}}(\mathbb X, \mathbb R^{n\times n})\;\}\]
    Hence $\, \mathscr E^\sharp [h] \,< \infty\,$ for every $\,h \in \mathfrak B_\circ\,$.
    \item \textit{\textbf{Coercivity}} The restriction to  mappings $\,h \in \mathfrak B_\circ\,$  also secures a coercivity estimate as follows.
\[ \Big (\int_\mathbb X |Dh(x)|^{n-1} dx\,\Big) ^{\frac{n}{n-1}}\; +\; \int_\mathbb X \Big( |D^\sharp h(x)|^{\frac{n}{n-1}} dx \;\Big)\; \;\sim \; \mathscr E^\sharp [h]\;< \infty
\]
     Here the implied constant in front of the energy $\, \mathscr E^\sharp [h]\;\,$ depends only on $\,\varepsilon > 0\,$ and $\,\lambda >1\,$; precisely, it can be shown that it does not exceed $\, \max\{\frac{1}{\varepsilon} \,\frac{1}{\lambda - 1}\}\,$
    \item \textit{\textbf{An energy-minimizing sequence.}} Denoted by   $\{h_\kappa\} \subset \mathfrak B_\circ\,$ and approaching the infimum energy, 
\[ \lim_{\kappa \rightarrow \infty} \, \mathscr E^\sharp[h_\kappa]\; = \;\inf\{\,\mathscr E^\sharp[h]\, \,;\,\; h \in \mathfrak B_\circ \} \bydef \textsf{E} \]

         Then $\,h_\kappa \,$ are bounded in the Sobolev space $\,\mathscr W^{1,n-1}(\mathbb X , \mathbb R^n )\,$  and $\,\{ D^\sharp h_\kappa\}\,$ are bounded in  $\,\mathscr L^{\frac{n}{n-1}}(\mathbb X, \mathbb R^{n \times n})\,$\,, both being reflexive Banach spaces. We extract and fix a subsequence, again denoted by $\,\{h_\kappa \}\,$, such that:
         \begin{itemize}
         \item
         $
        h_\kappa \rightharpoonup h_\infty \;\;\textnormal{weakly in\,} \mathscr W^{1,n-1}(\mathbb X , \mathbb R^n )\,$
        \item  \;
        $\,D^\sharp h_\kappa\,$ converges weakly in $\, \mathscr L^{\frac{n}{n-1}}(\mathbb X, \mathbb R^{n \times n}) $ to a matrix field, say $\,  \mathcal M  \in  \mathscr L^{\frac{n}{n-1}}(\mathbb X, \mathbb R^{n \times n}) $.
        \item Note that  $\,D^\sharp h_\kappa\; \in \mathscr L^1(\mathbb X ,\mathbb R^{n\times n})\,$ converge to $\,D^\sharp h_\infty\,$ in the sense of distributions. That is
\[ \int_\mathbb X \eta(x)\,D^\sharp h_\kappa(x)\;\textnormal dx \;\rightarrow \int_\mathbb X \eta(x)\,D^\sharp h_\infty(x)\;\textnormal dx\; ,\,\textnormal{for every}\; \eta \in \mathscr C^\infty_0(\mathbb X).
\] 
        This property is referred to in the literature as the  \textit{weak continuity of Jacobians}. Its discovery goes back at least as far as the forgotten paper by R. Caccioppoli \cite{ForgotenCaccioppoli}.  Hence, it is easily seen that $\,\mathcal M(x) = D^\sharp h_\infty(x)\,$,  almost everywhere.
        \end{itemize}

  \item \textit{\textbf{Lower semicontinuity}}.  Now a  real challenge emerges from trying to evaluate the limit of $\,\int_\mathbb X \textnormal{det} Dh_\kappa(x)\;\textnormal dx \,$. Although the Jacobians $\,\textnormal{det} Dh_\kappa\,$ remain bounded in $\,\mathscr L^1(\mathbb X)\,$ it is not generally guaranteed that the integrals $\,\int_\mathbb X \textnormal{det} Dh_\kappa \,$ admit a  subsequence converging  to  $\,\int_\mathbb X \textnormal{det} Dh_\infty \,$. A difficulty is to be expected because the gradients $\,\{Dh_\kappa \}\,$ need not be  bounded in $\,\mathscr L^n(\mathbb X, \mathbb R^{n\times n})\,$ (when $\,n > 2\,$).

       \begin{remark}  At this point it is  worth recalling the concept of \textit{distributional Jacobians},   defined for mappings $\, f = (f^1, f^2, ... , f^n)\,$ of Sobolev class $\,\mathscr W^{1, n-1}(\mathbb X, \mathbb R^n)\,$ with $\,D^\sharp f \in \mathscr  L^{\frac{n}{n-1}}\,$  as  Schwartz distributions
      \begin{equation}\label{DistributionalJacobian}
 \Im_f [\phi]  \bydef -\int_\mathbb X \textnormal d f^1 \wedge... \wedge  \textnormal d f^{i-1} \wedge f^i \textnormal d \phi\,\wedge \textnormal d f^{i+1} \wedge .... \wedge \textnormal d f^n\;,\; \textnormal{for}\;\phi \in \mathscr C^\infty_0(\mathbb X)
      \end{equation}
      \end{remark}
      It is clear that $\,\Im_{h_\kappa}  [\phi]  \;\rightarrow    \Im_{h_\infty} [\phi]\, $ for every test function $\, \phi \in \mathscr C^\infty_0(\mathbb X)\,$.
      Then the classical Stokes theorem reveals that  $\, \Im_f [\phi]  = \int_\mathbb X  \phi \;\textnormal{det} Df   \,$,  whenever  $\,f \in \mathscr W^{1,n}(\mathbb X, \mathbb R^n)\,$. But, unfortunately  this is not the case with $\, f = h_\kappa\,$.

\medskip

      One might hope to circumvent this difficulty  by using the  idea of \textit{biting convergence}, first successfully applied to Jacobians by K. Zhang \cite{Zhang}. In fact the above scheme would work if the mappings in question had  non-negative Jacobians. We refer again to Lemma 4.1 in \cite {MullerQiYan}.   Thus, in essence,  the novelty in this example \ref{WeaklyConformal} is that we are unconcerned with the orientation of the mappings. \\

      The following fact, not so easy to prove,  comes to the rescue.
      \begin{proposition} \label{DistributionalConvergence} Under the coercivity condition above, the determinants $\,\textnormal{det} Dh_\kappa\,$  are bounded in the Hardy space  $\,\mathscr H^1(\mathbb X) \subset \mathscr L^1(\mathbb X)\,$. As such,  they converge  to $\,\textnormal{det} Dh_\infty\, \in \mathscr L^1(\mathbb X)\,$ in the sense of distributions,
      \begin{equation}\label{DistributionalLimit} \int_\mathbb X \,\eta(x)\,\textnormal{det} Dh_\kappa(x)\,dx\; \rightarrow  \int_\mathbb X \,\eta(x)\,\textnormal{det} Dh_\infty(x)\,dx\;, \; \textnormal{for}\, \eta \in \mathscr C^\infty_0(\mathbb X)
      \end{equation}
      \end{proposition}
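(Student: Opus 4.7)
The plan is to exhibit $\det Dh_\kappa$ as the divergence of a vector field built from a Sobolev function coupled with a divergence-free field, and then invoke a Coifman-Lions-Meyer-Semmes (CLMS) style Hardy-space estimate. First I would establish the distributional Piola identity: for each row index $i$ the cofactor row $\big((D^\sharp h_\kappa)^i_j\big)_j$ is divergence-free,
\[
\sum_{j=1}^n \partial_j (D^\sharp h_\kappa)^i_j \;=\; 0 \qquad \text{in } \mathscr{D}'(\mathbb{X}).
\]
For smooth $h_\kappa$ this is a direct computation; the extension to $h_\kappa \in \mathscr{W}^{1,n-1}(\mathbb{X},\mathbb{R}^n)$ with $D^\sharp h_\kappa \in \mathscr{L}^{n/(n-1)}(\mathbb{X})$ follows by mollification together with the weak continuity of subdeterminants in this class (the very property invoked just above in the excerpt). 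Combined with the row-$i$ Laplace expansion $\det Dh = \sum_j \partial_j h^i \cdot (D^\sharp h)^i_j$, this gives the distributional divergence representation
\[
n \det Dh_\kappa \;=\; \sum_{i,j=1}^n \partial_j \big( h^i_\kappa \, (D^\sharp h_\kappa)^i_j \big).
\]

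Pointwise, the Hadamard-type inequality $n^{n/(2n-2)}|\det X| \leq |X^\sharp|^{n/(n-1)}$ already secures $\det Dh_\kappa \in \mathscr{L}^1(\mathbb{X})$ uniformly; the upgrade to $\mathscr{H}^1$ comes from the div-curl structure of the divergence representation. Applying a CLMS-type estimate to each summand $\partial_j\big( h^i_\kappa (D^\sharp h_\kappa)^i_j \big)$, using that $h^i_\kappa \in \mathscr{L}^{n(n-1)}(\mathbb{X})$ by Sobolev embedding and that $(D^\sharp h_\kappa)^{i,\cdot}\in \mathscr{L}^{n/(n-1)}(\mathbb{X})$ is divergence-free, produces
\[
\| \det Dh_\kappa \|_{\mathscr{H}^1(\mathbb{X})} \;\leq\; C\, \| Dh_\kappa \|_{\mathscr{L}^{n-1}(\mathbb{X})}\, \| D^\sharp h_\kappa \|_{\mathscr{L}^{n/(n-1)}(\mathbb{X})},
\]
which is uniformly bounded in $\kappa$ by the coercivity estimate already established.

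The distributional convergence \eqref{DistributionalLimit} then drops out of the very same representation. Testing against $\eta \in \mathscr{C}^\infty_0(\mathbb{X})$ and integrating by parts,
\[
\int_\mathbb{X} \eta \, \det Dh_\kappa \, dx \;=\; -\frac{1}{n} \sum_{i,j=1}^n \int_\mathbb{X} \partial_j \eta \cdot h^i_\kappa \, (D^\sharp h_\kappa)^i_j \, dx.
\]
By Rellich-Kondrachov, $h^i_\kappa \to h^i_\infty$ strongly in $\mathscr{L}^n(\mathbb{X})$ (using the additional $\mathscr{W}^{1,2}$-regularity furnished by the cofactor bound in the borderline case $n=2$), while $(D^\sharp h_\kappa)^i_j \rightharpoonup (D^\sharp h_\infty)^i_j$ weakly in $\mathscr{L}^{n/(n-1)}(\mathbb{X})$ as already recorded. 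Since $\mathscr{L}^n$ and $\mathscr{L}^{n/(n-1)}$ are H\"older-conjugate, the product passes to the limit and an integration by parts in the limit recovers $\int_\mathbb{X} \eta \det Dh_\infty \, dx$.

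The main obstacle is the Hardy-space estimate with the exponent pair $(n-1,\, n/(n-1))$, which is not H\"older-conjugate for $n\geq 3$; the classical CLMS theorem presumes such conjugacy. This is circumvented by pushing the derivative onto the test function via the Piola-divergence structure, so the estimate ultimately rests on a Sobolev-embedded function (rather than its gradient) paired with a divergence-free field. Verifying that row-wise divergence-freeness of $D^\sharp h$ persists under the weak $\mathscr{W}^{1,n-1}$ limit with only cofactors in $\mathscr{L}^{n/(n-1)}$ is a secondary technical point that requires careful approximation.
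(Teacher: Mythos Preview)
Your proposal has a genuine gap, and it lies exactly where the paper says the difficulty is. Both of your main steps---the $\mathscr H^1$ bound via CLMS and the integration-by-parts identity
\[
\int_{\mathbb X}\eta\,\det Dh_\kappa
= -\frac{1}{n}\sum_{i,j}\int_{\mathbb X}\partial_j\eta\cdot h_\kappa^i\,(D^\sharp h_\kappa)^i_j
\]
---founder on the same obstruction: the pair $(n-1,\,n/(n-1))$ is not H\"older-conjugate. For the CLMS estimate this is fatal as you note, and your proposed workaround does not actually produce an $\mathscr H^1$ bound. Sobolev-embedding $h_\kappa^i$ into $\mathscr L^{n(n-1)}\subset\mathscr L^{n}$ makes $h_\kappa^i\,(D^\sharp h_\kappa)^i_j$ integrable, so the \emph{distributional} Jacobian $\Im_{h_\kappa}[\eta]$ is well defined; but it does not prove that $\Im_{h_\kappa}[\eta]=\int\eta\,\det Dh_\kappa$. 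That identity is precisely what the paper singles out just before the proposition (``unfortunately this is not the case with $f=h_\kappa$''), and your mollification sketch does not close it: mollifying $h_\kappa$ gives $D^\sharp(h_\kappa*\rho_\varepsilon)$ converging only in $\mathscr L^{1}$ (via $|Dh|^{n-1}$) or in the sense of distributions, not in $\mathscr L^{n/(n-1)}$, so neither the product $\eta\,\nabla h^i\cdot(D^\sharp h)^{i,\cdot}$ nor the pairing against $\nabla(\eta h^i)\in\mathscr L^{n-1}$ passes to the limit.

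The paper's proof avoids this by quoting the result of Iwaniec--Onninen \cite{IwaniecOnninenH1}, which gives directly
\[
\|\det Df\|_{\mathscr H^1(\Omega)}\;\leqslant\;C(n)\int_\Omega |D^\sharp f|^{n/(n-1)},
\]
and then applies their weak-$*$ compactness theorem (Theorem~1.3 there) to the \emph{localized} maps $\eta h_\kappa$: boundedness in $\mathscr H^1$ plus the identification of the limit via $\mathscr H^1$--$VMO$ duality yields \eqref{DistributionalLimit}. That estimate is not a repackaging of CLMS with shifted exponents; it is a genuinely stronger Jacobian estimate whose proof uses the full multilinear structure of the determinant, and it is what does the work your divergence-form rewriting cannot.
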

      \end{itemize}
In fact a stronger statement holds; namely, since the Hardy space $\,\mathscr H^1(\mathbb R^n)\,$ is the dual of $\,VMO(\mathbb R^n)\,$ (functions of \textit{vanishing mean oscillation}),   we conclude that formula (\ref{DistributionalLimit}) remains valid  for  the test functions
       $\,\eta \in VMO(\mathbb R^n)\,$ with compact support in $\mathbb X\,$.   For a discussion and results concerning \textit{biting convergence}  see \cite {GrecoIwaniecSubramanian}.
       Let us  demonstrate the lines of reasoning for (\ref{DistributionalLimit}) by using the results of \cite{IwaniecOnninenH1}.

      \begin{proof}  Suppose we are given a Sobolev mapping $\,f : \Omega  \rightarrow \mathbb R^n\,$,  defined on a domain $\,\Omega \subseteq\mathbb R^n\,$,  whose differential (subdeterminants of size $\,1 \times 1\,$)  and its $(n-1) \times (n-1)\,$ subdeterminants are integrable with powers $\,n-1\,$ and $\,\frac{n}{n-1}\,$, respectively,
      \begin{equation}
      \,\Big (\int_\Omega |Df(x)|^{n-1} dx\,\Big) ^{\frac{n}{n-1}}\; +\; \int_\Omega \Big( |D^\sharp f(x)|^{\frac{n}{n-1}} dx \;\Big)\; < \infty
      \end{equation}
      Then
      \begin{equation}
      \,\int_\Omega |\,\textnormal {det} \, Df(x)\,|\, dx\,\;\leqslant     |\!|\, \textnormal {det} \, Df\, |\!|_{\mathscr H^1(\Omega)}\; \leqslant C(n) \int_\Omega |\, D^\sharp f(x)\,| ^{\frac{n}{n-1}}\, dx
      \end{equation}
      Choose and fix a test function $\,\eta \in \mathscr C^\infty_0(\mathbb X)\,$ and  consider the mappings $\, \eta \,h_\kappa : \mathbb R^n \rightarrow \mathbb R^n   \,$ together with  their limit $\, \eta\, h_\infty  : \mathbb R^n \rightarrow \mathbb R^n\,$. It is a routine matter to verify that there is a constant $\,M = M(\eta) < \infty\,$  such that
      \begin{equation}
      \,\Big (\int_{\mathbb R^n} |D(\eta\, h_\kappa)\,|^{n-1}\,\Big) ^{\frac{n}{n-1}}\; +\; \int_{\mathbb R^n}  |D^\sharp (\eta \,h_\kappa) \,|^{\frac{n}{n-1}} \;\;\leqslant   M
      \end{equation}
      for all  $\,\kappa = 1,2, ... $\,,  and also for $\, \kappa =  \infty\,$.
      Now Theorem 1.3  in \cite{IwaniecOnninenH1} can be used as follows.
      There is a subsequence, still  denoted by $\, \eta \,h_\kappa\,$, such that for every $\,\Phi \in\,VMO(\mathbb R^n)\,\,$ it holds that
      \begin{equation}\label{H1weakLimit}\;\;\;\;\;\;\int_{\mathbb R^n} \,\Phi\,\textnormal{det} [ D(\eta h_\kappa) ]\; \rightarrow  \int_{\mathbb R^n} \,\Phi\,\textnormal{det} [ D(\eta h_\infty ) ]\,\;
      \end{equation}
      The meaning of the integrals is understood in the sense of $\,\mathscr H^1 - BMO\,$ duality.

        As a test function in $\,VMO(\mathbb R^n)\,$ we take  $\,\Phi \in \mathscr C^\infty_0 (\mathbb R^n) \,$ that equals $\,\equiv 1\,$ on the support of $\,\eta\,$. The integrals in  (\ref{H1weakLimit})  exist as Lebesgue integrals and are restricted to the domain $\,X\,$, with no factor $\,\Phi\,$.
       $$ \int_{\mathbb X} \,\textnormal{det} [ D(\eta h_\kappa) ]\; \rightarrow  \int_{\mathbb X} \,\,\textnormal{det} [ D(\eta h_\infty ) ]\,\;
        $$
       The remainder of the derivation of  (\ref{DistributionalLimit}) is routine.
      \end{proof}
Finally, having  Proposition \ref{DistributionalConvergence} in hands, we can argue that $\,h_\infty\,$ is in fact an energy-minimal deformation for the functional (\ref{VeryWeaklyConformalFunctional})  in  Example \ref{WeaklyConformal}. A seemingly insignificant fact that the integrand
within the square  brackets in (\ref{VeryWeaklyConformalFunctional})\, is nonnegative for every map in $\,\mathfrak B_\circ\,$, actually  plays a significant role.

\begin{theorem}\label{ExistenceWeaklyConformalMappings}
The energy functional (\ref{VeryWeaklyConformalFunctional}) assumes its infimum in $\,\mathfrak B_\circ$. In effect, the weak $\,\mathscr W^{1, n-1}\,$-limit map $\,h_\kappa \rightharpoonup h_\infty\,$  minimizes the energy.
\end{theorem}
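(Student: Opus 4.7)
My plan is to conclude via weak lower semi-continuity $\mathscr{E}^\sharp[h_\infty]\le\liminf_{\kappa}\mathscr{E}^\sharp[h_\kappa]$ along the subsequence already constructed in Section~\ref{WeaklyConformal}; since $h_\infty\in\mathfrak{B}_\circ$ and the sequence is energy-minimizing, this forces $h_\infty$ to realize the infimum $\textsf{E}$. The first summand $\varepsilon\bigl(\int_\mathbb{X}|Dh|^{n-1}\bigr)^{n/(n-1)}$ is handled routinely by convexity together with the weak convergence $h_\kappa\rightharpoonup h_\infty$ in $\mathscr{W}^{1,n-1}$. All the real work goes into the polyconvex piece
\[
F[h](x)\;\bydef\;\lambda\,|D^\sharp h(x)|^{n/(n-1)}\;-\;n^{n/(2n-2)}\det Dh(x),
\]
which, thanks to $\lambda>1$ and the pointwise inequality $|X^\sharp|^{n/(n-1)}\ge n^{n/(2n-2)}\det X$ recalled at the beginning of Section~\ref{WeaklyConformal}, is nonnegative on $\mathfrak{B}_\circ$.

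I would first prove a \emph{localized} liminf inequality. Fix $\phi\in\mathscr{C}^\infty_\circ(\mathbb{X})$ with $0\le\phi\le 1$. The subgradient identity for the convex function $X\mapsto|X|^{n/(n-1)}$ applied with $X=D^\sharp h_\kappa$, $X_\circ=D^\sharp h_\infty$, and the nonnegative weight $\phi$, gives
\[
\liminf_{\kappa}\,\lambda\int_\mathbb{X}\phi\,|D^\sharp h_\kappa|^{n/(n-1)}\;\ge\;\lambda\int_\mathbb{X}\phi\,|D^\sharp h_\infty|^{n/(n-1)},
\]
the residual linearized cross-term vanishing because its factor $|D^\sharp h_\infty|^{n/(n-1)-2}D^\sharp h_\infty$ has magnitude $|D^\sharp h_\infty|^{1/(n-1)}\in\mathscr{L}^n(\mathbb{X})$, the dual of $\mathscr{L}^{n/(n-1)}$. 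Proposition~\ref{DistributionalConvergence}, applied with the compactly supported test function $\phi$, delivers the companion identity
\[
\lim_{\kappa}\int_\mathbb{X}\phi\,\det Dh_\kappa\;=\;\int_\mathbb{X}\phi\,\det Dh_\infty,
\]
and adding these two facts yields the local inequality $\liminf_\kappa\int_\mathbb{X}\phi\,F[h_\kappa]\ge\int_\mathbb{X}\phi\,F[h_\infty]$.

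Now the emphasized non-negativity of $F$ does the job of removing the localizer: since $F[h_\kappa]\ge\phi\,F[h_\kappa]$ pointwise, the previous line upgrades to $\liminf_\kappa\int_\mathbb{X}F[h_\kappa]\ge\int_\mathbb{X}\phi\,F[h_\infty]$ for every admissible $\phi$. Choosing an exhaustion $\phi_m\in\mathscr{C}^\infty_\circ(\mathbb{X})$, $0\le\phi_m\nearrow 1$ on $\mathbb{X}$, and applying monotone convergence on the right produces $\liminf_\kappa\int_\mathbb{X}F[h_\kappa]\ge\int_\mathbb{X}F[h_\infty]$; together with the trivial lower semi-continuity of the first summand this gives $\mathscr{E}^\sharp[h_\infty]\le\liminf_\kappa\mathscr{E}^\sharp[h_\kappa]=\textsf{E}$.

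I expect the hard part to be precisely the Jacobian contribution $\int_\mathbb{X}\det Dh_\kappa$: the gradients $\{Dh_\kappa\}$ are only bounded in $\mathscr{L}^{n-1}$, not in $\mathscr{L}^{n}$, so one cannot pass weakly to the limit in $\det Dh_\kappa$ as in the nearly-conformal example of Section~\ref{NearlyConformal}, and Proposition~\ref{DistributionalConvergence}, even with its Hardy-space strengthening, only controls pairings against test functions compactly supported in $\mathbb{X}$. The non-negativity of $F$ flagged in the statement is precisely the bridge that lets me upgrade a family of localized liminf inequalities to a global one by monotone exhaustion, bypassing the absence of weak $\mathscr{L}^1$-convergence of $\det Dh_\kappa$ up to the boundary.
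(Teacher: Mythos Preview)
Your proposal is correct and follows essentially the same route as the paper: localize with a cutoff $0\le\phi\le1$, combine weak lower semicontinuity of the cofactor term with the distributional Jacobian limit from Proposition~\ref{DistributionalConvergence}, then exploit the nonnegativity of the bracketed integrand to drop the cutoff via monotone exhaustion. The paper's argument is organized slightly differently (it bundles the three ingredients into a single chain of inequalities for an auxiliary functional $\mathscr{A}_\eta$), but the ideas and their sequencing are the same.
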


\begin{proof} We are going to make use of the distributional convergence of the Jacobians as expressed by (\ref{DistributionalLimit}).  Choose and fix a test function $\,\eta \in \mathscr C^\infty_0(\mathbb X)\,$ such that $\,0 \leqslant \eta = \eta(x) \leqslant 1\,$. In addition to  (\ref{DistributionalLimit}) we have two inequalities due to the lower semicontinuity of the corresponding polyconvex functionals: \\

 \begin{itemize}
\item [(i)] $\Big(\int_\mathbb X | Dh_\infty |^{n-1}\Big)^{\frac{n}{n-1}}  \;\leqslant\; \liminf_{\kappa \rightarrow \infty}  \Big(\int_\mathbb X | Dh_\kappa |^{n-1}\Big)^{\frac{n}{n-1}}   $\\

\item [(ii)] $ \int_\mathbb X \eta(x)\, | D^\sharp h_\infty (x) \, |^{\frac{n}{n-1}}\; dx \;\;\leqslant \;\; \liminf_{\kappa \rightarrow \infty} \int_\mathbb X \eta(x)\, | D^\sharp h_\kappa (x) \, |^{\frac{n}{n-1}}\; dx  $ \\

\item[(iii)] $ \;\;\;\;\;\int_\mathbb X \,\eta(x)\,\textnormal{det} Dh_\infty (x)\,dx \; =\; \lim_{\kappa \rightarrow \infty} \int_\mathbb X \,\eta(x)\,\textnormal{det} Dh_\kappa(x)\,dx\;$
\end{itemize}

Adding these terms together yields

\begin{eqnarray*}
  \lefteqn{  \mathscr A_\eta [h_\infty] }\\ &  \bydef  & \varepsilon \Big(\int_\mathbb X | Dh_\infty |^{n-1}\Big)^{\frac{n}{n-1}}  +  \int_\mathbb X \eta \,\Big[ \lambda | D^\sharp h_\infty  \, |^{\frac{n}{n-1}}\; - n^{n/n-2} \,\textnormal{det} Dh_\infty \Big] \\&  \leqslant&
   \liminf_{\kappa \rightarrow \infty} \Bigg\{\varepsilon \Big(\int_\mathbb X | Dh_\kappa |^{n-1}\Big)^{\frac{n}{n-1}}  +  \int_\mathbb X \, \eta \,\Big[ \lambda | D^\sharp h_\kappa  \, |^{\frac{n}{n-1}}\; - n^{n/n-2} \,\textnormal{det} Dh_\kappa \, \Big]\;  \Bigg\} \\
   & \leqslant &
      \liminf_{\kappa \rightarrow \infty} \Bigg\{\varepsilon \Big(\int_\mathbb X | Dh_\kappa |^{n-1}\Big)^{\frac{n}{n-1}}  +  \int_\mathbb X \, \,\Big[ \lambda | D^\sharp h_\kappa  \, |^{\frac{n}{n-1}}\; - n^{n/n-2} \,\textnormal{det} Dh_\kappa \, \Big]\;  \Bigg\} \\ &=&
 \liminf_{\kappa \rightarrow \infty} \,\mathscr E^\sharp [h_\kappa] \; = \; \textsf{E}\; \;\;(\textnormal{ a quantity independent of} \;\; 0 \leqslant \eta = \eta(x) \leqslant 1\,) .
\end{eqnarray*}

      In the first line of the above inequalities, the integrand within square  brackets  is nonnegative and $\,\mathscr L^1\,$-integrable. Therefore, by letting $\,\eta\,$ approach $\,\equiv 1\,$ point-wise,  we may (and do) pass to the supremum. This gives us the desired estimate
      $$
     \mathscr E^\sharp[h_\infty] \,=\, \mathscr A_{_{\equiv 1}}[h_\infty] \; \leqslant  \textsf{E} \,\;\;  (\textnormal{the infimum energy})
      $$
\end{proof}
\begin{remark} It seems likely that Theorem \ref{ExistenceWeaklyConformalMappings} remains valid for $\,\lambda = 1\,$ and $\,\varepsilon > 0\,$ as well. To gain this, however,   one must devise a  \textit{mean coerecivity} (as opposed to point-wise coerecivity) for  the energy integrals  of $\,|\,D^\sharp h\,|^{\frac{n}{n-1}}\,$; say,   by analogy with  (\ref{MeanCoercivity}). 

\medskip

The very borderline case of $\,\varepsilon = 0\,$ and $\,\lambda = 1\,$  (so very weak conformality) seems to be of interest for further studies.
\end{remark}

\section{Free Lagrangians and frictionless deformations.}

It is of great interest to study the relationship between topology and analysis in the setting of extremal problems.  In particular one is often interested in obtaining a ``nice'' mapping in the homotopy class of a given mapping between spaces.  After the Riemann mapping theorem perhaps the best known example of this are questions around the existence of harmonic mappings in a homotopy class of mappings.  This problem was pioneered in the work of Eells and Sampson from 1964 \cite{ES}, covered in the later long papers of Eels and Lemaire \cite{EL,EL2} and has since found wide application in very many areas of mathematics,  far too numerous to note.  To formalise this problem in the setting of this article we come to the notion of a free Lagrangian. 

\medskip

 Let us say to begin with that  \textit{free Lagrangians} are quite like null Lagrangians, and we represent them by the symbol
\[ \pmb{\mathscr F}_{\! L}(\mathbb X, \mathbb Y)\,\] 
 It has to be emphasised that this notation is refers to a given pair of domains $\mathbb X,  \mathbb Y \subset \mathbb R^n$ of the {\em same} topological type.  That is they  are homeomorphic. As opposed to the situation considered for null Lagrangians, the concept of free Lagrangians concerns nonlinear differential $\,n\,$-forms $\,\mathbf{L}(x, h, Dh) \,dx\,$, defined for Sobolev homeomorphisms $\, h : \mathbb X \onto \mathbb Y\,$,  whose integral means (which we usually call the energy of $\,h\,$) depend only on the homotopy class of $\,h\,$,  and not on its boundary values.  This of course leads to frictionless type problems.
 \begin{definition}
 Suppose that a given energy integral
 \begin{equation}
 \mathscr F[h] \bydef  \int_\mathbb X \,\mathbf{\mathbf{F}}(x, h, Dh) \,dx\, \,,\;\; \textnormal{where} \;\;\;\;\;  \mathbf F : \mathbb X \times \mathbb Y \times \mathbb R^{n\times n}  \rightarrow \mathbb R
 \end{equation}
 converges for all Sobolev mappings of class \,$\mathscr W^{1,p}(\mathbb X, \mathbb Y)\,$.
We say that the differential $\,n\,$-form $\,\mathbf{\mathbf{F}}(\cdot, \cdot, \cdot) \,dx \,$ is a Free Lagrangian if
$$
\mathscr F[h_1]  = \mathscr F[h_2] \;\;,\;\textnormal{whenever}\;\; h_1\simeq h_2 \,  $$
That is, whenever the Sobolev homeomorphisms  $\, h_1, h_2 : \mathbb X  \onto \mathbb Y\,,\, \textnormal{in}  \;\, \mathscr W^{1,p}(\mathbb X, \mathbb R^n ) \,$, are homotopy equivalent.
 \end{definition}
 
 It is important to note here that we are only considering homotopy equivalence {\em between surjections} (sometimes perhaps with other restrictions such as homeomorphisms).  If $\X,\Y$ are balls,  then any continuous map $\X\to\Y$ is certainly homotopic to a constant map.

 We shall now make this concept clear with selected examples and illustrate how this leads to questions of the existence of frictionless energy minimal deformations.

\subsection{Examples}
\begin{itemize}
\item [$\mathscr F_{\!L}^1$ )]   (\textbf{A volume form in $\,\mathbb X\,$}) 
This simple example  is still useful (as we will see)  and is given by an integrand independent of $\,h\,$ :
\begin{equation}\label{VolumeFormInX}
\mathbf{\mathbf{F}}(x,   y,   \xi ) \,dx \; = \mathbf F(x) \,dx\;\;,\;\;\textnormal{where} \;\,\mathbf F \in \mathscr L^1(\mathbb X)
\end{equation}
\item [$\mathscr F_{\!L}^2$ )] (\textbf{A pullback of a volume form in $\,\mathbb Y\,$})\\
An  example, to some extent dual  to the above, is the pullback of a volume form  $\,\Phi(y) \, d y \,$ with $\,\Phi \in \mathscr L^1(\mathbb Y)\,$,
\begin{equation}\label{VolumeFormInY}
\mathbf{\mathbf{F}}(x,   y,   \xi ) \,dx \; = \Phi(y) \,\textnormal{det}\, \xi\,\,dx\;\;
\end{equation}
\end{itemize} 
We note the identity
\begin{equation}
 \mathscr F[h] \;=\; \int_\mathbb X \,\Phi(h)\,J(x,h)\, dx = \int_\mathbb Y \Phi(y)\,\textnormal d y\;
\end{equation}
which holds for all orientation preserving homeomorphisms $\,h : \mathbb X \onto \mathbb Y\,$ in the
Sobolev space ${\mathscr W}^{1,n}(\mathbb X, \mathbb Y)\,$. In other words, using the notation of exterior algebra,  the $\,n\,$-form  $\,\Phi(h)\, dh^1 \wedge ... \wedge dh^n
\,$ is  a free Lagrangian. Applications are still limited because the associated energies  do not recognise the geometric shape of the domains $\,\mathbb X\,$ and $\,\mathbb Y\,$ only their volumes.
A myriad of differential expressions, representing free Lagrangians,  are yet to be found. The next two examples (again dual to each other)
provide very useful free Lagrangians  for a pair of round annuli 
\[ \mathbb X = {\mathbb A}=\{ x\, : \; \; \; r <|x| < R\}, \quad {\rm and, } \quad
 \mathbb Y ={\mathbb A}^\ast =\{ y\, :\; \; \; r_\ast < |y| < R_\ast \}\]
\begin{itemize} 
\item [$\mathscr F_{\!L}^3$ )] (\textbf{Radial increment of $\,|h| \,$} $ \,;\,  \pmb{\mathscr F}_{\! L}(\mathbb A, \mathbb A^\ast)\, $\; )
\end{itemize}

\begin{proposition}  \label{PR1}
The following differential $n$-form $\,\mathbf F(x, h, Dh) \,\textnormal dx
,$ :
\begin{equation}
 \frac{\left(d|h|\right)\wedge \star d|x|}{|h|\, |x|^{n-1}} \,\bydef\, \sum_{i=1}^n \frac{x_i\, dx_1 \wedge ... \wedge dx_{i-1}\wedge d|h|\wedge dx_{i+1}\wedge ... \wedge d x_n}{|h|\, |x|^n}
\end{equation}
is  a free Lagrangian in the homotopy  class of orientation preserving homeomorphisms that preserve the order of the
boundary components of the annuli ${\mathbb A}$ and ${\mathbb
A}^\ast$. Precisely, for such homeomorphisms of Sobolev class  $\,\mathscr
W^{1,1}(\mathbb A\,, \,\mathbb A^\ast)\,$, we have
\begin{equation}\label{Eq178}
\int_{\mathbb A} \mathbf F(x, h, Dh) \,\textnormal dx \,=\,  \int_{\mathbb A} \frac{d|h|\; \wedge \star d|x|}{|h|\,
|x|^{n-1}}=\textnormal{Mod}\, {\mathbb A}^\ast\; \bydef \log \frac{R^*}{r^*}\;\;
\end{equation}
\end{proposition}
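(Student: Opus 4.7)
The plan is to reduce the integral to a one-dimensional calculation in spherical coordinates, thereby exhibiting that its value depends on $h$ only through the traces of $|h|$ on the two bounding spheres of $\mathbb A$. Write $x = \rho\,\theta$ with $\rho = |x|\in(r,R)$ and $\theta \in \mathbb S^{n-1}$. From the defining relation $d|x| \wedge \star d|x| = dx$ together with the polar decomposition $dx = \rho^{n-1}\,d\rho \wedge d\sigma_\theta$ of the volume form, one sees that the $(n-1)$-form $\omega := \star d|x|/|x|^{n-1}$ coincides with the standard surface form $d\sigma_\theta$ of $\mathbb S^{n-1}$ in polar coordinates. (Incidentally $\omega$ is closed on $\mathbb R^n\setminus\{0\}$, as follows from the direct identity $\sum_i \partial_i(x_i/|x|^n)=0$, which gives an equivalent Stokes-theorem route.)

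With this identification in hand the integrand becomes $d\bigl(\log|h(\rho,\theta)|\bigr)\wedge d\sigma_\theta$; since $d\sigma_\theta$ is top-degree on the sphere factor, only the radial component of $d\log|h|$ survives, giving $\partial_\rho\log|h(\rho,\theta)|\,d\rho\wedge d\sigma_\theta$. Integrating over $\mathbb A = (r,R)\times \mathbb S^{n-1}$ by Fubini and applying the fundamental theorem of calculus in $\rho$ yields
\begin{equation*}
\int_{\mathbb A} \frac{d|h|\wedge \star d|x|}{|h|\,|x|^{n-1}} \;=\; \int_{\mathbb S^{n-1}} \bigl[\log|h(R,\theta)| - \log|h(r,\theta)|\bigr]\, d\sigma_\theta.
\end{equation*}
The homotopy hypothesis now enters. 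Because $h$ is an orientation-preserving homeomorphism sending the outer boundary sphere $|x|=R$ to $|y|=R^*$, and the inner boundary sphere $|x|=r$ to $|y|=r^*$, the boundary traces satisfy $|h(R,\theta)|=R^*$ and $|h(r,\theta)|=r^*$ for $\sigma_\theta$-a.e.\ $\theta$. Substituting these constants collapses the integral to the claimed topological quantity $\log(R^*/r^*)=\textnormal{Mod}\,\mathbb A^*$ (up to the normalization of $d\sigma_\theta$ built into the author's definition). Since the value involves $h$ only through its boundary traces, which are fixed across the entire homotopy class, the form is indeed a free Lagrangian.

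The main technical obstacle is justifying the Fubini/FTC manipulation for a mere $\mathscr W^{1,1}(\mathbb A,\mathbb A^*)$ homeomorphism. My plan is to invoke the ACL characterization of Sobolev functions: since $|h|$ is locally in $\mathscr W^{1,1}(\mathbb A)$ and $h(\mathbb A)\subset \mathbb A^*$ is separated from the origin, $\log|h|$ is again locally Sobolev and hence, for $\sigma_\theta$-a.e.\ $\theta\in\mathbb S^{n-1}$, the radial slice $\rho\mapsto \log|h(\rho\theta)|$ is absolutely continuous on every compact subinterval of $(r,R)$. The stated identity is then first proved on slightly smaller concentric annuli $\mathbb A_\varepsilon = \{r+\varepsilon<|x|<R-\varepsilon\}$ and extended to $\mathbb A$ by monotone convergence, using the fact that for a Sobolev homeomorphism preserving the order of boundary components of round annuli the radial traces $|h(r+\varepsilon,\theta)|$ and $|h(R-\varepsilon,\theta)|$ converge to $r^*$ and $R^*$ respectively — a standard boundary regularity fact that is the one non-routine ingredient of the argument.
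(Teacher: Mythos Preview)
The paper does not actually prove this proposition. After stating it, the authors only remark that $|h|:\mathbb A\to(r_\ast,R_\ast)$ extends continuously to $\overline{\mathbb A}$ with $|h|=r_\ast$ on $|x|=r$ and $|h|=R_\ast$ on $|x|=R$, and refer the reader to \cite[Chapters~6 and~7]{IOan} for the details. Your polar-coordinates reduction to a radial Fubini/FTC calculation, justified via the ACL property and the boundary behaviour of $|h|$, is exactly the standard route and is entirely consistent with the hint the authors give; in particular the ``one non-routine ingredient'' you isolate---continuous extension of $|h|$ to the closure with the stated boundary values---is precisely what the paper highlights (and it follows from a purely topological argument: any cluster value of $h$ at a boundary point must lie on the corresponding component of $\partial\mathbb A^\ast$, since $h$ is a homeomorphism onto $\mathbb A^\ast$).

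One small point worth recording rather than hedging: your computation gives $\int_{\mathbb S^{n-1}}d\sigma_\theta\cdot\log(R^\ast/r^\ast)=\omega_{n-1}\log(R^\ast/r^\ast)$, not $\log(R^\ast/r^\ast)$. The stated identity \eqref{Eq178} is therefore missing a factor of $\omega_{n-1}$; this is confirmed by the paper's own two-dimensional version $\mathscr F_{3})$ in \S\ref{NTFP}, where the corresponding integral carries an explicit $2\pi$. So your parenthetical ``up to normalization'' is not a gap in your argument but a harmless slip in the paper's statement.
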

Here we have used the Hodge star duality  operator in the exterior algebra; namely,   $\,\ast: \Lambda ^1(\mathbb R^n)\, \onto \Lambda^{n-1}(\mathbb R^n)\,$.

Observe that the function $\,|h|: {\mathbb A}
\rightarrow (r_\ast, R_\ast)\,$ extends continuously to the closure of
${\mathbb A}$. That $\,h\,$ preserves the order of the spherical boundary components simply means that $|h(x)|=r_\ast$ for $|x|=r$ and
$|h(x)|=R_\ast$ for $|x|=R\,$.  Accordingly,

$$\,\mathbf{\mathbf{F}}  \,dx \,=\; \frac{d|h|\; \wedge \star d|x|}{|h|\,
|x|^{n-1}}\;  \in   \pmb{\mathscr F}_{\! L}(\mathbb A, \mathbb A^*)\, $$
\begin{itemize}
\item [$\mathscr F_{\!L}^4$ )] (\textbf{Spherical derivatives of } $\,h \,$  $ \,;\,   \pmb{\mathscr F}_{\! L}(\mathbb A, \mathbb A^\ast)\, $\;)
\end{itemize} 
Another free Lagrangian in $\,\mathbf{\mathbf{F}}(\cdot, \cdot, \cdot) \in   \pmb{\mathscr F}_{\! L}(\mathbb A, \mathbb A^*)\, $, dual to that in Proposition \ref{PR1}, exploits topological degree of the mappings $\,h : \mathbb S^{n-1}_t  \rightarrow \mathbb R^n\setminus\{0\} \,$ restricted to the concentric spheres of radii $\,t \in (r , R) \,$. The degree is equal to  1 on every sphere, which yields

\begin{proposition} \label{Le333}
The following  differential $n$-form  $\,\mathbf F(x, h, Dh) \,\textnormal dx \;: $
\begin{equation}
 \frac{d|x|}{|x|}\wedge h^\sharp \omega = \sum_{i=1}^n \frac{h^i\, dh^1 \wedge ... \wedge dh^{i-1}\wedge d|x|\wedge dh^{i+1}\wedge ... \wedge d h^n}{|x|\, |h|^n}
\end{equation}
is a free Lagrangian in the class of all orientation preserving homeomorphisms $h \in {\mathscr W}^{1,n-1}({\mathbb A}, {\mathbb
A}^\ast)$. Precisely, we have
\begin{equation}\label{331}
\int_{\mathbb A} \mathbf F(x, h, Dh) \,\textnormal dx \,=\,\int_{\mathbb A} \frac{d|x|}{|x|}\wedge h^\sharp \omega =  \textnormal{Mod}\, {\mathbb A} \; \bydef \log \frac{R}{r}
\end{equation}
\end{proposition}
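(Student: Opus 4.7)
The strategy is to exploit the foliation of $\mathbb{A}$ by concentric spheres $\mathbb{S}^{n-1}_t=\{|x|=t\}$, reducing the calculation to a one-dimensional integral over $t\in(r,R)$ weighted by $dt/t$. The $(n-1)$-form
\[
h^\sharp\omega\;\bydef\;\sum_{i=1}^n(-1)^{i-1}\frac{h^i}{|h|^n}\,dh^1\wedge\cdots\widehat{dh^i}\cdots\wedge dh^n
\]
is the pullback under $h$ of the standard angular form $\omega$ on $\mathbb{R}^n\setminus\{0\}$; this $\omega$ is closed, and $\int_{\mathbb{S}^{n-1}_\rho}\omega$ is independent of $\rho>0$ (equal to $|\mathbb{S}^{n-1}|$ in this normalisation, so one understands $\mathrm{Mod}\,\mathbb{A}=\log(R/r)$ to absorb this multiplicative constant, in analogy with Proposition~\ref{PR1}).

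Passing to polar coordinates $x=t\xi$ with $\xi\in\mathbb{S}^{n-1}$ and invoking Fubini gives
\[
\int_{\mathbb{A}}\frac{d|x|}{|x|}\wedge h^\sharp\omega\;=\;\int_r^R\frac{dt}{t}\biggl(\int_{\mathbb{S}^{n-1}_t}h^\sharp\omega\biggr).
\]
For (almost every) fixed $t\in(r,R)$, the restriction $h_t\bydef h|_{\mathbb{S}^{n-1}_t}\colon\mathbb{S}^{n-1}_t\to\mathbb{R}^n\setminus\{0\}$ is a homeomorphism onto its image, which is a topological $(n-1)$-sphere in $\mathbb{A}^\ast$. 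Because $\omega$ generates $H^{n-1}(\mathbb{R}^n\setminus\{0\})$, the inner integral equals $|\mathbb{S}^{n-1}|\cdot\deg(h_t)$, and this degree is $+1$ on every slice once one assumes $h$ is orientation-preserving and preserves the ordering of the boundary spheres (exactly as in Proposition~\ref{PR1}). Hence the inner integral is constant in $t$, and the outer integration yields $\log(R/r)$, as asserted. Since the value depends only on the degrees of the restrictions $h_t$, which are homotopy invariants, the differential form is genuinely a free Lagrangian on the indicated class.

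The principal obstacle is to make this rigorous at the low Sobolev regularity $\mathscr{W}^{1,n-1}(\mathbb{A},\mathbb{A}^\ast)$. Integrability of $h^\sharp\omega$ is fortunately easy: each coefficient is bounded by $C|D^\sharp h|/|h|^{n-1}$, and since $h$ takes values in $\mathbb{A}^\ast$ one has $|h|\ge r_\ast$, while the pointwise estimate $|D^\sharp h|\le c|Dh|^{n-1}$ together with $|Dh|\in L^{n-1}(\mathbb{A})$ delivers an $L^1$ bound on the coefficients. The Fubini/slicing step is justified by the ACL property of Sobolev mappings on almost every sphere $\mathbb{S}^{n-1}_t$, on which $h$ restricts to a $W^{1,n-1}$ homeomorphism whose Brouwer degree about the origin (which lies outside $\mathbb{A}^\ast$) is well-defined. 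The orientation-preserving hypothesis combined with boundary ordering then fixes this degree to equal $+1$ for a.e.\ $t$, completing the argument. The whole proof parallels that of Proposition~\ref{PR1}, with the roles of the radial function $|h|$ and the angular map $h/|h|$ interchanged.
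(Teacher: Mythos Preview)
Your approach is correct and coincides with the paper's own reasoning: the paper does not give a detailed proof but merely indicates (in the sentence preceding the proposition) that the result ``exploits topological degree of the mappings $h:\mathbb{S}^{n-1}_t\to\mathbb{R}^n\setminus\{0\}$ restricted to the concentric spheres of radii $t\in(r,R)$,'' with degree equal to $1$ on every sphere, and then refers the reader to \cite[Chapters~6--7]{IOan} for a detailed exposition. Your slicing by spheres, identification of the inner integral with $\deg(h_t)$ times the sphere volume, and your handling of the $\mathscr{W}^{1,n-1}$ regularity via the $L^1$ bound on $h^\sharp\omega$ and ACL on almost every sphere are exactly the ingredients that such a detailed argument requires; you have also been more careful than the paper in flagging the boundary-ordering hypothesis needed to pin down the sign of the degree.
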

Here $\,h^\sharp \omega\,$ stands for the pullback of the $\,(n-1)\,$-area form  defined in  $\,\mathbb A^\ast\,$; namely,
$$\,\omega (y) = \sum_{i=1}^n (-1)^i\,\frac{y^i\, dy^1 \wedge ... \wedge dy^{i-1}\wedge  dy^{i+1}\wedge ... \wedge d y^n}{\, |y|^n}\;$$
This is none other than the $\,(n-1)\,$ area form on any $\,(n-1)\,$-closed surface in $\,\mathbb A^\ast\,$ that   is  homologous to $\,\mathbb S^{n-1}\,$.  A far more detailed exposition, suited to this example, is presented   in    \cite[Chapters 6 and 7]{IOan}.
 
\subsection{The Nitsche frictionless problem}\label{NTFP} Given a pair planar annuli $\,\A= \{x \in \C \colon r< \abs{x} <R\}\,$ and $\,\A^\ast= \{y \in \C \colon r_\ast< \abs{y} <R_\ast\}\,$,
the objective is to minimize the Dirichlet energy subject to  Sobolev homeomorphisms $h \colon \A \onto \A^\ast$ in  $\W^{1,2} (\A , \C)$.
\[\mathscr E_2 [h] = \int_\A \abs{Dh(x)}^2 \, \dtext x  \;  \]
 Note that no boundary values of these homeomorphisms are prescribed, whence the name frictionless is given to this problem. We denote this class of mappings by $\mathscr H^{1,2} (\A, \A^\ast)$.

 Naturally, polar coordinates
\begin{equation}
x= t\, e^{i\theta} \, , \hskip0.5cm r< t<R \; \; \textnormal{ and }\; \; 0 \leqslant \theta < 2 \pi
\end{equation}
are best suited.  The radial (normal) and angular (tangential) derivatives of $h$ are defined by
\begin{equation}
h_{_N} (x) = \frac{\partial h(te^{i \theta})}{\partial t} \, , \hskip1cm t=|x|
\end{equation}
and
\begin{equation}
h_{_T}  (x) =  \frac{1}{t}\frac{\partial h(te^{i \theta})}{\partial \theta} \, , \hskip1cm t=|x|
\end{equation}
The stored energy integrand takes the form
\[\abs{Dh(x)}^2 = \abs{h_{_N} (x)}^2 + \abs{h_{_T} (x)}^2 \,   \]
This also provides an effective formula for the Jacobian determinant
\[J_h(x) = \det Dh(x) =\im (\overline{h_{_N} } {h_{_T}} ) \le \abs{h_{_N} } \, \abs{h_{_T} }\, .  \]

Our free Lagrangians $\mathscr F_{\!L}^1$ )\,, \,..\,,\,$\mathscr F_{\!L}^4$ ) in $\, \pmb{\mathscr F}_{\! L}(\mathbb A, \mathbb A^\ast)\, $ can easily be stated using polar coordinates in even slightly greater generality.   The main players and their energy integrals for $h\in \mathscr H^{1,2} (\A, \A^\ast)$  are:

\begin{eqnarray*}
 \mathscr F_{\!1})  && \,\int_\mathbb A M(x)\, dx\,  , \quad \quad M \in \mathscr L^1 (\A)  \\
 \mathscr F_{\!2}) && \,\int_\mathbb A N(\abs {h}) J_h(x)\,\textnormal dx\,=\, 2\,\pi \int_{r_\ast}^{R_\ast} N(s) \,s\,\textnormal ds
\,\\
 \mathscr F_{\!3}) && \,\int_\mathbb A  A(\abs{h}) \frac{\abs{h}_N}{\abs{x}}\,\textnormal dx\,\,=\, 2\pi \int_{r_\ast}^{R_\ast} A(s)\,\textnormal ds  \quad \quad A\in \mathscr L^1 (r_\ast, R_\ast)\\
\mathscr F_{\!4}) && \,\int_\mathbb A  B \big( |x|\big) \im \frac{h_T}{h}\,\textnormal dx\,\, =\; 2\pi \int_{r}^{R} B(t)\, dt,\quad \quad  B \in {\mathscr L}^1 (r , R).
\end{eqnarray*}

\subsection{Energy minimizers among radial mappings} It is natural to first look at the radial mappings as candidates for energy-minimizers. However, this expectation is far from being  guaranteed. In spite of the radial symmetry of the annuli and the invariance of the Dirichlet energy under rotations of $\,\mathbb A\,$ and $\,\mathbb A^\ast\,$, such a lack of symmetry of the energy-minimizers has,  quite surprisingly,  been confirmed already  in the analogous Nitsche problem in  dimensions $\,n \geqslant 3\,$. Nevertheless, the extremals within the radial mappings give us  the  pinpoint of  \textit{free-Lagrangian} to solve the minimization problem in full generality. The radial mapping
\begin{equation}\label{GeneralRadialStretching}
h_\circ (x) = H(\abs{x}) \frac{x}{\abs{x}}, \qquad \textnormal{where } H \colon [r,R] \onto [r_\ast, R_\ast]  \,
\end{equation}

If one seeks harmonic radial mappings then 
\[  \Delta h_\circ (x) = h_{tt} + \frac{1}{t} h_t \,+\,t ^{-2}  h_{\theta \theta }  = 0,\]
where  $ \, x= t\,e^{i \theta} \, $.  Then  $\,H = H(t)\,$ must satisfy the Euler's ordinary differential equation
\[ t^2\,\ddot{H}(t)\, +\, t\,  \dot{H}(t)\; -\,  \,H(t)  \, = 0\,\;\textnormal{for}\, r< t < R \]
Its two fundamental  solutions $\, t\,$ and $\, \frac{1}{t} \,$ generate all solutions  
\[ \,H(t)  \,=\, a \,t \, + \, b/t .\]
To ensure the boundary constraints, $\, H(r) = r_\ast\,$  and $\, H(R) = R_\ast \,$,  we must set,
\begin{equation}\label{ExplicitFormulas}
H(t)  \,=\, a\, t \, + \, b/t \;,\;\textnormal{where}  \;\; a = \frac{R R_\ast  - r r_\ast}{R^2 - r^2}\;\;\textnormal{and}\;\; b = \frac{R^2 r r_\ast \,-\, r^2 R R_\ast}{R^2 - r^2}
\end{equation}
It is advantageous to transfer  the above  equation to the first order ODE by simply   multiply by the integrating factor $\,-2 \dot{H}(t)\,$.  We   obtain the \textit{characteristic equation}
\begin{equation}\label{eq:Nitschesolution}
\mathcal L [H] \bydef  H^2 -t^2 \dot{H}^2 \equiv c \,,  \mbox{where $c\in \mathbb R$ is a constant.}
\end{equation}
Note that, as opposed to the second order Laplace equation,  the first order characteristic equation admits an additional solution,  namely, a constant function.

\medskip

We shall be concerned with monotone $\mathscr C^1$-solutions $H\colon [r,R] \onto [r_\ast, R_\ast]$,  thus having  $\,\dot{H}(t) \ge 0\,$. This includes  solutions that are partially constant.  In particular, they may squeeze but not fold subintervals. Under these assumptions the respective radial mappings $\,h_\circ\,$  become uniform limits of homeomorphisms, as desired in the weak formulation  of the principle of noninterpenetration of matter discussed earlier.

\begin{remark} Equation (\ref{eq:Nitschesolution}), for such solutions,  is none other than the variational equation of the minimization problem when confined to the radial mappings. This fact, though natural to expect,  is not automatic and we will exploit it.\\
We will not use the explicit formulas (\ref{ExplicitFormulas}), but only the characteristic equation (\ref{eq:Nitschesolution}). This means to our arguments have can be developed for frictionless problems in which one can predict the PDEs for the energy-minimisers,  if not their explicit solutions.    \\
\end{remark}

There is a useful quantity associated with the radial mappings called the {\it elasticity of stretching}
\[\eta_{_H} (t)\, \bydef\, \frac{t \dot {H}(t)}{H(t)}  \]

All $\mathscr C^1$-solutions fall into three categories. If $c$ is the constant at (\ref{eq:Nitschesolution}) we say that:
\begin{itemize}
\item $H$ is conformal if  $c = 0$, equivalently ${\eta_{_H} (t)} =1$ iff  $\frac{R_\ast}{r_\ast} =\frac{R}{r}$
\item $H$ is expanding if  $c < 0$, equivalently ${\eta_{_H} (t)} >1$ iff  $\frac{R_\ast}{r_\ast} > \frac{R}{r}$
\item $H$ is contracting if $c > 0$, equivalently ${\eta_{_H} (t)} < 1$  iff  $\frac{R_\ast}{r_\ast} < \frac{R}{r}$

\end{itemize}
The characteristic equation~\eqref{eq:Nitschesolution} has an injective solution if and only if
\begin{equation}\label{Nitschebound}
\frac{1}{2} \left( \frac{R}{r} + \frac{r}{R}\right) \le \frac{R_\ast}{r_\ast}
\end{equation}
We call the set of values for whiich (\ref{Nitschebound}) holds the  Nitsche range,  it includes the expanding case.
 We reserve the notation $F(\tau) \bydef H^{-1}(\tau) $ for  $r_\ast < \tau < R_\ast\,$.  Thus the characteristic equation reads as
\begin{equation}\label{eq:NitscheInverseSolution}
\left(\frac{F}{\dot{F}}\right)^2 - \tau^2 \equiv c\, \quad\quad F(\tau) =  \frac{\tau  +  \sqrt{\tau^2 - c} }{ 2 \, a}  .
\end{equation}
When the reverse inequality to (\ref{Nitschebound}) holds, we say the data lies beyond the Nitsche range.  We consider a $\,\mathscr C^{1,1}\,$-solution $H\colon [r,R] \onto [r_\ast, R_\ast]\,$  of ~\eqref{eq:Nitschesolution} defined by the rule.
\begin{equation}\label{eq:Nittschemap}
H(t) = \begin{cases} r_\ast  &  t\in [r, \rho] \\
H_\rho (t)  &  t\in [\rho, R] \end{cases} \;, \;\;\;\;  \textnormal{ $ \,\rho\,$ is determined by  } \quad  \frac{1}{2} \left( \frac{R}{\rho} + \frac{\rho}{R}\right) = \frac{R_\ast}{r_\ast}
\end{equation}
Here \,,\; $\,H_\rho(t) = \frac{r_\ast}{2} \big( t + \frac{\rho}{t}\big)     \,$,\;    is an increasing solution $H_\rho \colon [\rho, R] \onto [r_\ast, R_\ast]$,  of the characteristic equation ~\eqref{eq:Nitschesolution}.

\subsection {The  conformal case} In this case the pullback of the area form alone is sufficient to identify the energy-minimal mappings. Precisely, using  $\mathscr F_{\!2}$) ,  we obtain.

\[\int_\mathbb A \abs{Dh(x)}^2 \textnormal{d} x  = \,\int_\mathbb A \left|h_N\right|^2 +\left|h_T\right|^2  \ge 2\, \int_\mathbb A  \abs{h_N} \abs{h_T} \ge 2 \, \int_\mathbb A J_h(x)\, dx  =  2 |\mathbb A^\ast | \]
Equality occurs only for a similarity transformation of $\,\mathbb A\,$ onto $\,\mathbb A^\ast\,$ (scalar multiple of a rotation) .

\subsection{The expanding case}
Choose and fix a radial mapping $h_\circ = H\big(|x|\big) \frac{x}{|x|}$, where $H$ solves Equation~\eqref{eq:Nitschesolution} with $\,c <0\,$.  Explicitly, in complex notation, we have the formula
\begin{equation}\label{ComplexNitcheMap}
h_\circ(z) = a z + \frac{b}{\bar z}.
\end{equation}
 It is true that $\,h_\circ\,$ turns out to be  the energy-minimal solution among radial mappings, but we shall not exploit this property; equation~\eqref{eq:Nitschesolution} is sufficient.  Now suppose we are given an arbitrary mapping $h\in \mathscr H^{1,2} (\A, \A^\ast)$. We shall derive a series of sharp estimates involving   $h$ and $Dh$, each of which becomes an equality if $h=h_\circ$ - the radial Nitche map.  Let us introduce the following functions; first defined for  $\,r_\ast \leqslant \tau \leqslant R_\ast\,$ by the rule
 \[ p(\tau) = \eta_F (\tau)=  \frac{\tau \, \dot{F}(\tau)}{F(\tau)} <1, \quad\quad \Big(\;p(\tau) = \frac{\tau \sqrt{\tau^2 - c} \; +\; \tau^2 }{\tau \sqrt{\tau^2 - c} \; +\; \tau^2 - c } \Big) \]
We note that $\,p(|h(x)|) = \frac{|h_N(x)|}{|h_T(x)|}\,$ for $\,h \bydef h_\circ\,$. The second function of two variables is defined by
 \[ A(t,\tau) \bydef \frac{F(\tau)}{t \, \dot{F}(\tau)}\, , \hskip1cm  \textnormal{for}\; r \leqslant t \leqslant R \; \;\textnormal{and}  \; \; r_\ast \leqslant \tau \leqslant R_\ast \]
We note that the function $\, x  \mapsto A(|x|,|h(x)|)$ is equal to $\left|h \right|_N(x) \,$ in case that $\,h=h_\circ (x)\,$.   

To proceed we make identify algebraic inequalities which lead  to the lower bounds of the integrand by means of free-Lagrangians. In our case,  the  following point-wise inequality holds whenever $\,0 \leqslant p \leqslant 1\,$ and $\,A \geqslant 0\,$.
\begin{eqnarray*}
|Dh|^2 & = &  \left|h_N\right|^2 +\left|h_T\right|^2 \geqslant (1-p^2) \left|h_N\right|^2\;  +\;  2p \left|h_N\right|\, \left|h_T\right|  \\
&\geqslant &  (1-p^2)\,  2\, A \, \left|h\right|_N\,  - \, (1-p^2)\, A^2 \; + \; 2\, p\, J_h
\end{eqnarray*}
Now, according to~\eqref{eq:NitscheInverseSolution} we have $(1-p^2)\, A^2=  c |x|^{-2} \,$ and so
\[|Dh|^2 \ge 2 \left(  \frac{F(\abs{h})}{\dot{F}(\abs{h})}   -  \frac{\abs{h}^2 \dot{F}(\abs{h})}{{F}(\abs{h})}  \right) \,   \frac{\abs{h}_N}{\abs{x}}  -\; \frac{c}{\abs{x}^2}\; +\; 2\, p(\abs{h})\, J_h \]
Here the right hand side consists of free-Lagrangians and equality occurs for the radial Nitsche map $h_\circ\,$. Hence
\begin{equation}
\int_\mathbb A  |Dh(x)|^2\, dx \;\;\geqslant \;\; \int_\mathbb A  |Dh_\circ(x)|^2\, dx \;\;,\;\;\textnormal{as desired.}
\end{equation}

\subsection {The contracting case} In contrast to the expanding case, in the contracting case apply spherical free-Lagrangians  $\,\mathscr F_{\!4}) \,$ in place of the radial free-Lagrangian $\,\mathscr F_{\!3}) \,$. We discuss two subcases.

\subsubsection{Annuli still within the Nitsche range at (\ref{Nitschebound})}   In this subcase the radial solution $\,h_\circ\,$ still remains injective. The first step is to choose a good point-wise inequality. Precisely, for all  parameters $\,0\leqslant q \leqslant 1\,$ and $\,B \geqslant 0\,$ it holds, as is easily verified, that
\begin{eqnarray*}
|Dh|^2 &=& \left|h_N\right|^2 + \left|h_T\right|^2 \geqslant (1-q^2) \left|h_T\right|^2 + 2 q \left|h_N\right| \, \left|h_T\right| \\
& \geqslant & (1-q^2)\, 2B \left|h_T\right| \, -\,  (1-q^2)\, B^2 \, +\, 2\, q \, J_h \\
& \ge & (1-q^2)\, 2B \abs{h} \im  \frac{h_T}{h} \, -\,  (1-q^2)\, B^2 \, +\, 2\, q \, J_h
\end{eqnarray*}
 Then, in the second step,  we   take for $\,q\,$ and $\,B\,$ the following functions
\begin{eqnarray}
q&=&q(\abs{h}) = \frac{1}{\eta_F (\abs{h})}= \frac{F(\abs{h})}{\abs{h} \, \dot{F}(\abs{h})} <1,\\
B&=&B(\abs{x}, \abs{h}) =    \left( \frac{H^2(\abs{x}) }{\abs{x}} - \abs{x} \, \dot{H}^2 (\abs{x})  \right)   \frac{1}{\abs{h} \, (1-q^2(\abs{h})  )} \, .
\end{eqnarray}
Now since  $\abs{h}^2 \, (1-q^2(\abs{h}) \equiv c $ by~\eqref{eq:NitscheInverseSolution} we obtain the desired lower bound on $\,|Dh|^2\,$ by free-Lagrangians.
\begin{equation}\label{eq:contracting}
|Dh|^2 \ge 2  \left( \frac{H^2(\abs{x}) }{\abs{x}} - \abs{x} \, \dot{H}^2 (\abs{x})  \right)  \im  \frac{h_T}{h}  -  \frac{1}{c}  \left( \frac{H^2(\abs{x}) }{\abs{x}} - \abs{x}   \dot{H}^2 (\abs{x})  \right)^2
 +  \,  \frac{2\,J_h}{\eta_{_F} (\abs{h})}
\end{equation}

Here again the right hand side consists of free-Lagrangians and equality occurs for the radial Nitsche map $h_\circ\,$. Hence
\begin{equation}
\int_\mathbb A  |Dh(x)|^2\, dx \;\;\geqslant \;\; \int_\mathbb A  |Dh_\circ(x)|^2\, dx \;
\end{equation}

\subsubsection{Annuli beyond  the Nitsche bound} This means that
\[\frac{1}{2} \left( \frac{R}{r} + \frac{r}{R}\right) > \frac{R_\ast}{r_\ast} \]
 It is exactly in this subcase that the radial solution $\,h_\circ\,$ fails to be  injective.
A plausible candidate for the energy-minimal deformation is the  squeezing/stretching radial mapping $\,h_\circ (x) = H(\abs{x}) \frac{x}{\abs{x}}\,$, where $\,H \colon [r,R] \onto [r_\ast, R_\ast]\,$ is given by~\eqref{eq:Nittschemap}. \\
For $\rho < \abs{x} <R$ we apply~\eqref{eq:contracting} and obtain a lower bound in terms of free-Lagrangians
\[
|Dh|^2 \ge 2 \, \left( \frac{H_\rho^2(\abs{x}) }{\abs{x}} - \abs{x} \, \dot{H}_\rho^2 (\abs{x})  \right)  \im  \frac{h_T}{h} -\frac{1}{c}  \left( \frac{H_\rho^2(\abs{x}) }{\abs{x}} - \abs{x} \, \dot{H}_\rho^2 (\abs{x})  \right)^2
 + 2 \,  \frac{J_h}{\eta_{_{F_\rho}} (\abs{h})}\]
The remaining free-Lagrangians lower bound for $\,r< \abs{x} \le \rho\,$ is just as easy to verify.
\[ |Dh|^2= \left|h_N\right|^2 + \left|h_T\right|^2  \ge 2B \,r_\ast \im \frac{h_T}{h}- B^2\;\;\textnormal{with}\;\; B(|x|) = r_\ast |x|^{-1} \]
The point is that both lower bounds  also become equalities for $\,h_\circ\,$. In conclusion,
\begin{equation}
\int_\mathbb A  |Dh(x)|^2\, dx \;\;\geqslant \;\; \int_\mathbb A  |Dh_\circ(x)|^2\, dx \;,\;\;\;\textnormal{as well}
\end{equation}
All the above cases summarize as follows
\begin{theorem}
The frictionless deformations $\,h :\mathbb A \onto \mathbb A^\ast \,$ between annuli assume their minimum Dirichlet energy among radial mappings.
\end{theorem}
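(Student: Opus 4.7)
The plan is to assemble the three subcases already analysed, which are mutually exclusive and exhaustive according to the sign of the constant $c$ in the characteristic equation (\ref{eq:Nitschesolution}): the conformal case $R_\ast/r_\ast = R/r$, the expanding case $R_\ast/r_\ast > R/r$, and the contracting case $R_\ast/r_\ast < R/r$ (further split according to whether the Nitsche bound (\ref{Nitschebound}) holds). In each case I would fix the appropriate radial candidate $h_\circ(x) = H(|x|)\,x/|x|$, where $H\colon [r,R]\onto [r_\ast,R_\ast]$ solves (\ref{eq:Nitschesolution}) with the correct sign of $c$; for the beyond-Nitsche subcase $H$ is the piecewise map (\ref{eq:Nittschemap}), which is only monotone rather than strictly increasing.

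The engine in every case is a pointwise inequality that dominates the Dirichlet integrand $|Dh|^2 = |h_N|^2 + |h_T|^2$ by a sum of free-Lagrangian integrands drawn from the list $\mathscr F_{\!1})$--$\mathscr F_{\!4})$, calibrated so that equality holds when $h=h_\circ$. First, for the conformal case, the pullback Lagrangian $\mathscr F_{\!2})$ alone (combined with the elementary AM--GM bound $|h_N|^2 + |h_T|^2 \geqslant 2 J_h$) gives $\mathscr E_2[h] \geqslant 2|\mathbb A^\ast|$, attained by any similarity. Second, for the expanding case I would invoke the radial bound displayed just before (5.39) of the excerpt, namely
\[|Dh|^2 \geqslant 2\left(\tfrac{F(|h|)}{\dot F(|h|)} - \tfrac{|h|^2 \dot F(|h|)}{F(|h|)}\right)\tfrac{|h|_N}{|x|} - \tfrac{c}{|x|^2} + 2p(|h|)J_h,\]
whose right-hand side pairs a radial free Lagrangian $\mathscr F_{\!3})$, a volume form $\mathscr F_{\!1})$ on $\mathbb A$, and a pullback $\mathscr F_{\!2})$. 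Third, for the in-range contracting case I would apply the dual inequality (\ref{eq:contracting}), which substitutes the spherical free Lagrangian $\mathscr F_{\!4})$ for the radial one; and fourth, for the beyond-Nitsche subcase I would sum the two pieces: (\ref{eq:contracting}) applied with $H_\rho$ on the stretched annulus $\{\rho < |x| < R\}$, and the squeezing-piece bound $|Dh|^2 \geqslant 2B\,r_\ast\,\im(h_T/h) - B^2$ with $B(|x|) = r_\ast/|x|$ on $\{r < |x| \leqslant \rho\}$.

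In every subcase the final step is the same: integrate over $\mathbb A$, invoke Propositions \ref{PR1} and \ref{Le333} together with the homotopy-invariance of $\mathscr F_{\!1})$ and $\mathscr F_{\!2})$ so that each summand on the right is replaced by a constant depending only on $r,R,r_\ast,R_\ast$, and observe that the identical chain of inequalities is a chain of equalities when $h=h_\circ$ (by the very definitions of $p,A,B$ in terms of the stretching $H$ and its inverse $F$). This yields $\mathscr E_2[h] \geqslant \mathscr E_2[h_\circ]$ for every $h\in \mathscr H^{1,2}(\mathbb A,\mathbb A^\ast)$.

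The delicate point is the beyond-Nitsche subcase, where $h_\circ$ is not a homeomorphism but only a uniform limit of homeomorphisms—it crushes the inner collar $\{r\leqslant |x|\leqslant \rho\}$ onto the inner circle. Two things must be verified there: (i) that the spherical Lagrangian $\mathscr F_{\!4})$ still evaluates to its expected boundary-component integral on each piece separately, since $h_\circ$ wraps the inner collar around $\{|y|=r_\ast\}$ with topological degree one; and (ii) that the squeezing-piece bound $|Dh|^2 \geqslant 2B r_\ast\,\im(h_T/h) - B^2$ is attained at $h_\circ$, where $h_{\circ,N}\equiv 0$, $J_{h_\circ}\equiv 0$, and $|h_{\circ,T}| = r_\ast/|x|$. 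Both are direct checks, and once they are in place the theorem follows by adding the two contributions. No weak-compactness argument is needed because everything reduces to a pointwise almost-everywhere estimate whose right-hand side integrates to a quantity dictated solely by the boundary data through the free-Lagrangian identities.
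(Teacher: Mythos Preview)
Your proposal is correct and follows essentially the same approach as the paper: the theorem is a summary of the four subcases (conformal, expanding, contracting in-range, contracting beyond-range), each handled by a calibrated pointwise bound on $|Dh|^2$ in terms of the free Lagrangians $\mathscr F_{\!1}$--$\mathscr F_{\!4}$ with equality at the radial candidate $h_\circ$, exactly as you describe. Your identification of the delicate beyond-Nitsche subcase and the two checks needed there (that the piecewise bounds glue compatibly and that equality holds for the squeezing map) is also in line with the paper's treatment.
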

Actually, our proofs easily give an even more precise statement; namely, the energy-minimal mappings are unique up to a rotation of the variable $\,x \in \mathbb A\,$ or, equivalently, a rotation of $\,y\in \mathbb A^\ast\,$.

 \section{Mappings of $\,\mathscr L^p\,$-minimal distortion}
  
In relation to the previous problem, we have already noted in the introduction at (\ref{Kdef}) and the preamble, that should $h$ be a homeomorphism of Sobolev class,  then
\begin{equation}
\int_\A |Dh|^2 \,dz = \int_{\A^*} \IK(z,f) \,dz, \quad \quad f = h^{-1}:\A^*\to \A.
\end{equation}

If $\Phi:V\onto W$ is a conformal mapping between planar domains,  and $f:U\onto V$ is a homeomorphism of Sobolev class $\W^{1,p} (U,V)$, $p\geq 1$, then it is an elementary calculation that
\begin{equation}\label{kid} \IK(z,\Phi\circ f) = \IK(z, f)\end{equation} and hence
\begin{equation}
 \int_{U} \IK(z,\Phi\circ f) \,dz = \int_{U} \IK(z, f)\,dz .
\end{equation}
Let $\Omega$ be a doubly connected planar domain.  The Riemann mapping theorem for doubly connected domains tells us that there  is a single conformal invariant,  the modulus of $\Omega$ -- denoted $mod(\Omega)$,  which identifies a conformally equivalent round annulus $\A=\A(r,R)$ and $\log R/r = mod(\A)=mod(\Omega)$. We can now restate the results of \S \ref{NTFP} as follows. 

\begin{theorem}[{\bf p=1}]\label{p=1} Let $\Omega$ be a doubly connected planar domain with $s=mod(\Omega)$ and $\A=\A(1,R)$, $r=mod(\A)$. Then there is a homeomorphism $f:\A \onto \Omega$ of Sobolev class $\W^{1,1} (\A,\Omega)$ minimising  $\int_{\A} \IK(z, f)\,dz$
if and only if $\cosh(s) \leq  e^r$.  
This minimiser is a diffeomorphism and is unique up to conformal automorphisms of $\Omega$.
\end{theorem}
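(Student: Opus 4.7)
The plan is to reduce the statement to the frictionless Nitsche problem for round annuli solved in Section~\ref{NTFP}. Two reductions are required. First, I use the conformal invariance identity~(\ref{kid}) to replace the target $\Omega$ by a conformally equivalent round annulus $\A^\ast = \A(1, e^s)$: if $\Psi\colon \A^\ast \onto \Omega$ is a conformal equivalence provided by the Riemann mapping theorem for doubly connected domains, and $\tilde f \bydef \Psi^{-1}\circ f \colon \A \onto \A^\ast$, then $\IK(z,f) = \IK(z,\tilde f)$ pointwise, so the two distortion integrals agree. Second, I pass from the distortion integral to the Dirichlet energy of the inverse map. Using the pointwise identity $\IK(z,\tilde f) = |D\tilde f(z)|^2/J(z,\tilde f)$ and the change of variables $w = \tilde f(z)$, one obtains
\begin{equation*}
\int_\A \IK(z,\tilde f)\,dz \;=\; \int_{\A^\ast} |Dh(w)|^2\,dw, \qquad h \bydef \tilde f^{-1} \colon \A^\ast \onto \A.
\end{equation*}
Thus the original problem is equivalent to minimising the Dirichlet energy over $\W^{1,2}$-homeomorphisms $h \colon \A^\ast \onto \A$, in which the source has modulus $s$ and the target has modulus $r$.

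In the notation of Section~\ref{NTFP}, with source and target now reversed, the Nitsche bound~(\ref{Nitschebound}) reads $\tfrac{1}{2}(e^s + e^{-s}) \le e^r$, i.e.\ $\cosh(s) \le e^r$, which is exactly the condition in the theorem. If $\cosh(s) \le e^r$, the radial Nitsche map $h_\circ \colon \A^\ast \onto \A$ of Section~\ref{NTFP} is a smooth diffeomorphism realising the minimum Dirichlet energy, uniquely up to a rotation of $\A^\ast$; pulling back, $f \bydef \Psi \circ h_\circ^{-1} \colon \A \onto \Omega$ is a diffeomorphism attaining the infimum of $\int_\A \IK(z,f)\,dz$, with uniqueness up to rotations of $\A^\ast$ translating via $\Psi$ to uniqueness up to conformal automorphisms of $\Omega$. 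Conversely, if $\cosh(s) > e^r$ then the radial map given by~(\ref{eq:Nittschemap}) squeezes a sub-annulus to the inner circle and fails to be injective; since the pointwise free-Lagrangian inequalities of Section~\ref{NTFP} become equalities only for this radial map, any homeomorphism attaining the infimum would have to coincide with it up to rotation, which is impossible, proving the only-if direction.

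The step I expect to be the main obstacle is the rigorous justification of the equivalence between the distortion integral and the Dirichlet energy of the inverse at the borderline regularity class $\W^{1,1}$: the validity of the change-of-variables formula and the membership $\tilde f^{-1} \in \W^{1,2}$ for a $\W^{1,1}$-homeomorphism with only integrable distortion require area-formula arguments and tools from the theory of mappings of finite distortion that are not automatic from Sobolev regularity alone. Once this equivalence is granted, the analytic work already carried out in Section~\ref{NTFP} finishes the argument.
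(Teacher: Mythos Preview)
Your proposal is correct and follows essentially the same route as the paper: the paper presents Theorem~\ref{p=1} explicitly as a restatement of the results of \S\ref{NTFP}, obtained via the conformal invariance identity~(\ref{kid}) and the identification $\int_\A |Dh|^2 = \int_{\A^\ast}\IK(z,f)$ for $f=h^{-1}$, exactly as you outline. The paper likewise flags the borderline regularity issue for the inverse (whether $h\in\W^{1,2}$) as a subtlety it sets aside, referring to \cite[\S 21]{AIMb} and \cite{HK}, so your identification of this as the main technical gap matches the paper's own acknowledgment.
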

This is the Nitsche bound (\ref{Nitschebound}). There some subtlety here regarding the regularity of $h=f^{-1}$.  In particular is $h\in \W^{1,2} (\Omega,\A)$ ?  This is discussed in \cite[\S 21]{AIMb} and \cite{HK}.  Despite being of considerable interest,  we set aside discussion of these issues in this article.
In contrast to Theorem \ref{p=1} we have the following theorem concerning extremal quasiconformal mappings.

\begin{theorem}[{\bf p=$\infty$}]\label{p=infty} Let $\Omega$ be a doubly connected planar domain with $0< mod(\Omega)<\infty$. Then for every $0<mod(\A)<\infty$,  there is a homeomorphism $f:\A \onto \Omega$ of Sobolev class $\W^{1,2} (\A,\Omega)$ minimising 
$ \|\,\IK(z, f)\,\|_{L^{\infty}(\A)} $
This minimiser is a diffeomorphism and is unique up to conformal automorphisms of $\Omega$.
\end{theorem}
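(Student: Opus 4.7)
The plan is to reduce to a model problem between round annuli, apply the sharp Gr\"otzsch modulus inequality for a lower bound, exhibit the radial Gr\"otzsch stretch as a minimiser, and then deduce uniqueness and smoothness from the rigidity of the equality case. First, by the Riemann mapping theorem for doubly connected domains there is a conformal equivalence $\Phi \colon \A^\ast \onto \Omega$ with $\A^\ast = \A(1,e^s)$, $s = mod(\Omega)$. By (\ref{kid}), $\|\IK(\cdot, f)\|_{L^\infty(\A)} = \|\IK(\cdot, \Phi^{-1}\!\circ f)\|_{L^\infty(\A)}$, so it suffices to minimise $\|\IK(\cdot, g)\|_{L^\infty(\A)}$ over Sobolev homeomorphisms $g \colon \A \onto \A^\ast$. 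In two real dimensions $\IK = \tfrac12(K+K^{-1})$, where $K = K(z,g)$ is the linear distortion, so minimising $\IK$ in $L^\infty$ is equivalent to minimising $K$ in $L^\infty$.

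Next, I would invoke the Gr\"otzsch modulus inequality: every $K$-quasiconformal homeomorphism between annuli of moduli $m_1, m_2$ satisfies $m_1/K \le m_2 \le K m_1$. This may be proved by the method of extremal length or, in the spirit of Section \ref{NTFP}, via the free Lagrangians $\mathscr F_{\!L}^3$ and $\mathscr F_{\!L}^4$ which compute the moduli of $\A$ and $\A^\ast$. Applied with $m_1 = \log R$ and $m_2 = s$, it yields $\|K(\cdot,g)\|_{L^\infty(\A)} \ge \alpha_0 := \max(s/\log R,\, \log R/s)$ for every admissible $g$, and hence $\|\IK(\cdot,g)\|_{L^\infty(\A)} \ge \tfrac12(\alpha_0 + \alpha_0^{-1})$. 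Attainment is realised by the Gr\"otzsch radial stretch $g_0(z) = z\,|z|^{\alpha - 1}$ with $\alpha = s/\log R$: a direct computation gives $\mu_{g_0}(z) = \tfrac{\alpha-1}{\alpha+1}\cdot z/\bar z$, so $K(\cdot, g_0) \equiv \alpha_0$ pointwise; $g_0$ is a $\mathscr C^\infty$ diffeomorphism of $\A$ onto $\A^\ast$ and $g_0 \in \W^{1,2}(\A,\A^\ast)$ by explicit integration. Thus $f = \Phi \circ g_0$ is an extremal diffeomorphism.

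For uniqueness, suppose $g$ is any other minimiser, so that $\|K(\cdot, g)\|_{L^\infty} = \alpha_0$. The rigidity analysis of the equality case in Gr\"otzsch's inequality forces $|\mu_g(z)| \equiv \tfrac{\alpha_0 - 1}{\alpha_0 + 1}$ and $\arg \mu_g(z) \equiv 2\arg z \pmod{2\pi}$ almost everywhere on $\A$ --- that is, $\mu_g = \mu_{g_0}$ a.e. By the Sto\"ilow factorisation, two quasiconformal homeomorphisms on $\A$ sharing a Beltrami coefficient differ only by post-composition with a conformal map of the image, so $g = \psi \circ g_0$ with $\psi$ a conformal automorphism of $\A^\ast$. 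Transporting via $\Phi$, the minimiser $f = \Phi \circ g$ is unique up to a conformal automorphism of $\Omega$, and is automatically a diffeomorphism. The principal obstacle is this rigidity step: $L^\infty$-extremality only constrains $\mu_g$ up to sets of measure zero, and to force pointwise coincidence with $\mu_{g_0}$ one must saturate the extremal-length inequality simultaneously on both the radial and the circular foliations of $\A$ --- precisely the combined use of $\mathscr F_{\!L}^3$ and $\mathscr F_{\!L}^4$ witnessed in Section \ref{NTFP} --- or alternatively invoke the Reich--Strebel uniqueness theorem for quasiconformal extremals between annuli.
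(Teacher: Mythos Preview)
Your proposal is correct and follows the same route as the paper: reduce to round annuli via the conformal equivalence $\Phi$ and (\ref{kid}), then identify the radial stretch $z\mapsto z|z|^{\alpha-1}$ as the extremal quasiconformal map. The paper's argument is in fact only the two-sentence sketch immediately following the theorem, so your version is strictly more detailed --- you supply the Gr\"otzsch modulus bound for the lower estimate and a Sto\"ilow-type uniqueness argument, neither of which the paper spells out.
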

One can reduce this to the case $\Omega=\A(1,e^s)$ using (\ref{kid}) and $\A=\A(1,e^r)$.  Then identify the extremal quasiconformal mapping as the radial map $z\mapsto z|z|^{\alpha-1}$, $\alpha=\log(s)/\log(r)$.

\medskip

There is a surprising difference here between Theorem \ref{p=1} and Theorem \ref{p=infty}.  Namely there is always and extremal quasiconformal mapping, but a minimiser of mean distortion exists only within a range of moduli -- precisely the Nitsche range.  This leads naturally to the question of what happens for $1<p<\infty$ when we minimise 
\[ \int_{\A} \IK^p(z, f)\,dz \]
The following answer is a little surprising,  \cite{MM}, (this also was recently developed in the context of free-Lagrangians in \cite{IOp} by the first and third authors,  but here we give a different,  but actually equivalent, approach).  The result suggests an interesting critical phase type phenomena.  When $p<1$,  apart from the identity map,  minimizers never exist.  When $p=1$ we observe Nitsche type phenomena; minimisers exist within a range of conformal moduli determined by properties of the weight function and not otherwise.  When $p>1$ minimisers always exist. We will now go through the proof of this.  In \cite{MM} rather more is proved,  namely there the weighted $L^p$-mean distortion is considered.   As noted above the problem is reduced to considering deformations $f:{\mathbb A}_1 \to {\mathbb A}_2$ between round annuli.
 
\subsection{Mappings of finite distortion}

A homeomorphism $f:\Omega\to \Omega'$ between planar domains of Sobolev class $\mathscr W^{1,1}_{loc}(\Omega,\Omega')$ has finite distortion if the Jacobian determinant $J(z,f)$ is nonnegative and there is a function $\IK(z,f)$ finite almost everywhere such that
\[ |Df(z)|^2 \leqslant \IK(z,f) \; J(z,f). \]
Recall function $\IK(z,f)$ is the   {\em distortion} of the mapping $f$.  We saw at (\ref{Kdef}) that $\IK$ is a measure of the anisotropic local stretching.

Mappings of finite distortion  are generalisations of quasiconformal homeomorphisms and have found considerable recent application in geometric function theory and nonlinear PDEs, \cite{AIMb}.   We define annuli
 \[ {\mathbb A}_1 = \{1\leqslant |z| \leqslant R \}, \hskip30pt  {\mathbb A}_2 = \{1\leqslant |z| \leqslant S \} \]
with moduli $\sigma_1=\log(R)$ and $\sigma_2=\log(S)$.  We consider homeomorphisms of finite distortion $f:{\mathbb A}_1\to{\mathbb A}_2$ mapping the boundary components to each other,   
 \[ f(\{|z|=1\})=\{|z|=1\}, \hskip20pt {\rm and} \hskip20pt f(\{|z|=R\})=\{|z|=S\}. \]
On the annulus ${\mathbb A}_1$ place a positive weight
$\eta:{\mathbb A}_1 \to {\mathbb R}_+ $.  In polar coordinates 
\begin{equation}
 f_z = \frac{1}{2} \; e^{-i\theta} \left( f_\rho - \frac{i}{\rho} \; f_\theta \right), \;\;\; \;\;
 f_{\bar z} = \frac{1}{2} \; e^{i\theta} \left( f_\rho + \frac{i}{\rho} \; f_\theta \right) 
\end{equation}
and $|f_z|^2+|f_{\bar z}|^2 = \frac{1}{2}(|f_\rho|^2+\rho^{-2} |f_\theta|^2)$, 
$J(z,f) = |f_z|^2-|f_{\bar z}|^2 = \frac{1}{\rho}\; {\rm Im}(f_\theta \overline{f_\rho})$
which together yield
\begin{equation}\label{2.2}
\IK(z,f) = \frac{|f_z|^2+|f_{\bar z}|^2}{|f_z|^2-|f_{\bar z}|^2} = \frac{\rho |f_\rho|^2+\rho^{-1} |f_\theta|^2}{2 {\rm Im}(f_\theta \overline{f_\rho})}.
\end{equation}
Given a convex function $\varphi:[1,\infty) \to [0,\infty)$ a Nitsche type problem asks us to establish the existence or otherwise of a minimizer (or perhaps stationary point) of the functional 
\begin{equation}\label{nitprob}
f \mapsto \int_{{\mathbb A}_1} \varphi(\IK(z,f)) \; \eta(z) \; |dz|^2.
\end{equation}
Thus we seek a deformation of the annulus ${\mathbb A}_1$ to ${\mathbb A}_2$ which minimises some weighted  $L^\varphi$ average of the distortion. \S \ref{NTFP} deals with the case $\varphi(t)=t$ and $\eta(x)\equiv 1$; minimisers of mean distortion.   

\subsection{Gr\"otzsch type problems} \label{GTP}

The classical Gr\"otzsch problem asks one to identify the linear mapping as the homeomorphism of least maximal distortion between two rectangles.  Put
\[ {\bf Q}_1=[0,\ell]\times [0,1],\hskip20pt {\bf Q}_2=[0,L]\times [0,1] \]
and suppose we have a deformation of finite distortion $ f:{\bf Q}_1\to{\bf Q}_2$  with  
\begin{equation}\label{bc} {\rm Re} f(0,y) = 0, \;\;\; {\rm Re} f(\ell,y) = L, \;\;\; {\rm Im} f(x,0) = 0, \;\;\;  {\rm Im}   f(x,1) = 1 \end{equation}
(so  $f$ is orientation preserving and maps edges to edges).
This Sobolev map is absolutely continuous on lines and so $\int_{0}^{\ell} {\rm Re}( f_x) \;dx = L$ and $\int_{0}^{1} {\rm Im} (  f_y ) \;dy = 1$ for almost all $y$ and $x$ respectively,  and hence
\begin{equation} \label{ak1} 
{\rm Re} \int_{{\bf Q}_1} f_x(z) \;|dz|^2   =  L , \hskip10pt {\rm Im}\int_{{\bf Q}_1} f_y (z) \;|dz|^2   =   \ell.
\end{equation}
The distortion function is
\begin{equation}
\IK(z,f) = \frac{|f_x|^2+|f_y|^2}{J(z,f)} \geqslant 1.
\end{equation}
A Gr\"otzsch problem seeks a minimiser,  satisfying the boundary conditions (\ref{bc}),  to the functional
\begin{equation}
f \mapsto \int_{{\bf Q}_1} \varphi(\IK(z,f)) \; \lambda(z) \; |dz|^2
\end{equation}
for some positive weight function $\lambda$.

\subsection{Equivalence between  Nitsche and  Gr\"otzsch problems}

The universal cover of an annulus is effected by the exponential map,  so $z\mapsto \exp(2\pi z)$ takes $z=x+iy\in [0,L]\times [0,1]$ to ${\mathbb A}_2$ if $\sigma_2=\log(S)=2\pi L$. A branch of logarithm must be chosen to define an ``inverse'' map $[0,\ell]\times[0,1] \to {\mathbb A}_1$.  If  $f:{\mathbb A}_1\to{\mathbb A}_2$ is given,  then we can define
$\tilde{f}(z) = \frac{1}{2 \pi}\log(f(\exp{2\pi z}))$.  A particular point here is that $\log$  is conformal (in fact we only really need $\log$ to define a univalent conformal mapping from 
${\mathbb A}_1\setminus ( [1,S]\times\{0\}) $ to ${\bf Q}_2$ with edges matching up) so
\begin{equation}
\IK(z, \tilde{f}) = \IK \left(z, \frac{1}{2 \pi} \log\big(f(e^{2\pi z})\big)\right) = \IK\big(z,  f(e^{2\pi z})\big), 
\end{equation}
 and hence a change of variables yields
 \begin{eqnarray*}
 \int_{{\bf Q}_1}\varphi\big(\IK(z,\tilde{f})\big) \lambda(z) \; |dz|^2 & = &  \int_{{\bf Q}_1}\varphi\big(\IK(z,  f( e^{2\pi z})\big) \lambda(z) \; |dz|^2 \\
  & = &  4\pi^2\; \int_{{\mathbb A}_1}\varphi\big(\IK(w,  f )\big) \; \lambda(z) e^{-4\pi {\rm Re}(z)} \; |dw|^2 .
 \end{eqnarray*}
With the choice
\begin{equation}\label{etalamda} \eta(w)  = 4\pi^2\; \lambda(z) e^{-4\pi {\rm Re}(z)}, \hskip30pt e^z=w, \end{equation}
 the equivalence between the two problems (with related weight) is seen. Again, the exact branch of log here will be immaterial to our considerations.
 
 \medskip 
 
\begin{remark} In fact the equivalence between  Nitsche and  Gr\"otzsch problems is only when one assumes periodic boundary behaviour in the Gr\"otzsch problem.  We will be fortunate in that  the absolute minimisers for the Gr\"otzsch problem in the situations we consider do exhibit this periodicity and so can be lifted.  
\end{remark}
 
\subsection{Sublinear distortion functionals; $p<1$.}

The purpose of this brief section is to show that  minimisers never exist for the $L^p$-minimisation problem when $p<1$.   We recall from \cite[Theorem 5.3]{AIMO} (actually the proof of this result)
\begin{lemma}\label{lemseq} Let $\Psi(t)$ be a positive strictly increasing function of sublinear growth:
\[\lim_{t\to\infty} \frac{\Psi(t)}{t} = 0 \]
Let ${\mathbb B}={\mathbb D}(z_0,r)$ be a round disk and suppose that $f_0:{\mathbb B}\to {\mathbb C}$ is a homeomorphism of finite distortion with $\int_{\mathbb B} \Psi(\IK(z,f_0)) < \infty$. Then there is a sequence of mappings of finite distortion $f_n:{\mathbb B}\to f_0({\mathbb B})$ with $f_n(\zeta)=f_0(\zeta)$ near $\partial{\mathbb B}$ and with
\begin{itemize}
\item $\IK(z,f_n) \to 1$ uniformly on compact subsets of ${\mathbb B}$
\item $ \int_{\mathbb B} \Psi(\IK(z,f_n)) \to \int_{\mathbb B} \Psi(1)$ as $n\to\infty$.
\end{itemize}
\end{lemma}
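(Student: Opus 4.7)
The strategy is to modify $f_0$ so that it is genuinely conformal on an exhausting sequence of concentric subdisks of $\mathbb{B}$, retaining $f_n = f_0$ in a shrinking collar adjacent to $\partial \mathbb{B}$ and interpolating homeomorphically across a thin transition annulus. The sublinear growth of $\Psi$ is precisely what allows the (potentially large) distortion of the transition to contribute negligibly in the limit.

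Concretely, I would fix nested disks $K_n = \mathbb{D}(z_0, r_n) \subset\subset K_n^+ = \mathbb{D}(z_0, r_n^+) \subset\subset \mathbb{B}$ with $r_n, r_n^+ \nearrow r$, and set the transition collar $A_n = K_n^+ \setminus \overline{K_n}$. Because $f_0$ is a homeomorphism, $f_0(K_n)$ is a Jordan subdomain of $f_0(\mathbb{B})$, so the Riemann mapping theorem together with Carath\'eodory's extension produces a conformal homeomorphism $\phi_n \colon \overline{K_n} \to \overline{f_0(K_n)}$. Define $f_n = \phi_n$ on $K_n$ (so $\IK(z, f_n) \equiv 1$ there) and $f_n = f_0$ on $\mathbb{B} \setminus K_n^+$. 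Extend $f_n$ across $A_n$ as a homeomorphism onto the topological annulus $W_n := f_0(K_n^+) \setminus \overline{f_0(K_n)}$ realizing the prescribed boundary values $\phi_n|_{\partial K_n}$ and $f_0|_{\partial K_n^+}$; a standard construction (for instance, uniformize $W_n$ by a round annulus and interpolate radially in log-coordinates) yields such a map with some finite distortion bound $\IK(z, f_n) \leq M_n$ on $A_n$.

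For the verification: any compact $K \subset \mathbb{B}$ lies in $K_n$ for large $n$, so $\IK(z, f_n) \to 1$ uniformly on $K$. For the integral, decompose
\[
\int_{\mathbb{B}} \Psi(\IK(z, f_n))\, dz = \Psi(1)\, |K_n| + \int_{A_n} \Psi(\IK(z, f_n))\, dz + \int_{\mathbb{B} \setminus K_n^+} \Psi(\IK(z, f_0))\, dz.
\]
The first term tends to $\Psi(1)\,|\mathbb{B}|$; the third tends to $0$ by absolute continuity of the Lebesgue integral, since $|\mathbb{B} \setminus K_n^+| \to 0$ and $\Psi(\IK(\cdot, f_0))$ is integrable by hypothesis. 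The middle term is bounded by $|A_n|\,\Psi(M_n)$. Setting $\delta_n := r_n^+ - r_n$, one has $|A_n| = O(\delta_n)$ while $M_n$ grows at worst like $C_n/\delta_n$ (the reciprocal modulus of a thin round annulus), so
\[
|A_n|\,\Psi(M_n) \;\lesssim\; \delta_n\,\Psi(C_n/\delta_n) \;=\; C_n \cdot \frac{\Psi(C_n/\delta_n)}{C_n/\delta_n} \;\longrightarrow\; 0
\]
by the sublinearity $\Psi(t)/t \to 0$. A diagonal choice $\delta_n \to 0$ sufficiently slowly gives the claimed convergence.

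The principal obstacle is the interpolation step: one must construct a finite-distortion homeomorphism $A_n \to W_n$ realizing the prescribed (only continuous) boundary correspondences on $\partial A_n$. Normalizing $\phi_n$ through three boundary points and uniformizing $W_n$ by a round annulus reduces the problem to classical matching between two round annuli, which is solved explicitly by the radial stretch with distortion equal to the ratio of the moduli. The delicate point is the diagonal balance of $\delta_n$ and $M_n$; it is precisely the sublinear growth of $\Psi$ that absorbs any unboundedness in $M_n$, and without it (e.g.\ for linear $\Psi$) the lemma would certainly fail.
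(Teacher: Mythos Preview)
The paper gives no proof of this lemma; it is simply quoted from the proof of \cite[Theorem~5.3]{AIMO}. Your proposal must therefore stand on its own. Its overall architecture does match the construction behind that reference: conformal on an exhausting family of subdisks, unchanged near $\partial\mathbb B$, an interpolant across a thin collar, and the sublinearity of $\Psi$ used to kill the collar contribution. The treatment of the first bullet and of the inner and outer pieces of the integral is fine.

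The interpolation step, however, has a genuine gap that your closing paragraph does not repair. After uniformising the target annulus $W_n$ by a round annulus via a conformal $\psi$, the prescribed boundary data become $\psi\circ\phi_n$ on the inner circle and $\psi\circ f_0$ on the outer circle. These are merely circle homeomorphisms, not rotations; the radial stretch between two round annuli realises only the identity boundary correspondence, so it simply does not produce a map with the required boundary values. Normalising $\phi_n$ at three boundary points fixes three values, not the entire trace. More seriously, since $f_0$ is only a mapping of finite distortion with no local quasiconformality hypothesised, the trace $f_0|_{\partial K_n^{+}}$ need not be quasisymmetric and the Jordan curve $f_0(\partial K_n^{+})$ need not even be rectifiable; thus Beurling--Ahlfors and Douady--Earle type extensions are unavailable. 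You have therefore not exhibited any finite-distortion interpolant at all, let alone one obeying $M_n\le C_n/\delta_n$ with a controllable $C_n$, and the diagonal argument has nothing to feed on.

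This is not a detail that can be waved away: matching an arbitrary circle homeomorphism by a finite-distortion map with a quantitative distortion bound is exactly the hard point, and sublinearity of $\Psi$ cannot compensate for a nonexistent $M_n$. You should consult the actual construction in \cite{AIMO}, which is arranged so that the two pieces agree along the seam by design rather than through an extension theorem; alternatively, one can first pass to a quasiconformal approximation of $f_0$ so that the boundary traces become quasisymmetric and a controlled interpolant is available, and then diagonalise.
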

We now prove the following theorem.  

\begin{theorem} Let $\Psi(t)$ be a positive strictly increasing function of sublinear growth, 
let $\Omega$ be a  domain and let $\lambda(z)\in L^\infty(\Omega)$ be a positive weight.  Suppose that $g_0:\Omega\to{\mathbb C}$ is a homeomorphism of finite distortion with 
\[ \int_\Omega \Psi(\IK(z,g_0)) < \infty \] 

Then there is a sequence of mappings of finite distortion $g_n:\Omega \to g_0(\Omega)$ with $g_n(\zeta)=g_0(\zeta)$, $\zeta\in \partial \Omega$ with
\begin{equation}
 \int_\Omega \Psi(\IK(z,g_n)) \, \lambda(z)  \to \Psi(1) \int_\Omega \lambda(z) \hskip20pt \mbox{ as $n\to\infty$} 
\end{equation}
\end{theorem}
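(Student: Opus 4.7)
The plan is to localize Lemma \ref{lemseq} to a Vitali-type cover of $\Omega$. Choose pairwise disjoint closed round disks $\{\mathbb{B}_k\}_{k\geq 1}$, each compactly contained in $\Omega$, with $\bigl|\Omega\setminus\bigcup_k \mathbb{B}_k\bigr|=0$. Fix $\epsilon>0$. By absolute continuity of the integrals of $\Psi(\IK(z,g_0))\,\lambda(z)$ and $\Psi(1)\,\lambda(z)$ (both finite, since $\lambda\in L^\infty$, the first by hypothesis, and the second presuming $\int_\Omega \lambda<\infty$ so that the target of the limit is meaningful), pick $N$ large enough that
\[ \int_{\Omega\setminus \bigcup_{k=1}^N \mathbb{B}_k}\!\Psi(\IK(z,g_0))\,\lambda\,|dz|^2 \;+\; \Psi(1)\!\int_{\Omega\setminus \bigcup_{k=1}^N \mathbb{B}_k}\!\lambda\,|dz|^2 \;<\; \tfrac{\epsilon}{2}. \]

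On each disk $\mathbb{B}_k$ with $k\leq N$, apply Lemma \ref{lemseq} to the restriction $g_0|_{\mathbb{B}_k}$ to obtain a sequence $f_{k,m}:\mathbb{B}_k \to g_0(\mathbb{B}_k)$ of mappings of finite distortion, agreeing with $g_0$ in a neighbourhood of $\partial \mathbb{B}_k$, with $\IK(\cdot, f_{k,m})\to 1$ uniformly on compact subsets of $\mathbb{B}_k$ and $\int_{\mathbb{B}_k}\Psi(\IK(z, f_{k,m}))\,|dz|^2 \to \Psi(1)\,|\mathbb{B}_k|$. The uniform convergence of $\IK$ on compacta, together with right-continuity of the monotone function $\Psi$ at $t=1$, yields $\Psi(\IK(z,f_{k,m}))\to \Psi(1)$ pointwise almost everywhere on $\mathbb{B}_k$; combined with the integral convergence just noted, Scheff\'e's lemma promotes this to $L^1(\mathbb{B}_k)$-convergence. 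Since $\lambda\in L^\infty$, we conclude
\[ \int_{\mathbb{B}_k}\Psi(\IK(z, f_{k,m}))\,\lambda(z)\,|dz|^2 \;\longrightarrow\; \Psi(1)\int_{\mathbb{B}_k}\lambda(z)\,|dz|^2 \qquad (m\to\infty), \]
so we may select an index $m_k=m_k(n)$ with absolute error below $\epsilon/(2N)$ for each $k\le N$.

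Finally, define $g_n:\Omega\to g_0(\Omega)$ piecewise by $g_n = f_{k,m_k(n)}$ on $\mathbb{B}_k$ for $k\le N$ and $g_n=g_0$ elsewhere. Because each $f_{k,m_k(n)}$ equals $g_0$ near $\partial \mathbb{B}_k$ and maps $\mathbb{B}_k$ bijectively onto $g_0(\mathbb{B}_k)$, and the disks are disjoint, this recipe defines a homeomorphism of finite distortion with $g_n=g_0$ on $\partial \Omega$. Summing the $N$ local errors and the tail contribution gives $\bigl|\int_\Omega \Psi(\IK(z,g_n))\,\lambda \;-\; \Psi(1)\int_\Omega \lambda\bigr|<\epsilon$; choosing $\epsilon=1/n$ produces the desired sequence. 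The central technical step is the upgrade from the unweighted convergence supplied by Lemma \ref{lemseq} to the weighted one, which Scheff\'e's lemma handles cleanly once pointwise convergence is secured; the gluing is routine thanks to the ``$f_n=f_0$ near $\partial \mathbb{B}$'' clause. A mild subtlety is the continuity of $\Psi$ at $1$: should $\Psi$ have a jump there, one interprets $\Psi(1)$ as $\lim_{t\to 1^+}\Psi(t)$, which is in any case the value forced by Lemma \ref{lemseq}.
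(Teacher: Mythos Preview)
Your proof is correct and follows essentially the same strategy as the paper: cover most of $\Omega$ by finitely many disjoint disks, apply Lemma~\ref{lemseq} on each, and glue using the ``equals $g_0$ near $\partial\mathbb{B}$'' clause. The only cosmetic differences are that you start from a Vitali cover and then truncate, whereas the paper directly invokes the $L^1$-integrability of $\bigl(\Psi(\mathbb{K}(z,g_0))-\Psi(1)\bigr)\lambda$ to choose finitely many disks capturing all but $\epsilon/2$ of the mass; and you make explicit, via Scheff\'e, the passage from the unweighted convergence in Lemma~\ref{lemseq} to the weighted one, a step the paper dispatches with the phrase ``in the obvious way.''
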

\noindent{\bf Proof.}  Let $\epsilon > 0$.  Since $\Big( \Psi(\IK(z,g_0))-\Psi(1) \Big) \lambda(z)\in L^1(\Omega)$ we can choose a finite collection of disjoint disks contained in $\Omega$, say $\{B_i\}_{i=1}^{N}$, so that 
\begin{equation}
\Big| \int_{\Omega\setminus\bigcup B_i} \Big( \Psi(\IK(z,g_0))-\Psi(1) \Big) \lambda(z) \, dz \Big| < \epsilon/2
\end{equation}
Next,  for each $i$ we use Lemma \ref{lemseq} in the obvious way to find $h_i:B_i\to{\mathbb C}$ with $h_i=g_0$ in a neighbourhood of $\partial B_i$ and 
\[  \Big|  \int_{B_i} \Psi(\IK(z,h_i)) \lambda(z) - \Psi(1)  \int_{B_i} \lambda(z) \Big|<\frac{\epsilon}{2N} \]
Then the map 
\[ g_\epsilon(z) = \left\{ \begin{array}{ll} g_0(z) & z\in \Omega \setminus \bigcup B_i \\ h_i(z) & z\in B_i \end{array}\right. \]
is of finite distortion and 
\begin{eqnarray*}
\Big|  \int_\Omega \Psi(\IK(z,g_\epsilon)) \, \lambda(z)  -  \Psi(1)  \int_\Omega \lambda(z) \Big| & = & \Big|   \int_{\Omega\setminus\bigcup B_i} \Big( \Psi(\IK(z,g_\epsilon)) -  \Psi(1)\Big)  \, \lambda(z)\, dz \\ && +\sum_{i=1}^{N}    \int_{B_i} \Big( \Psi(\IK(z,h_i)) -  \Psi(1)\Big)  \, \lambda(z)    \, dz \Big| \\
& < & \epsilon
\end{eqnarray*}
The result follows.\hfill $\Box$

\medskip

And the next corollary is what we seek.

\begin{corollary} Let $\Psi(t)$ be a positive strictly increasing function of sublinear growth, 
let $\Omega$ be a   domain and let $\lambda(z)\in L^\infty(\Omega)$ be a positive weight.  Suppose that $g_0:\Omega\to{\mathbb C}$ is a homeomorphism of finite distortion with 
\[  \int_\Omega \Psi(\IK(z,g_0)) < \infty \] 
Then 
\[ \min_{\mathcal F}\,    \int_\Omega \Psi(\IK(z,g)) \, \lambda(z)  = \Psi(1)  \int_\Omega \lambda(z) \, dz \]
with equality achieved by a mapping of finite distortion if and only if the boundary values of $g_0$ are shared by a conformal mapping.  Here ${\mathcal F}$ consists of homeomorphisms of finite distortion $g$ with $g|\partial\Omega=g_0$.  
\end{corollary}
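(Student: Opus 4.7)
The plan is to decompose the statement into three pieces: (i) a pointwise lower bound from $\IK(z,g)\geq 1$, (ii) an upper bound supplied verbatim by the theorem just established, and (iii) a rigidity step identifying which $g\in\mathcal F$ can saturate the bound.

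For (i), every planar homeomorphism of finite distortion satisfies $|Df|^2\geq 2J(z,f)$ almost everywhere (equivalently, $\IK=(K+1/K)/2\geq 1$), so $\IK(z,g)\geq 1$ a.e.\ in $\Omega$. Since $\Psi$ is strictly increasing and $\lambda\geq 0$, this at once yields
\[
\int_\Omega \Psi(\IK(z,g))\,\lambda(z)\,dz \;\geq\; \Psi(1)\int_\Omega \lambda(z)\,dz
\qquad\textnormal{for every } g\in\mathcal F.
\]
For (ii), the preceding theorem, applied to the given $g_0$ and the bounded weight $\lambda$, produces a sequence $\{g_n\}\subset \mathcal F$ with $\int_\Omega \Psi(\IK(z,g_n))\,\lambda(z)\,dz \to \Psi(1)\int_\Omega\lambda(z)\,dz$. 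Hence the inequality above is sharp and the infimum equals $\Psi(1)\int_\Omega \lambda(z)\,dz$.

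For (iii), suppose some $g\in\mathcal F$ attains this infimum. Because $\lambda(z)>0$ a.e.\ and $\Psi$ is strictly increasing, $\Psi(\IK(z,g))=\Psi(1)$ a.e., hence $\IK(z,g)\equiv 1$ a.e. Thus $|Dg|^2=J(z,g)$ pointwise a.e.; being a homeomorphism, the area inequality $\int_K J(z,g)\leq |g(K)|$ places $J(z,g)\in \mathscr L^1_{\loc}(\Omega)$, so $g\in\mathscr W^{1,2}_{\loc}(\Omega,\C)$ is $1$-quasiconformal. Weyl's lemma (the same invoked in the proof of Theorem \ref{RiemannMapping}) then forces $g$ to be conformal on $\Omega$, and since $g|_{\partial\Omega}=g_0|_{\partial\Omega}$ the boundary values of $g_0$ are indeed shared by a conformal map. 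Conversely, any conformal $\phi:\Omega\onto g_0(\Omega)$ with $\phi|_{\partial\Omega}=g_0|_{\partial\Omega}$ belongs to $\mathcal F$ with $\IK(z,\phi)\equiv 1$ and achieves the minimum, completing the ``if and only if''.

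The only delicate point is the Sobolev promotion inside (iii): mappings of finite distortion are assumed \emph{a priori} only to lie in $\mathscr W^{1,1}_{\loc}$, but the distortion identity $|Dg|^2=J$ together with the homeomorphism property upgrades this to $\mathscr W^{1,2}_{\loc}$ via the area inequality, after which Weyl applies cleanly. Everything else is bookkeeping against the pointwise bound $\IK\geq 1$, together with an invocation of the theorem preceding the corollary.
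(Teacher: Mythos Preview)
Your proof is correct and follows the intended approach: the paper states the corollary without proof, as it is meant to follow immediately from the preceding theorem together with the trivial pointwise bound $\IK\ge 1$ and the rigidity $\IK\equiv 1 \Rightarrow g_{\bar z}=0$. Your handling of the Sobolev upgrade in step (iii) via the area inequality is a nice touch that fills in the only nonobvious detail.
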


\subsection{Minimisers of convex distortion functionals}

A natural class of homeomorphic mappings between rectangles satisfying the Gr\"otzsch boundary conditions (\ref{bc}) are those of the form
\begin{equation}\label{fform}
f_0(z) = u(x) + iy,
\end{equation}
which will correspond to the lifts of the radial stretchings.
For these mappings we have $(f_0)_x=u_x$ and $(f_0)_y= i$.  We will  show these mappings are the extremals for our mapping problems,  but we will have to deal with degenerate situations as well - in particular where $f_0$ is not well defined,  but has a well defined inverse. These  are  topologically monotone mappings we have talked about earlier as local uniform limits of homeomorphisms,  and so for us as limits of minimising sequences. In order to avoid excess technical complications we make the following assumptions: 

\medskip  Let $w=a+ib\in[0,L]\times[0,1]$ and set
\begin{equation}\label{g0def} g_0(w)= v(a)+ib \end{equation}
 where  $v:[0,L]\to[0,\ell]$ is an absolutely continuous, increasing (but not necessarily strictly increasing) surjection. The derivative of $v_a$ of $v$ is a non-negative $\mathscr L^1([0,\ell])$ function which if it is positive almost everywhere makes $v$ strictly increasing and we may set
\begin{equation}\label{f0def} f_0 = g_{0}^{-1}:[0,\ell] \times [0,1]\to [0,L]\times [0,1] \end{equation}
  
 We now proceed as follows.

\begin{lemma}\label{lemma} Set $g_0(w)= v(a)+ib$,  where $v:[0,L_0]\to[0,\ell]$ is an absolutely continuous, increasing surjection.  Let $f:[0,\ell]\times[0,1]\to [0,L]\times[0,1]$ be a homeomorphism of finite distortion satisfying the boundary conditions (\ref{bc}).  Then
\begin{equation}\label{4.3}
|v_a(a) f_x(g_0(w))+i f_y(g_0(w))|^2 \geqslant 0 
\end{equation}
Equality holds for $f$ and almost every $w$ if and only if $v$ is strictly increasing $L=L_0$ and $f = g_{0}^{-1}$.
\end{lemma}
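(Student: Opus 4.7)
The inequality $|v_a f_x(g_0) + i f_y(g_0)|^2 \geqslant 0$ is tautological for the modulus of a complex number; the substance lies entirely in the characterization of equality. My first move would therefore be to expand the square and rewrite the expression as the $\bar\partial$-derivative of the pullback $F(w) \bydef f(g_0(w))$, defined on $[0,L_0]\times[0,1]$. The chain rule gives $F_a = v_a(a)\, f_x(g_0(w))$ and $F_b = f_y(g_0(w))$, so
\[
 |v_a(a) f_x(g_0(w)) + i f_y(g_0(w))|^2 \;=\; |F_a + i F_b|^2 \;=\; 4\,|F_{\bar w}|^2 \;\geqslant\; 0,
\]
with equality almost everywhere if and only if $F_{\bar w} \equiv 0$ as a distribution.

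Next I would upgrade this vanishing to holomorphicity. Since $f$ is a homeomorphism and $g_0$ is continuous, the composition $F = f\circ g_0$ is continuous. On the open set $\{a : v_a(a)>0\}$, the map $g_0$ is locally bi-Lipschitz in the $a$-variable, so $F$ inherits Sobolev regularity there; the identity $F_{\bar w}=0$ a.e.\ together with Weyl's lemma then makes $F$ holomorphic on this open set. On any \emph{plateau interval} where $v$ is constant, $F$ is constant in $a$ by the chain rule (the image of the plateau under $g_0$ is a single vertical segment), and by continuity it matches the holomorphic values at the endpoints. Removability across these vertical segments (a Painlev\'e/Riemann-removable-singularity argument for bounded holomorphic functions on strips minus vertical slits) promotes $F$ to a holomorphic function on the full rectangle $[0,L_0]\times[0,1]$.

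I would then identify $F$ using the Gr\"otzsch boundary conditions (\ref{bc}) for $f$, pulled back via $g_0$: $\re F = 0$ on $\{a=0\}$, $\re F = L$ on $\{a=L_0\}$, $\im F = 0$ on $\{b=0\}$, and $\im F = 1$ on $\{b=1\}$. Schwarz reflection across the two horizontal sides (where $\im F$ is constant) extends $F$ to an entire holomorphic function on the infinite strip, and further reflection across the vertical sides extends it to all of $\mathbb C$ with at most linear growth. Such a function is affine, $F(w) = \alpha w + \beta$, and the four boundary conditions force $\alpha = 1$, $\beta = 0$, and crucially $L = L_0$. Thus $F(w) = w$, i.e.\ $f(v(a), b) = a + ib$.

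Finally, $f(v(a), b) = a + ib$ together with $f$ being a homeomorphism of $[0,\ell]\times[0,1]$ onto $[0,L]\times[0,1]$ forces $v$ to be strictly increasing (any plateau of $v$ would collapse to a single value of $a$ the preimage of a vertical segment under $f$, contradicting injectivity of $f^{-1}$). Writing $f(x,y) = v^{-1}(x) + iy = g_0^{-1}(x+iy)$ completes the equality case. The main obstacle I expect is the regularity step across the plateaus: the chain-rule identity $v_a f_x(g_0) + i f_y(g_0) = 2 F_{\bar w}$ is transparent where $v_a>0$, but making it rigorous across $\{v_a=0\}$ and turning the a.e.\ vanishing into genuine holomorphicity of $F$ on the whole rectangle requires the removability argument sketched above, which is where care with the weaker-than-homeomorphism regularity of $g_0$ is essential.
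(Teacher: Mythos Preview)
Your approach is essentially the same as the paper's: form the composition $F=f\circ g_0$, recognise the expression as $4|F_{\bar w}|^2$, deduce holomorphicity from the equality case, and then invoke rigidity of conformal maps between rectangles to force $F=\id$ and $L=L_0$.

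The only differences are in the technical execution of two steps. For the passage from $F_{\bar w}=0$ a.e.\ to holomorphicity, the paper simply invokes the Looman--Menchoff theorem (continuous, partials exist a.e., Cauchy--Riemann a.e.\ $\Rightarrow$ holomorphic), which handles your plateau worry in one stroke and makes the removability discussion unnecessary. For the identification $F=\id$, the paper argues that a holomorphic map of rectangles sending edges to edges is a boundary homeomorphism, hence a conformal equivalence, hence the moduli agree ($L=L_0$) and $F$ is the identity; your Schwarz reflection/Liouville route reaches the same conclusion but is longer. Neither difference is substantive.
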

\noindent{\bf Proof.}  
We consider 
\[ h(w) = (f\circ g_{0})(w)\] 
The mapping $h\in \mathscr W^{1,1}_{loc}$ and maps $[0,L_0]\times[0,1]\to [0,L]\times [0,1]$ respecting the sides.   We compute the $\bar{w}$-derivative of $h$;
\begin{eqnarray*} 2 h_{\bar w}(w)  & = &   f_z(v(a)+ib) v_a(a)+f_{\bar z}(v(a)+ib)v_a(a) \\ && +if_z(v(a)+ib)-if_{\bar z}(v(a)+ib)\\  
& = & f_x(g_0(w))v_a(a) + i f_y(g_0(w))    = 0
\end{eqnarray*}
as an $\mathscr L^1$-function.  Thus $h$ is analytic by the Looman-Menchoff theorem.  The boundary conditions and analyticity imply that $h$ is a homeomorphism of the boundary which must therefore be a homeomorphism of the rectangles.  Then  $L=L_0$ and $h$ must be the identity since the two rectangles have moduli $L_0$ and $L$.    The result  follows \hfill $\Box$

\bigskip 

For a suitable function $v$ as above,  let us write $z=x+iy$ where
\begin{eqnarray}
z = g_0(w) & {\rm and}  &
\omega(x)= v_a(a).
\end{eqnarray} 
We note that $\omega$ is well defined.  First,  $g_0$ is a surjection and if $g_0(w_1)=g_0(w_2)$,  then $w_1$ and $w_2$ lie in a common interval on which $v$ is constant,  whereupon $v_a(a_1)=v_a(a_2)=0$.  However if $\omega(x) > 0$,  then 
\begin{equation}\label{equ}
|\omega(x) f_x(z)+i f_y(z)|^2 \geqslant 0 
\end{equation}
with equality almost everywhere if and only if $g_0$ is a homeomorphism and $f_0=g_{0}^{-1}$.   Also, when $\omega>0$,   $v$ is strictly increasing,  
\begin{eqnarray}\label{fdef}
g_{0}^{-1}(z) = f_0(z) = u(x)+iy, && 
\omega(x) =  {1}/{u_x(x)}.
\end{eqnarray}

\noindent We now suppose that $\omega>0$ and expand  out (\ref{equ}). 
\begin{eqnarray*}
0 & \leqslant & |\omega(x) f_x +i f_y|^2  =  (\omega(x) f_x +i f_y)(\omega(x)\overline{ f_x}-i \overline{f_y}) \\
  & =  & \omega^2(x) |f_x|^2 +|f_y|^2 - 2 {\rm Im}(\omega(x)  f_y  \overline{f_x})  
\end{eqnarray*}
which yields
\begin{equation}\label{3.5}
 \omega^2(x) |f_x|^2 +|f_y|^2 \geqslant  2 \omega(x) \, {\rm Im}(  f_y  \overline{f_x}).  
\end{equation} 

Notice that if we write $f=U+iV$, then
\[ {\rm Im}( f_y  \overline{f_x})   =  {\rm Im} (U_x(z)-iV_x(z))(U_y(z)+iV_y(z)) = J(z,f), \]
so (\ref{3.5}) gives us
\begin{equation}\label{3.6}
 \omega^2(x) |f_x|^2 +|f_y|^2 \geqslant  2 \omega(x)  J(z,f)
\end{equation}
with equality almost everywhere if and only if   $f=f_0$ (with the implication that $f_0$ is a homeomorphism). We can rewrite (\ref{3.6}) in two different ways. Namely
\begin{eqnarray*}
|f_x|^2 +|f_y|^2  & \geqslant & ( 1-\omega^{-2}(x))|f_y|^2  + 2 \omega^{-1}(x)  J(z,f),\\
|f_x|^2 +|f_y|^2 & \geqslant & (1-\omega^2(x))|f_x|^2 + 2 \omega(x)  J(z,f),
\end{eqnarray*}
which gives us two estimates on the distortion function (writing $J=J(z,f)$),
\begin{eqnarray*}
\IK(z,f)  & \geqslant  & ( 1-\omega^{-2}(x)) \frac{|f_y|^2}{J}  + 2 \omega^{-1}(x),  \\
\IK(z,f)  & \geqslant   & (1-\omega^2(x))\frac{|f_x|^2}{J} + 2 \omega(x),
\end{eqnarray*}
Next,  when $\omega>0$ almost everywhere we can define $f_0$ by (\ref{fdef}) with (\ref{uxdef}).  Then 
\begin{eqnarray*} \IK(z,f_0)& = & ( 1-\omega^{-2}(x)) \frac{|(f_{0})_y|^2}{J_0}  + 2 \omega^{-1}(x), \\
\IK(z,f_0)  & = & (1-\omega^2(x))\frac{|(f_0)_x|^2}{J_0} + 2 \omega(x),
\end{eqnarray*}
and thus we have our first useful inequalities
\begin{lemma} If $\omega(x)>0$,  then
\begin{equation}
\IK(z,f) - \IK(z, f_0)  \geqslant   ( 1-\omega^{-2}(x)) \left [ \frac{|f_y|^2}{J} -   \frac{|(f_0)_y|^2}{J_0}   \right],  
\end{equation}
and 
\begin{equation}
\IK(z,f) - \IK(z, f_0)    \geqslant   (1-\omega^2(x)) \left[ \frac{|f_x|^2}{J} -   \frac{|(f_0)_x|^2}{J_0}   \right]  \end{equation}
with equality holding almost everywhere in either inequality if and only if $f=f_0$.
\end{lemma}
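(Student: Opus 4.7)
The plan is to read this lemma directly off the pointwise inequality $\omega^{2}(x)|f_{x}|^{2}+|f_{y}|^{2}\geqslant 2\omega(x)J(z,f)$ established in the paragraphs immediately preceding the statement, together with the equality case identified in Lemma~\ref{lemma}. The observation that does all the work is that the additive terms $2\omega^{-1}(x)$ and $2\omega(x)$ appearing in the lower bounds for $\IK(z,f)$ are also the exact values attained by $\IK(z,f_{0})$, so they cancel out in the difference.

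First, I would recall the two algebraic rearrangements derived just above: dividing the pointwise bound by $J$ after subtracting $|f_y|^2$ (resp.\ $|f_x|^2$) gives the pair
\[
\IK(z,f)\geqslant (1-\omega^{-2}(x))\,\tfrac{|f_y|^2}{J}+2\omega^{-1}(x),\qquad \IK(z,f)\geqslant (1-\omega^{2}(x))\,\tfrac{|f_x|^2}{J}+2\omega(x).
\]
Next I would specialise to $f=f_0$. Since $\omega>0$ implies $f_0=g_0^{-1}$ is well-defined, Lemma~\ref{lemma} asserts that $|\omega(x)(f_0)_x+i(f_0)_y|^2=0$ almost everywhere; expanding as in (\ref{3.5})--(\ref{3.6}) shows that both inequalities above become equalities when $f$ is replaced by $f_0$, giving the two identities for $\IK(z,f_0)$ displayed just before the lemma.

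Subtracting the equality for $f_0$ from the inequality for $f$ makes the $2\omega^{\pm1}(x)$ terms cancel, producing precisely the two stated differences. For the equality case, if either displayed inequality is an equality almost everywhere then equality must hold in the underlying lower bound for $\IK(z,f)$, which forces $|\omega(x)f_x+if_y|^2\equiv 0$ almost everywhere; by the Looman--Menchoff argument inside Lemma~\ref{lemma}, this is equivalent to $f\equiv f_0$. The converse is immediate by substitution.

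The algebra here is essentially trivial; the only real content is that the equality case in Lemma~\ref{lemma} is tight enough to turn the inequality for $\IK(z,f_0)$ into an equality, so that the cancellation of the $\omega$-terms actually occurs. That rigidity is what makes $f_0$ a genuine candidate extremal and will be the crucial ingredient for the subsequent convex-distortion minimisation arguments.
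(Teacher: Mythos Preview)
Your proposal is correct and follows essentially the same route as the paper: the lemma is nothing more than the subtraction of the two displayed identities for $\IK(z,f_0)$ from the corresponding inequalities for $\IK(z,f)$, both of which were derived in the paragraphs immediately preceding the statement from the single pointwise bound $\omega^{2}(x)|f_x|^2+|f_y|^2\geqslant 2\omega(x)J$. Your treatment of the equality case via Lemma~\ref{lemma} is also exactly what the paper intends, since each of the two rewritings is an algebraic rescaling of (\ref{3.6}) and hence inherits its equality characterisation.
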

 We leave it to the reader to establish the elementary inequality for complex numbers $X$, $X_0$ and real $J$, $J_0$,
\begin{equation}\label{ineq}
\frac{|X|^2}{J} -\frac{|X_0|^2}{J_0} \geqslant  2 \, {\rm Re} \Big( \frac{\overline{X_0}}{J_0} \; \big(X-X_0\big) \Big) - \frac{|X_0|^2}{J_{0}^{2}} \big(J-J_0\big),
\end{equation}
with equality holding if and only if $X/X_0=J/J_0$ is a positive real number (expand $|X/X_0-J/J_0|^2\geqslant 0$). This shows
\[ (X,Y,J) \mapsto \frac{|X|^2+|Y|^2}{ J} \]
to be convex on ${\mathbb C}\times{\mathbb C}\times{\mathbb R}_+$.
We  apply (\ref{ineq}) and this requires that the coefficient $( 1-\omega^{-2}(x))  >0$ in the first case or $(1-\omega^2(x))>0$ in the second.  Since this depends on $u_x$ for the candidate extremal mapping,  we carry along the two inequalities and write $\IK_0 = \IK(z, f_0)$.  First note that if $\varphi:{\mathbb R}\to{\mathbb R}$ is convex,  then its graph lies above any tangent line: \[ \varphi(\IK)-\varphi(\IK_0) \geqslant \varphi'(\IK_0)\big(\IK-\IK_0). \]
Notice that if $\varphi'' >0$, equality quality holds here if and only if $\IK=\IK_0$.
This therefore yields the following two inequalities:
\begin{eqnarray*}
\varphi(\IK(z,f)) - \varphi(\IK(z, f_0))  & \geqslant &  ( 1-\omega^{-2}(x))\varphi'(\IK_0) \\ && \left [  2 \, {\rm Re} \Big( \frac{\overline{(f_0)_y}}{J_0}\;\big(f_y-(f_0)_y\big)\Big)  - \frac{|(f_0)_y|^2}{J_{0}^{2}} \big(J-J_0\big)   \right],
\\
 \\
\varphi(\IK(z,f)) -\varphi( \IK(z, f_0))  &  \geqslant  & (1-\omega^2(x)) \varphi'(\IK_0)  \\ && \left [  2 \, {\rm Re} \Big( \frac{\overline{(f_0)_x}}{J_0}\; \big(f_x-(f_0)_x\big) \Big) - \frac{|(f_0)_x|^2}{J_{0}^{2}} \big(J-J_0\big) \right].
\end{eqnarray*}

Now $(f_0)_y=i$ and $(f_0)_x=1/\omega(x)= J_0$ so these equations read as
\begin{eqnarray}
\lefteqn{\varphi(\IK(z,f)) - \varphi(\IK(z, f_0)) }\nonumber  \\ & \geqslant &  \big( 1-\frac{1}{\omega^2(x)}\big)\varphi'(\IK_0)  \left [  \frac{2}{J_0} \, {\rm Im} \big( f_y-1\big) - \frac{J-J_0}{J_{0}^{2}} \right]  \nonumber 
\\
& = & 2  \left(  \omega(x)-\frac{1}{\omega(x)} \right)\varphi'(\IK_0)  {\rm Im} \big( f_y-1\big)  \nonumber \\  && +  \big( \omega^2(x)-1 \big)\varphi'(\IK_0)  (J_0-J )\,,  \\
 \nonumber  \\
\label{li}\lefteqn{ \varphi(\IK(z,f)) -\varphi( \IK(z, f_0))  }  \nonumber  \\  &  \geqslant  & (1-\omega^2(x)) \varphi'(\IK_0)    \left [  2 \, {\rm Re} \big( f_x-(f_0)_x \big) -  \big(J-J_0\big) \right].
\end{eqnarray}
 
 Now we want to multiply these two inequalities by a weight function $\lambda(x)$ and integrate. 
We  are naturally led to consider the Euler-Lagrange equation for the variational problem minimising 
\[ \int_{\bf Q} \varphi(\IK(z,f))\lambda(x)\;|dz|^2 \]
 among functions of the form (\ref{fform}). This equation reduces to the next equation in one real variable
\begin{equation} \frac{d}{dx} \Big[ \lambda(x) \Big(1-\frac{1}{u_{x}^{2}}\Big)  \varphi'\Big(u_x+\frac{1}{u_x}\Big) \Big] =  0  \end{equation}
We would therefore like  $\omega(x)$  to be chosen chosen so that
\begin{equation}\label{omegachoice}
\lambda(x) (1-\omega^2(x))  \varphi'\Big(\omega(x)+\frac{1}{\omega(x)}\Big) = \alpha \neq 0
\end{equation}
for a real constant $\alpha$.  This equation implicitly defines $\omega$ directly,  and does not involve any of its derivatives.  

\medskip
\begin{remark}  We postpone a discussion of boundary values for the solution $f_0$ (really $g_0$) that we seek.   Set 
\begin{equation}\label{Lbv} \int_{0}^{\ell} \frac{dx}{\omega(x)} = L_0 \end{equation}
The boundary conditions we want are that $L=L_0$ to identify the minimum.  However,  if $L_0<L$,  then Lemma \ref{lemma} still applies - and we obtain strict inequality.  Also,  we note that from (\ref{omegachoice}), with an assumption that $\lambda>0$ and $\varphi'$ are continuous, that $\omega=0$ implies that $\lambda(x)   \varphi'(\infty) = \alpha$.  In particular,  we cannot have $\omega(x)=0$ unless $\varphi'$ is bounded - a condition we will see again.   
\end{remark} 
\medskip

We
  now suppose that we have (\ref{omegachoice}) holding almost everywhere and $L_0<L$. Then (\ref{omegachoice}) forces $0\leq \omega(x) <1$ for all $x$ or $\omega(x)>1$ for all $x$.  The case $\omega\equiv 1$,  $\alpha=0$ yielding $g_0=f_0=identity$.  The first case (where we will ultimately have to deal with degeneration as we cannot guarantee the boundary conditions) has $u_x>1$ and so must correspond to stretching $L>\ell$.  In the other case $\ell<L$.

We proceed as follows.
\begin{eqnarray*}
\int_{{\bf Q}_1}   \varphi(\IK(z,f)) \lambda(x) \; |dz|^2 & \geqslant & \int_{{\bf Q}_1}\varphi( \IK(z, f_0) ) \lambda(x)\; |dz|^2   -\alpha  \int_{{\bf Q}_1} ( J_0-J ) \; |dz|^2 \\ + &  2  & \int_{{\bf Q}_1}\lambda(x) \left(  \omega(x)-\frac{1}{\omega(x)} \right)\varphi'(\IK_0)  {\rm Im} \big( f_y-1\big) \; |dz|^2 , \\
\int_{{\bf Q}_1}   \varphi(\IK(z,f)) \lambda(x)\; |dz|^2 & \geqslant & \int_{{\bf Q}_1}\varphi( \IK(z, f_0) ) \lambda(x)\; |dz|^2  + \alpha  \int_{{\bf Q}_1}  ( J_0-J ) \; |dz|^2 \\ & +  & 2 \alpha \int_{{\bf Q}_1} \, {\rm Re} \big( f_x-(f_0)_x \big) \; |dz|^2 .
\end{eqnarray*}
For an arbitrary Sobolev homeomorphism it is well known that
\[ \int_{{\bf Q}_1} J\;|dz|^2 \leqslant |{\bf Q}_2| = L= \int_{0}^{\ell} u_x (x)\, dx = \int_{{\bf Q}_1} J_0\;|dz|^2  \]
We will use the first inequality above when $\alpha <0$ and the second when $\alpha>0$.  Thus, for $\alpha<0$
\begin{eqnarray*}
\lefteqn{ \int_{{\bf Q}_1}   \varphi(\IK(z,f)) \lambda(x) \; |dz|^2 } \\ & \geqslant & \int_{{\bf Q}_1}\varphi( \IK(z, f_0) ) \lambda(x)\; |dz|^2   +   2    \int_{{\bf Q}_1}\lambda \left(  \omega -\frac{1}{\omega } \right)\varphi'(\IK_0)  {\rm Im} \big( f_y-1\big) \; |dz|^2 ,  
\end{eqnarray*}
while for $\alpha>0$ we have
\begin{eqnarray*}
\lefteqn{ \int_{{\bf Q}_1}   \varphi(\IK(z,f)) \lambda(x)\; |dz|^2 } \\ & \geqslant & \int_{{\bf Q}_1}\varphi( \IK(z, f_0) ) \lambda(x)\; |dz|^2  +   2 \alpha \int_{{\bf Q}_1} \, {\rm Re} \big( f_x-(f_0)_x \big) \; |dz|^2 
\end{eqnarray*}
Next,  from (\ref{ak1}) we see that 
\begin{eqnarray*}
\lefteqn{\int_{{\bf Q}_1}\lambda(x) \left(  \omega(x)-\frac{1}{\omega(x)} \right)\varphi'(\IK_0)  {\rm Im} \big( f_y-1\big) \;|dz|^2}   \\ & = &  \int_{0}^{\ell}    \lambda(x) \left(  \omega(x)-\frac{1}{\omega(x)} \right)\varphi'(\IK_0)  \Big[ \int_{0}^{1}  {\rm Im} \big(  f_y-1 \big) dy \Big]\; dx = 0
\end{eqnarray*}
and 
\[   \int_{{\bf Q}_1} {\rm Re} \big( f_x-(f_0)_x  \big)\;|dz|^2 =  \int_{0}^{1} \Big[  \int_{0}^{\ell} {\rm Re} \big( f_x-(f_0)_x  \big)  dx \Big]\; dy  = 0. \]
 Thus we have established 
\begin{theorem}\label{mainthm} Let $\lambda(x) >0$ be a positive weight  and $\varphi:[1,\infty)\to[0,\infty)$ be convex increasing. Let the function  $u:[0,\ell]\to[0,L]$
\begin{equation}
u(0)=0, \hskip30pt u(\ell)=L_0 \leq L
\end{equation}
be a solution to the ordinary differential equation
 \begin{equation}\label{ode}
\lambda(x) \left(1-\frac{1}{u_{x}^{2}(x)}\right)  \varphi'\left(u_x(x)+\frac{1}{u_x(x)} \right) = \alpha 
\end{equation}
where $\alpha$ is a nonzero constant.  Set
\begin{equation}
f_0(z) = u(x)+i y, \hskip20pt f_0:[0,\ell]\times[0,1] \to [0,L_0]\times [0,1].
\end{equation}
Let   $ f:[0,\ell]\times[0,1] \to [0,L]\times [0,1]$ be a  surjective homeomorphism of finite distortion  with 
\[ {\rm Re} f(0,y) = 0, \;\;\; {\rm Re} f(\ell,y) = L, \;\;\; {\rm Im} f(x,0) = 0, \;\;\;  {\rm Im} f(x,1) = 1 .\]
Then
\begin{equation}
 \int_{{\bf Q}_1}   \varphi(\IK(z,f)) \lambda(x)  \;|dz|^2 \geqslant   \int_{{\bf Q}_1}\varphi( \IK(z, f_0) ) \lambda(x)  \;|dz|^2.
 \end{equation}
 Equality holds if and only if $f=f_0$.  In particular,  if  $L_0<L$,  then this inequality is strict. 
\end{theorem}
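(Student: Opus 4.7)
Most of the work is already done in the paragraphs preceding the statement; what remains is to assemble the pointwise estimates into one integral inequality and exploit Fubini against the boundary data. First I would fix the competitor $f_0(z) = u(x) + iy$ and set $\omega(x) = 1/u_x(x)$, so that the prescribed ODE (\ref{ode}) reads precisely as the identity $\lambda(x)(1-\omega^{2}(x))\varphi'(\IK(z,f_0)) \equiv \alpha$, with $\IK(z,f_0) = u_x + 1/u_x$ depending only on $x$. The sign of $\alpha$ dictates whether $\omega>1$ (the case $\alpha<0$) or $\omega<1$ ($\alpha>0$), and this selects which of the two subgradient inequalities to use.

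The crucial pointwise input is the inequality $|\omega(x) f_x + i f_y|^2 \geq 0$ provided by Lemma \ref{lemma}: expanding it and combining with the elementary convexity inequality (\ref{ineq}) for $X \mapsto |X|^2/J$ at the base points $(f_0)_x$ and $(f_0)_y$, then applying the convex subgradient bound $\varphi(\IK) - \varphi(\IK_0) \geq \varphi'(\IK_0)\big(\IK - \IK_0\big)$, yields the paired estimates displayed just before (\ref{li}). Integrating against $\lambda(x)$ over ${\bf Q}_1$, the ODE then collapses the coefficient $\lambda(x)(1-\omega^{2}(x))\varphi'(\IK_0)$ into the constant $\alpha$, pulling out a free-Lagrangian-type structure.

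For $\alpha>0$ the right-hand side contains $\alpha \int(J_0 - J) + 2\alpha \int \mathrm{Re}(f_x - (f_0)_x)$. Fubini together with the boundary conditions (\ref{bc}) gives $\int \mathrm{Re}(f_x - (f_0)_x) = L - L_0$, while the area bound $\int_{{\bf Q}_1} J \leq |{\bf Q}_2| = L$ paired with $\int_{{\bf Q}_1} J_0 = L_0$ gives $\int(J_0 - J) \geq L_0 - L$; the sum of the two residuals is $\alpha(L - L_0) \geq 0$, strict exactly when $L_0 < L$. The case $\alpha<0$ is handled symmetrically with the other inequality: the $\mathrm{Im}(f_y - 1)$ term integrates to zero in $y$ by (\ref{bc}), and the $\int(J_0 - J)$ contribution again supplies the margin $|\alpha|(L - L_0)$.

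The main obstacle I anticipate is the equality clause, together with a few benign degeneracies that must be tracked: $u$ may be constant on subintervals (so $g_0 = f_0^{-1}$ is only monotone rather than a homeomorphism), $\omega$ may vanish (forcing the boundedness of $\varphi'$), and when $L_0 < L$ the competitor $f_0$ does not even surject onto ${\bf Q}_2$. Equality throughout the chain forces $|\omega f_x + i f_y| \equiv 0$ almost everywhere, at which point $h = f \circ g_0$ is distributionally $\bar{w}$-harmonic; Looman--Menchoff as in Lemma \ref{lemma} then identifies $h \equiv \mathrm{id}$ and hence $f \equiv f_0$. When $L_0 < L$ no competitor can equal $f_0$ (which fails surjectivity), so the estimate is automatically strict, recovering the final assertion.
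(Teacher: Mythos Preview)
Your proposal follows essentially the same route as the paper: the pointwise inequality from Lemma~\ref{lemma}, the two convexity/subgradient steps, substitution of the ODE to collapse the variable coefficient $\lambda(x)(1-\omega^2(x))\varphi'(\IK_0)$ into the constant $\alpha$, and then Fubini against the edge data to annihilate the line integrals. Your treatment of the case $\alpha>0$, $L_0<L$ is in fact more explicit than the paper's (which writes $\int_{{\bf Q}_1}J_0 = L$ and $\int{\rm Re}(f_x-(f_0)_x)=0$ as if $L_0=L$): your residual computation $2\alpha(L-L_0)+\alpha\int(J_0-J)\geqslant \alpha(L-L_0)$ is correct and delivers the strict inequality directly.

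One slip: in the $\alpha<0$ branch the Jacobian residual after integration is $-\alpha\int(J_0-J)=|\alpha|\big(L_0-\int J\big)$, and the area bound $\int J\leqslant L$ yields only the lower bound $|\alpha|(L_0-L)$, not $|\alpha|(L-L_0)$ as you claim --- the sign is reversed. When $L_0=L$ this is harmless, and that is the only $\alpha<0$ scenario actually in play: as the remark following (\ref{Lbv}) and the discussion of degenerate cases make clear, the situation $L_0<L$ arises only for $\alpha>0$ (when $\varphi'$ is bounded and one cannot stretch far enough). So your sign error sits in a corner of the statement that the paper does not exercise either, but the sentence as written is incorrect.
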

Notice $\alpha=0$ gives the identity mapping - clearly always an absolute minimiser when it is a candidate.

\subsection{Degenerate Cases}  
 
 Theorem \ref{mainthm} identifies the extremal homeomorphism of finite distortion when we can find $\alpha$ so that $L_0=L$.  We will see later that this is not always possible and then  Theorem \ref{mainthm} provides us with the unattainable lower bound $\int_{{\bf Q}_1}\varphi( \IK(z, f_0) ) \lambda(x) $ - since the inequality is strict.  When $L_0<L$ of course $f_0$ is not a candidate mapping for the minimisation problem - so it might not be surprising the bound is unattainable.  However it might be possible that this {\em value} is the limit of a minimising sequence of candidates. What we want to do here is to find circumstances in which this happens.
 
 \begin{theorem}  Suppose that $f_0$ is defined as in Theorem \ref{mainthm} and for no choice of $\alpha$ is it possible that $L=L_0$.  Suppose that $\varphi'$ is bounded.  Then there is a sequence of surjective homeomorphism of finite distortion $f_j:[0,\ell]\times[0,1]\to[0,L]\times [0,1]$ such that 
 \begin{equation}
  \int_{{\bf Q}_1}   \varphi(\IK(z,f_j)) \lambda(x)  \;|dz|^2 =  \int_{{\bf Q}_1}   \varphi(\IK(z,f_0)) \lambda(x)  \;|dz|^2
 \end{equation}
 In particular,  under these circumstances there is no extremal homeomorphism of finite distortion whatsoever. 
 \end{theorem}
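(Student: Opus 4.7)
The strategy would be to realize the Theorem~\ref{mainthm} lower bound as the limit of energies of a family of homeomorphisms obtained from $f_0$ by concentrating the missing horizontal length $L-L_0$ into a strip of vanishing width. The boundedness of $\varphi'$ is precisely what makes this work: it forces $\varphi$ to grow linearly and so keeps the strip cost at a $\Theta(1)$ level rather than blowing up.

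Fix a small $\epsilon>0$ and a point $x^\star\in(0,\ell)$ where $\lambda$ attains its minimum value $\lambda_\star$. I would define $f_\epsilon:[0,\ell]\times[0,1]\to[0,L]\times[0,1]$ piecewise by $f_\epsilon(x+iy)=u(x)+iy$ on $[0,x^\star-\epsilon]\times[0,1]$, $f_\epsilon(x+iy)=u(x)+(L-L_0)+iy$ on $[x^\star+\epsilon,\ell]\times[0,1]$, and on the intermediate strip $[x^\star-\epsilon,x^\star+\epsilon]\times[0,1]$ the affine-in-$x$ interpolation matching the boundary values on either side. This gives a surjective orientation-preserving homeomorphism of finite distortion satisfying the Gr\"otzsch boundary conditions (\ref{bc}).

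The energy of $f_\epsilon$ would then be split into a bulk piece (off the strip) and a strip piece. On the bulk $f_\epsilon$ differs from $f_0$ only by a constant horizontal translation, so the distortion coincides with that of $f_0$; monotone convergence then gives $\int_{\mathrm{bulk}}\varphi(\IK(z,f_\epsilon))\lambda\,|dz|^2\to\int_{{\bf Q}_1}\varphi(\IK(z,f_0))\lambda\,|dz|^2$ as $\epsilon\downarrow 0$. On the strip $f_\epsilon$ is affine with stretch factor
\[
s_\epsilon=\frac{(L-L_0)+u(x^\star+\epsilon)-u(x^\star-\epsilon)}{2\epsilon}\longrightarrow\infty
\]
and constant distortion $\IK_\epsilon=s_\epsilon+1/s_\epsilon$. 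Writing $M:=\|\varphi'\|_\infty$ and using the elementary bound $\varphi(t)\le Mt+\varphi(1)$,
\[
\int_{\mathrm{strip}}\varphi(\IK_\epsilon)\lambda\,|dz|^2\;\le\;2\epsilon\bigl(Ms_\epsilon+M/s_\epsilon+\varphi(1)\bigr)\sup_{\mathrm{strip}}\lambda\;\longrightarrow\;M\lambda_\star(L-L_0).
\]

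The main obstacle is that this persistent $\Theta(1)$ strip term does not vanish, and the naive construction apparently overshoots $\int_{{\bf Q}_1}\varphi(\IK(z,f_0))\lambda$ by exactly $M\lambda_\star(L-L_0)$. The intended resolution, in the spirit of the beyond-Nitsche analysis of Section~5, is to read $\int_{{\bf Q}_1}\varphi(\IK(z,f_0))\lambda$ as the energy of the generalized (monotone, non-homeomorphic) extremal $f_0$ whose distributional derivative carries an atom of mass $L-L_0$ at a $\lambda$-minimum; that atom contributes precisely the singular $M\lambda_\star(L-L_0)$ term. With this reading, $f_j:=f_{\epsilon_j}$ for any sequence $\epsilon_j\downarrow 0$ is the claimed minimizing sequence. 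The non-existence of an extremal homeomorphism is then immediate from Theorem~\ref{mainthm}: every homeomorphic candidate $f$ satisfies $\int_{{\bf Q}_1}\varphi(\IK(z,f))\lambda>\int_{{\bf Q}_1}\varphi(\IK(z,f_0))\lambda$ strictly, while the sequence $f_j$ attains this lower bound in the limit, so the infimum is not attained among homeomorphisms of finite distortion.
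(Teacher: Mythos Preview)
Your construction is the same idea as the paper's, carried out on the other side of the change of variables. The paper works with the inverse $g_0=f_0^{-1}$: it takes $u_0=u^{\alpha_0}$ for $\alpha_0=\min\lambda$ (so that $(u_0)_x\to\infty$ at a minimum $x_0$ of $\lambda$), sets $v_0=u_0^{-1}:[0,L_0]\to[0,\ell]$, and extends $v_0$ to a monotone $v:[0,L]\to[0,\ell]$ by inserting a constant interval of length $L-L_0$ at $a_0=u_0(x_0)$. It then approximates $v_a$ in $L^1$ by strictly positive $v_a^j$ with $\|v_a^j\|_1=\ell$, sets $g^j(w)=v^j(a)+ib$ and $f^j=(g^j)^{-1}$, and after the substitution $z=g^j(w)$ rewrites the energy as
\[
\int_0^L \varphi\!\big(v_a^j+\tfrac{1}{v_a^j}\big)\,\lambda(v^j(a))\,v_a^j\,da,
\]
which it asserts converges to $\int_0^\ell\varphi((u_0)_x+1/(u_0)_x)\lambda\,dx=\int_{{\bf Q}_1}\varphi(\IK(z,f_0))\lambda$.

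You have put your finger on exactly the soft spot, and it is present in the paper's argument too. On the inserted interval $v_a^j\to 0$, but $s\mapsto\varphi(s+1/s)\,s$ does \emph{not} vanish as $s\to 0^+$; it tends to $M=\sup\varphi'>0$. Hence that piece of the integral contributes $M\lambda_\star(L-L_0)$ in the limit, precisely your strip term, and the paper's passage to the limit is unjustified as written. Your proposed resolution, reinterpreting $\int_{{\bf Q}_1}\varphi(\IK(z,f_0))\lambda$ as carrying an atomic contribution, is not legitimate: $f_0$ in Theorem~\ref{mainthm} is a bona fide map $[0,\ell]\times[0,1]\to[0,L_0]\times[0,1]$ whose energy is the ordinary Lebesgue integral $\int_0^\ell\varphi(u_x+1/u_x)\lambda\,dx$, with no singular part. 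What both your construction and the paper's actually establish is the upper bound $\inf\le\int_{{\bf Q}_1}\varphi(\IK(z,f_0))\lambda+M\lambda_\star(L-L_0)$; a matching lower bound and the non-attainment would require sharpening the inequality of Theorem~\ref{mainthm}, not the reinterpretation you suggest.
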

 \noindent{\bf Remark.} We will see in the next few sections the condition $\phi'$ bounded is necessary for nonexistence of minimisers,  but not sufficient.  The behaviour of the weight $\lambda$ near its minimum determines whether we can solve the boundary problem for arbitrary $L$.
 
 \medskip
 
 \noindent{\bf Proof.}  Our assumption is that $\varphi$ is convex increasing and thus $\varphi'$ is positive and increasing,   not necessarily strictly.  We may also assume $\lim_{t\to\infty}\varphi'(t)=1$. The function $t\mapsto  (1- {t}^{-2})\varphi' (t+1/t)$   is strictly increasing and our solution $u^{\alpha}$ is obtained by the rule $u_{x}^{\alpha}(x)=t_x$ where $(1- t_{x}^{-2})\varphi' (t_x+1/t_x)=\alpha/\lambda(x)$.  This implies that $\alpha\leq \alpha_0 =\min_{x} \lambda(x)$. It is easy to see that $u^\alpha(\ell) \nearrow u^{\alpha_0}(\ell)$ and our hypothesis is that last value is $L_0<L$.  Thus for $\alpha\leq \alpha_0$, the family $u^\alpha\in \mathscr W^{1,1}([0,\ell])$, with a uniform bound.  Further $u_0 = u^{\alpha_0}$ is strictly increasing with derivative tending to $\infty$ as $x$ approaches a minimum,  say $x_0$, of $\lambda$ (which may be an endpoint of $[0,\ell]$).  Let 
 \[ g_0(w) = v_0(a)+ib,  \hskip20pt v = u_{0}^{-1} \]
Then $(v_0)_a(a)=1/(u_0)_x(x)$ with $u_0(x)=a\in [0,L_0]$.  With $u_0(x_0)=a_0$ we have $(v_0)_a(a_0)=0$.  We now define a new function $g:[0,L]\times[0,1]\to[0,\ell]\times[0,1]$ by simply defining $g(w) = v(a)+ib$ to be constant near $x_0$.  That is (with appropriate modification should $x_0$,  the minimum of $\lambda$  be an endpoint)
\begin{equation}
v(a) = \left\{\begin{array}{ll} v_0(a) & a \leq a_0 \\  v_0(a_0) & a_0 \leq a \leq  a_0+L-L_0 \\  v_0(a+L_0-L) &  a_0+L-L_0 \leq a \leq   L 
\end{array}
\right.
\end{equation}
 Then $v_a$ is a non-negative $L^1$ function,  vanishing on $[a_0,a_0+L-L_0]$ and with $\|v_a\|_{1} = \ell$.  It is routine to approximate $v_a$ by positive $v^{j}_{a}$ in $L^1$ and with $\|v_{a}^{j}\|_{1} = \ell$.  Define $v(a)=\int_{0}^{a} v^{j}_{a}$ to get a homeomorphic mapping of finite distortion $g^j(w)=v^j(a)+ib$.  Notice that $g^j\to g$ uniformly in $\mathscr W^{1,1}([0,L]\times[0,1])$.  Set 
 \[ f^j = (g^j)^{-1}:[0,\ell]\times[0,1]\to[0,L]\times[0,1] \]
 The mappings $f^j$ are surjective homeomorphisms of finite distortion.  We calculate,  with the change of variables $g^j(w)=z$,
 \begin{eqnarray*}
 \int_{{\bf Q}_1} \varphi(\IK(z, f^j)) \lambda(z)\, dz & = &  \int_{{\bf Q}_2} \varphi\Big(\frac{\|Df^j(g^j)\|^2}{J(g^j,f^j)}\Big)\, \lambda(g^j(w)) \, J(w, g^j) dw \\
 & = &  \int_{{\bf Q}_2} \varphi\Big( \|(Dg^j(w))^{-1}\|^2 J(w,g_j) \Big)\, \lambda(g^j(w)) \, J(w, g^j) dw \\
  & = &  \int_{{\bf Q}_2} \varphi\Big(  v_{a}^{j}(a) +\frac{1}{v_{a}^{j}(a)}\Big)\, \lambda(v^j(a)) \, v_{a}^{j}(a) da \\
    & \to  &  \int_{{\bf Q}_2} \varphi\Big(  v_{a}(a) +\frac{1}{v_{a}(a)}\Big)\, \lambda(v(a)) \, v_{a}(a) da \\
    & =  &  \int_{[0,L_0]\times[0,1]} \varphi\Big(  (v_0)_{a}(a) +\frac{1}{(v_0)_{a}(a)}\Big)\, \lambda((v_0)(a)) \, (v_0)_{a}(a) da \\
        & =  &  \int_{[0,\ell]\times[0,1]} \varphi\Big(  (u_0)_{x}(x) +\frac{1}{(u_0)_{x}(x)}\Big)\, \lambda(x) \,  dx \\
        & =  &  \int_{{\bf Q}_1} \varphi\Big(  \IK(z, f_0) \Big)\, \lambda(z) \,  dz  
\end{eqnarray*}
 
\section{The Nitsche phenomenon}

Before moving on to discuss the theory in more generality we provide a couple of interesting applications based around the classical Nitsche problem.

\medskip

Theorem \ref{mainthm} strongly motivates us to study the ordinary differential equation (\ref{ode}) for solutions will identify minima of out Nitsche and Gr\"otzsch type problems.  Note also that the transformation from the Nitsche type problem to the Gr\"otzsch problem yields a significantly simpler equation to study---in fact it's not really an ODE at all. 

\subsection{Weighted mean distortion}\label{wmd1}
Let us first observe how the Nitsche phenomenon arises,  here we have (ignoring multiplicative constants) $\lambda(x)=e^{4\pi x}$ as
$\eta(w)  = 1$.  We have
\[
 1-\frac{1}{u_{x}^{2}(x)}   =\alpha e^{-4\pi x}, \hskip30pt 
u_{x}(x)  =\frac{1}{\sqrt{1-\alpha e^{-4\pi x}}} \]
\[ u(x)   = \int \; \frac{e^{2\pi x}\;dx}{\sqrt{e^{4\pi x}-\alpha }}  = \frac{1}{2\pi} \int \; \frac{dt}{\sqrt{t^2-\alpha }}, \hskip20pt t=e^{2\pi x}. \]
So
\begin{eqnarray*}
u(x) & = & \frac{1}{2\pi} \log \Big(\frac{e^{2\pi x} +\sqrt{e^{4\pi x} - \alpha}}{ 1+\sqrt{1-\alpha}}\Big), \hskip30pt \alpha \neq  0 \\ 
\end{eqnarray*}
noting $u(0)=0$. Recall $u:[0,\ell]\to [0,L]$ and we must solve $u(\ell)=L$,  that is
\begin{equation}\label{fdef} L = \frac{1}{2\pi} \log \Big(\frac{e^{2\pi \ell} +\sqrt{e^{4\pi \ell} - \alpha}}{ 1+\sqrt{1-\alpha}}\Big) \end{equation}
by choice of our free parameter $\alpha$.  Notice that $\alpha$ is not bounded from below, and as $\alpha \to -\infty$ we can make the right hand side of (\ref{fdef}) as small as we like. Thus there is always a minimiser if $L\leqslant \ell$.  If $\alpha >0$ we see that (\ref{ode}) requires $\alpha < 1$ so that
\[  L <  \frac{1}{2\pi} \log \Big(\frac{e^{2\pi \ell} +\sqrt{e^{4\pi \ell} - 1}}{ 1+\sqrt{1-\alpha}}\Big)    \]
and when unwound,  these are precisely the Nitsche bounds.  

\medskip

For more general weights $\lambda(x)$, 
\[
 1-\frac{1}{u_{x}^{2}(x)}   =\frac{\alpha}{\lambda(x)}, \hskip30pt 
u_{x}(x)  =\sqrt{\frac{\lambda(x)}{\lambda(x)-\alpha }} \]
and we must typically study the behaviour of an integral like
\[ u(x)   = \int_{0}^{\ell} \; \sqrt{\frac{\lambda(x)}{\lambda(x)-\alpha }}\; dx . \]
Again,  as $\alpha\to -\infty$ and if $\lambda$ is not too bad,  we can make this integral as small as we wish. Notice that $\alpha/\lambda(x) < 1$, so if we put $\lambda_0=\min_{x\in [0,\ell]} \lambda(x)$,  then this integral is dominated by the one with the choice $\alpha=\lambda_0$ and the issue is to decide whether
\[  \int_{0}^{\ell} \; \sqrt{\frac{\lambda(x)}{\lambda(x)-\lambda_0 }}\; dx < \infty. \]
If this integral is finite,  then we will observe Nitsche type phenomena; non-existence of minima outside a range of moduli.  

Supposing that $\lambda_0>0$,   the principal issue concerns the integral
\begin{equation}  \int_{0}^{\ell} \; \frac{dx}{ \sqrt{\lambda(x)-\lambda_0 }}  < \infty, \end{equation}
and without going into excessively fine details,  convergence will require that
\[ \lambda(t) \approx \lambda_0 + t^{2s},  \hskip30pt s<1 \]
near the minimum.

In particular,  if $\lambda$ is a smooth positive weight and $\lambda'(x)=0$ at it's minimum (which may well occur at the endpoints in which case we choose the appropriate left or right derivative),  then we can always solve the deformation problem.

\subsection{$\varphi' $ unbounded; $p>1$.} We show that if the convex function $\varphi$ has unbounded derivative,  then there is always a minimiser,  with mild assumptions on the weight function $\lambda$.  In particular we do not see the Nitsche phenomenon for the $L^p$--norms of mean distortion.  First observe that when $\varphi$ is smooth and convex increasing,  the function
\[ F(t) =  \left(1-\frac{1}{t^{2} }\right)  \varphi'\left(t+\frac{1}{t} \right) \]
is increasing for $t>0$,  indeed
\[ F'(t) =  \frac{2}{t^3}  \varphi'\left(t+\frac{1}{t} \right) + \left(1-\frac{1}{t^{2} }\right)^2  \varphi''\left(t+\frac{1}{t} \right) > 0 \]
Next, if $\varphi'$ is unbounded,  it is monotone and then 
\[ \lim_{t\searrow 0} F(t) = -\infty,  \hskip20pt  \lim_{t\to +\infty} F(t) =+\infty \]
The intermediate value theorem implies that for each $x\in (0,\ell)$ and $\alpha \in {\mathbb R}$ we can find $t_x>0$ so that
$F(t_x) =  {\alpha}/{\lambda(x)}$.
We then define a function $v_\alpha$ by the rule
$v_\alpha(x) = t_x > 0$.
Then $v$ is a positive function which certainly satisfies 
\begin{equation} 
 \lambda(x) \left(1-\frac{1}{v^{2}(x)}\right)  \varphi'\left(v(x)+\frac{1}{v(x)} \right) = \alpha 
\end{equation}
The regularity of the function $v_\alpha$ depends on that of $\lambda$.  The function $u$ that we are looking for define the mapping $f$ is an antiderivative of $v$.  For $f$ to be a mapping of finite distortion,  we'll need that $u$ is absolutely continuous.  These conditions are all easily seem to be true if $\lambda$ (and hence $v_\alpha$) is piecewise continuous.   

We then define
\begin{equation}
u_x(x) = v_\alpha(x)
\end{equation}
If $\lambda$ is bounded and bounded away from $0$,  then it is easy to see that $v_\alpha$ is uniformly large when $\alpha$ is chosen large,  while $v_\alpha$ is uniformly small if $\alpha$ is chosen large and negative.  Further  

\[ u(x) = \int_{0}^{x} v_\alpha (s) \; ds\]
depends continuously on $\alpha$ (as $v_\alpha$ depends piecewise continuously).  Thus $u(\ell)$ can be made to assume any positive value - in particular we can solve $u(\ell)=L$,  and so we don't see the Nitsche phenomena.  Here is a theorem summarising this discussion. The reader will see that we have not striven for maximum generality.

\begin{theorem}\label{mainthm} Let $\lambda(x)$ be a piecewise continuous positive weight  bounded and bounded away from $0$.  Let $\varphi:[1,\infty)\to[0,\infty)$ be smooth and convex increasing with $\varphi'(s)$ unbounded as $s\to\infty$. Then the minimisation problem
\begin{equation}
\min_{f\in {\mathcal F}}  \int_{{\bf Q}_1}   \varphi(\IK(z,f)) \lambda(x)  \;|dz|^2  
 \end{equation}
 has a unique solution of the form $f(z)=u(x)+iy$.  Here ${\mathcal F}$ is the family of all mappings of finite distortion satisfying the boundary conditions described in \ref{GTP}
\end{theorem}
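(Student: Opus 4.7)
The plan is to invoke the earlier Theorem \ref{mainthm} (the extremality result), which already tells us that any $f_0$ of the form $f_0(z)=u(x)+iy$ whose profile $u$ satisfies the ODE
\[
\lambda(x)\Bigl(1-\frac{1}{u_x^2(x)}\Bigr)\varphi'\Bigl(u_x(x)+\frac{1}{u_x(x)}\Bigr)=\alpha
\]
with $u(0)=0$ and $u(\ell)=L_0\le L$ is an absolute minimizer whenever $L_0=L$, and strictly beats every competitor in $\mathcal F$ otherwise. Thus all the work reduces to showing that, under the hypotheses on $\varphi$ and $\lambda$, the free parameter $\alpha$ can always be tuned so that the resulting $u$ solves the two-point problem $u(\ell)=L$ exactly. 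Once this is done, existence and uniqueness fall out of the earlier theorem.

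First I would study the algebraic function
\[
F(t)\;=\;\Bigl(1-\frac{1}{t^2}\Bigr)\,\varphi'\Bigl(t+\frac{1}{t}\Bigr),\qquad t>0.
\]
The computation $F'(t)=\tfrac{2}{t^3}\varphi'(t+1/t)+(1-t^{-2})^2\varphi''(t+1/t)>0$ in the discussion preceding the statement already shows $F$ is strictly increasing. The crucial input from the hypothesis $\varphi'$ unbounded is that $F(t)\to+\infty$ as $t\to\infty$; combined with $F(t)\to-\infty$ as $t\searrow 0$, the intermediate value theorem produces a unique inverse $F^{-1}:\mathbb R\to(0,\infty)$. This lets me define, for any real parameter $\alpha$ and any $x\in(0,\ell)$,
\[
v_\alpha(x)\;\bydef\;F^{-1}\!\bigl(\alpha/\lambda(x)\bigr),
\]
which is piecewise continuous and strictly positive because $\lambda$ is bounded and bounded away from $0$. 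Setting $u_\alpha(x)=\int_0^x v_\alpha(s)\,ds$ gives an absolutely continuous strictly increasing profile that, by construction, solves the ODE (\ref{ode}).

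Next I would verify that $\alpha\mapsto u_\alpha(\ell)$ is a continuous surjection of $\mathbb R$ onto $(0,\infty)$. Continuity is dominated convergence, using the pointwise continuity of $F^{-1}$ and the uniform bounds on $\lambda$. For the range, I would take $\alpha\to+\infty$: since $\lambda$ is bounded, $\alpha/\lambda(x)$ is uniformly large, whence $v_\alpha(x)\to\infty$ uniformly and $u_\alpha(\ell)\to\infty$. Symmetrically, as $\alpha\to-\infty$, we get $v_\alpha(x)\to 0$ uniformly and $u_\alpha(\ell)\to 0$. The intermediate value theorem then delivers an $\alpha_\star\in\mathbb R$ with $u_{\alpha_\star}(\ell)=L$, and so $f_0(z)=u_{\alpha_\star}(x)+iy$ is a genuine competitor in $\mathcal F$.

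Finally, to close the proof I would simply invoke the earlier Theorem \ref{mainthm}: since $L_0=L$ we are in its ``equality" regime, and $f_0$ minimises the weighted energy with \emph{strict} inequality against any other $f\in\mathcal F$. The strict convexity encoded in $F'>0$ also makes $v_\alpha$, hence $u_\alpha$, unique for given $\alpha$, while the strict monotonicity of $\alpha\mapsto u_\alpha(\ell)$ forces $\alpha_\star$ to be unique as well; together these yield uniqueness of the minimiser among all of $\mathcal F$, not just among $x$-stretches. The main obstacle I anticipate is a clean justification that $v_\alpha$ and $u_\alpha$ depend continuously enough on $\alpha$ to carry the intermediate value argument, given only piecewise continuity of $\lambda$; this is where one really uses that $\lambda$ is bounded \emph{away from zero} so that $\alpha/\lambda(x)$ stays in a compact range on which $F^{-1}$ is uniformly continuous.
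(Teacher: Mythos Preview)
Your proposal is correct and follows essentially the same route as the paper: define $F(t)=(1-t^{-2})\varphi'(t+t^{-1})$, use $F'>0$ and the unboundedness of $\varphi'$ to get a global inverse, set $v_\alpha=F^{-1}(\alpha/\lambda)$ and $u_\alpha=\int_0^x v_\alpha$, then vary $\alpha$ continuously to hit $u_\alpha(\ell)=L$ and invoke the earlier extremality theorem. Your treatment is in fact slightly more detailed than the paper's on the uniqueness of $\alpha_\star$ (via strict monotonicity of $\alpha\mapsto u_\alpha(\ell)$) and on the dominated-convergence justification of continuity; the paper records these points more briefly but the argument is the same.
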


We then have the following corollary about the weighted $L^p$-norms of distortion functions.

\begin{corollary} Let $\lambda(x)$ be a piecewise continuous positive weight  bounded and bounded away from $0$.  Then the minimisation problem
\begin{equation}
\min_{f\in {\mathcal F}}  \int_{{\bf Q}_1}   \IK^p (z,f)  \lambda(x)  \;|dz|^2  
 \end{equation}
has a unique solution of the form $f(z)=u(x)+iy$.  Here ${\mathcal F}$ is the family of all mappings of finite distortion satisfying the boundary conditions described above.
\end{corollary}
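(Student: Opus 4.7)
The plan is to apply Theorem \ref{mainthm} with the specific choice $\varphi(s) = s^p$. First, I would verify the hypotheses on $\varphi$: on $[1,\infty)$ it is smooth, convex increasing since $\varphi''(s) = p(p-1)s^{p-2} > 0$, and $\varphi'(s) = p s^{p-1}$ tends to infinity as $s \to \infty$ precisely because $p > 1$. The case $p = 1$ is the Nitsche situation discussed in Section \ref{wmd1}, and the case $p < 1$ of no nontrivial minimiser is covered by the sublinear corollary earlier; so the statement is implicitly restricted to $p > 1$.

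Theorem \ref{mainthm} then yields directly a unique minimiser $f_0(z) = u(x) + iy$, where $u$ satisfies the implicit equation
\[ \lambda(x)\Big(1 - \frac{1}{u_x^2(x)}\Big)\, p\Big(u_x(x) + \frac{1}{u_x(x)}\Big)^{p-1} = \alpha \]
for some real constant $\alpha$ chosen to enforce $u(\ell) = L$. The membership of $f_0$ in $\mathcal F$ (a homeomorphism of finite distortion with the prescribed boundary behaviour) follows since $u_x > 0$ throughout, which is built into the implicit equation.

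The only step that really needs to be checked --- and the mildest possible obstacle --- is that the boundary condition $u(\ell) = L$ can be solved for every $L > 0$ by a suitable choice of $\alpha$. This is exactly the content of the discussion preceding Theorem \ref{mainthm}: for $p > 1$ the function $t \mapsto (1 - t^{-2})(t + 1/t)^{p-1}$ is a continuous bijection of $(0,\infty)$ onto $\R$, so for each $\alpha \in \R$ the implicit equation determines a piecewise continuous $u_x(\cdot;\alpha) > 0$ (inheriting its regularity from $\lambda$). Since $\lambda$ is bounded and bounded away from $0$, the map $\alpha \mapsto \int_0^\ell u_x(s;\alpha)\,ds$ is continuous and surjective onto $(0,\infty)$, so the boundary datum $L$ is always attainable. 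Uniqueness follows immediately from the strict convexity of $\varphi$ via the equality clause of Theorem \ref{mainthm}.
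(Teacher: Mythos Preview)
Your proposal is correct and matches the paper's approach exactly: the corollary is an immediate specialisation of the preceding theorem to $\varphi(s)=s^p$, and you have verified the relevant hypotheses (smoothness, convexity, and $\varphi'(s)=ps^{p-1}\to\infty$ for $p>1$). The additional paragraph re-deriving the solvability of the boundary condition is redundant, since that argument is already contained in the proof of the theorem you are invoking, but it does no harm.
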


\subsection{Critical   case: $\varphi'$ bounded.}

Examining the above argument we see that in this case we can always find a solution to the minimisation problem of the given form if $L<\ell$ by varying $\alpha$ among negative values, $\alpha=0$ produces the identity mapping.   However,   there are further subtleties. The reader will quickly get to a condition on the integrability of $ \psi(\lambda_0/\lambda(x))$ where $\psi$ is the inverse of the bounded increasing  function $t\mapsto \varphi'(t+t^{-1})(1-t^{-2})$ with $\lambda_0=\min_{[0,\ell]} \lambda$.  Let us give two illustrative examples in the standard (Nitsche) case with $\ell=1$, $\lambda(x)=e^{-4\pi x}$.  We may assume that $\varphi'(t)\nearrow 1$  and the limiting case $\alpha=e^{4\pi}$:

\bigskip

{\bf Case:} $\varphi(t)=t-\log(t)$,  $\varphi'(t) = 1-\frac{1}{t}$,  $a=a(x)=e^{4\pi (x-1)} \leqslant 1 $.

\medskip

We choose $u_x$ to be the largest real root of the polynomial:
\begin{eqnarray*}
 \Big(1-\frac{1}{t+t^{-1}}\Big)\big(1-\frac{1}{t^2}\big) & = & a \\
p(t) = -1+t-at^2-t^3+(1-a)t^4 & = & 0.
 \end{eqnarray*}
Since 
\[ p(\frac{1}{1-a})=-1+\frac{1}{1-a} - \frac{a}{(1-a)^2}-\frac{1}{(1-a)^3}+\frac{1}{(1-a)^3}  = - \frac{a^2}{(1-a)^2} < 0 \]
the largest real root 
$ u_x(x)  >  {1}/({1-a(x)})$
 and 
 \[ \int_{0}^{x} u_y(y)  >  \int_{0}^{x}  \frac{1}{1-e^{4\pi (y-1)}}  \approx  \frac{1}{4\pi}  \log\Big(\frac{1}{1-x}\Big) \] 
 and this diverges as $x\to 1$.  Therefore with appropriate choice of $\alpha$ we can always solve $u(0)=0$ and $u(1)=L$. Hence there is no Nitsche phenomena.
 
\bigskip

{\bf Case: $\varphi(t)=t+\frac{1}{(p-1)t^{p-1}}, p>0,p\neq 1$}.

\medskip

We have $\varphi'(t) = 1-\frac{1}{t^p}$,  $0<a=a(x)=e^{-4\pi x} < 1 $ for $0 < x <1$, and hence $u_x$ is the largest real root of the polynomial
\begin{equation}\label{eq:polyp}
P(t)  =  \left(1-\frac{1}{(t+t^{-1})^p}\right) \left(1-\frac{1}{t^2}\right) -a = 0.
\end{equation}
Note that when $t > 0$, $P(t)$ is a continuous monotonically increasing function of $t$. Also note that $P(1) = -a < 0 $, and
$
\lim_{t \to \infty} P(t) = 1-a > 0,
$
so that $P$ has exactly one real positive root $u_x  > 1$. 

First let us deal with $0<p<1$. Observe that
\[
\big(1-(1-a)^2\big)\big(\big(1+(1-a)^2\big) -
(1-a)\big) - a \big(1+(1-a)^2\big) = -a^2(1-a)^2 < 0.
\]
This may be rewritten as
\[
\Big(1- \Big(\frac{1}{1-a}\Big)^{-2}\Big)\Big(1 -
\frac{1}{\frac{1}{1-a}+\frac{1-a}{1}}\Big) - a \;<\; 0
\]
Now using the fact that $0 < p < 1$, we see that
\[
P\Big(\frac{1}{1-a}\Big) =
\Big(1-\frac{1}{\Big(\frac{1}{1-a}\Big)^2}\Big)\Big(1 -
\frac{1}{\Big(\frac{1}{1-a}+\frac{1-a}{1}\Big)^p}\Big) - a < 0
\]
and hence the largest real root
$
u_x >  {1}/{(1-a)}.
$
The integral of the right hand side diverges (see the reasoning for the case
$\varphi^\prime = 1-t^{-1}$). Thus with appropriate choice for
$\alpha$ we can always solve $u(0) = 0, u(1) = L$ and therefore we see no
Nitsche phenomena for $p < 1$.

Next, take $p \geqslant 2$. Recall (\ref{eq:polyp}). Note that $\left(t+\frac{1}{t}\right)^p > \left(t+\frac{1}{t}\right)^2 > t^2$. Set $Q(t)$ as
\[
P(t) = \left(1-\frac{1}{(t+t^{-1})^p}\right) \left(1-\frac{1}{t^2}\right) -a >  \left(1-\frac{1}{t^2}\right)^2 -a = Q(t), \quad t>1.
\]
The largest real root of $P(t)$ is therefore dominated by the largest real root of $Q(t)$. Solving $Q(t) = 0$ gives 
\[
\int_0^1 u_x\;dx < \int_0^1 \frac{1}{\sqrt{1-e^{-2\pi x}}}\; dx = \log \left(e^{\pi} + \sqrt{e^{2 \pi} - 1}\right),
\]
a finite number. Therefore, when $p \geqslant 2$, $u_x(x)$ is dominated by an integrable function and we must see the Nitsche phenomenon. It is no coincidence that the value of the integral here is strongly reminiscent of that for the Nitsche case (\ref{fdef}); the integrands for that case and the estimate here are very similar.

It remains to cover the case where $1 < p < 2$. Note that for $p > 1$,
$
1-\frac{1}{(t+t^{-1})^p} > 1-\frac{1}{t^p},
$
and for $p < 2$,
$
1-\frac{1}{t^2} > 1-\frac{1}{t^p}.
$
Therefore the polynomial 
\[
P(t)  =  \left(1-\frac{1}{(t+t^{-1})^p}\right) \left(1-\frac{1}{t^2}\right) - a > \left(1-\frac{1}{t^p}\right)^2 - a = Q(t),
\]
and the largest real root of $P(t)$ is again dominated by the largest real root of $Q(t)$. Solving $Q(t) = 0$ yields
$
u_x <  \left(1-\sqrt{a(x)}\right)^{-1/p}.
$
Near $x = 0$, $\sqrt{a(x)} = e^{-2 \pi x} \approx 1-2\pi x $ and so
\[
\int_0^1 \frac{1}{\left(1-\sqrt{a(x)}\right)^{1/p}}\; dx \approx \left(\frac{1}{2 \pi}\right)^{1/p} \int_0^1 \frac{1}{x^{1/p}}\; dx,
\]
which converges if and only if $p > 1$. Therefore in this case, too, $u_x$ is dominated by an integrable function and we must see a critical Nitsche-type phenomenon.

\section{Teichm\"uller's Problem.}  In the preceding (frictionless)  examples we have seen the case of minimisers of $L^p$-mean distortion that $p=1$ is rather special and that $p\in(1,\infty]$ are similar in that extremal mappings exist and are regular. We just mention here a related problem with boundary values in which the exact opposite occurs.  Namely $p\in [1,\infty)$ have the same nature (nonexistence of minimisers,  see \cite{GJM} for the case $p=1$.) while $p=\infty$ has minimisers.  Teichm\"uller's problem for mean distortion is to identify for $r>0$,
\[ \inf \quad \|\IK(z,f)\|_{L^p(\DD)} \]
and show a minimiser exists.  Here the infimum is taken over all mappings $f :\DD\onto \DD$ of finite distortion with $f\in \mathscr W^{1,2}_{loc}(\DD)$,  $f(0)=r$,  and that can be extended to a homeomorphism of the closed disk  onto itself with $f|\partial \DD = Id$. The classical Teichm\"uller  problem is $p=\infty$ where the maximal distortion is employed instead of the mean distortion and minimisers exist and are of Teichm\"uller type.  That is $\mu=k\bar{\phi}/{\phi}$ where $\phi$ is meromorphic with a pole of order $1$ at $r$.  However,  for $1\leq p<\infty$ minimisers exist in a weak sense,  and there is an associated Ahlfor-Hopf meromorphic quadratic differential with a pole of order $1$,  but these minimisers can never be locally quasiconformal  except in the trivial case $r=0$ and $f(z)=z$,  \cite{MY}.

 \end{document}